\newtheorem{theorem}{Theorem}[section]
\newtheorem{lemma}[theorem]{Lemma}
\newtheorem{proposition}[theorem]{Proposition}
\newtheorem{corollary}[theorem]{Corollary}
\newtheorem{definition}[theorem]{Definition}
\newtheorem{remark}[theorem]{Remark}
\newenvironment{proof}{%
\noindent{\it Proof.}\hskip 10 pt%
}{%
{\quad} \hfill$\Box$\\%
}
\def\og{\leavevmode\raise.3ex\hbox{$\scriptscriptstyle\langle\!\langle$~}}
\def\fg{\leavevmode\raise.3ex\hbox{~$\!\scriptscriptstyle\,\rangle\!\rangle$}}
\def\endzm{\\ \hspace*{\fill} $\square$\bigskip}
\DeclareMathOperator{\Coker}{Coker}
\DeclareMathOperator{\Dec}{Dec}
\DeclareMathOperator{\des}{des}
\renewcommand{\det}{\mathrm{det}}
\DeclareMathOperator{\Fil}{\mathbf{Fil}}
\DeclareMathOperator{\HN}{HN}
\DeclareMathOperator{\Hom}{Hom}
\DeclareMathOperator{\Id}{Id}
\DeclareMathOperator{\Image}{Im}
\DeclareMathOperator{\indic}{1\!\!1}
\DeclareMathOperator{\Ker}{Ker}
\DeclareMathOperator{\obj}{obj}
\DeclareMathOperator{\pr}{pr}
\DeclareMathOperator{\rang}{rk}
\DeclareMathOperator{\Spec}{Spec}
\DeclareMathOperator{\supp}{supp}
\begin{document}
\title{Harder-Narasimhan categories}

\author{Huayi {\sc Chen}}
\maketitle

\begin{abstract}
We propose a generalization of Quillen's exact category --- arithmetic exact category and we discuss conditions on such categories under which one can establish the notion of Harder-Narasimhan filtrations and Harder-Narsimhan polygons. Furthermore, we show the functoriality of Harder-Narasimhan filtrations (indexed by $\mathbb R$), which can not be stated in the classical setting of Harder and Narasimhan's formalism.
\end{abstract}

\section{Introduction}

\hskip\parindent The notion of {\it
Harder-Narasimhan flag\footnote{In most
literature this notion is known as
``Harder-Narasimhan filtration''. However,
the so-called ``Harder-Narasimhan
filtration'' is indexed by a finite set,
therefore is in fact a flag of the vector
bundle. Here we would like to reserve the
term ``Harder-Narasimhan filtration'' for
filtration indexed by $\mathbb R$, which we
shall define later in this article.} } (or
{\it canonical flag}) of a vector bundle on a
smooth projective curve over a field was
firstly introduced by Harder and Narasimhan
\cite{Harder-Nara} to study the cohomology
groups of moduli spaces of vector bundles on
curves. Let $C$ be a smooth projective curve
on a field $k$ and $E$ be a non-zero locally
free $\mathcal O_C$-module (i.e. vector
bundle) of finite type. Harder and Narasimhan
proved that there exists a flag
\[0=E_0\subsetneq E_1\subsetneq E_2\subsetneq\cdots
\subsetneq E_n=E\] of $E$ such that
\begin{enumerate}[1)]
\item each sub-quotient $E_{i}/E_{i-1}$
($i=1,\cdots,n$) is semistable\footnote{We
say that a non-zero locally free $\mathcal
O_C$-module of finite type $F$ is semistable
if for any non-zero sub-module $F_0$ of $F$
we have $\mu(F_0)\le\mu(F)$, where the {\it
slope} $\mu$ is by definition the quotient of
the degree by the rank.} in the sense of
Mumford,
\item we have the inequality of successive slopes
\[\mu_{\max}(E):=\mu(E_1/E_0)>\mu(E_2/E_1)>\cdots>\mu(E_n/E_{n-1})=:\mu_{\min}(E).\]
\end{enumerate} The Harder-Narasimhan
polygon of $E$ is the concave function on
$[0,\rang E]$, the graph of which is the
convex hull of points of coordinate $(\rang
F,\deg(F))$, where $F$ runs over all coherent
sub-$\mathcal O_C$-modules of $E$. Its
vertexes are of coordinate $(\rang
E_i,\deg(E_i))$. The avatar of the above
constructions in Arakelov geometry was
introduced by Stuhler \cite{Stuhler76} and
Grayson \cite{Grayson76}. Similar
constructions exist also in the theory of
filtered isocristals \cite{Faltings94}.
Classically, the canonical flags have no
functoriality. Notice that already the length
of canonical flags varies when the vector
bundle $E$ changes. However, as we shall show
in this article, if we take into account the
{\it minimal slopes} of non-zero subbundles
$E_i$ in the canonical flag, which coincide
with successive slopes, i.e.
$\mu_{\min}(E_i)=\mu(E_i/E_{i-1})$, we obtain
a filtration indexed by $\mathbb R$ which we
call {\it Harder-Narasimhan filtration}. Such
construction has the functoriality.

The category of vector bundles on a
projective variety is exact in the sense of
Quillen \cite{Quillen73}. However, it is not
the case for the category of Hermitian vector
bundles on a projective arithmetic variety,
or the category of vector spaces equipped
with a filtration. We shall propose a new
notion
--- arithmetic exact category --- which
generalizes simultaneously the three cases
above. Furthermore we shall discuss the
conditions on such categories under which we
can establish the notion of semistability and
furthermore the existence of
Harder-Narasimhan filtrations. We also show
how to associate to such a filtration a Borel
probability measure on $\mathbb R$ which is a
linear combination of Dirac measures. This
construction is an important tool to study
Harder-Narasimhan polygons in the author's
forthcoming work \cite{Chen_polygon}.

We point out that the categorical approach
for studying semistability problems has been
developed in various context
by different authors, among whom we would like to
cite Bridgeland
\cite{Bridgeland03}, Lafforgue
\cite{Lafforgue97} and Rudakov \cite{Rudakov97}.

This article is organized as follows. We introduce in the second section the formalism of filtrations in an arbitrary category. In the third section, we present the arithmetic exact categories which generalizes the notion of exact categories in the sense of Quillen. We also give several examples. The fourth section is devoted to the formalism of Harder and Narasimhan on an arithmetic exact category equipped with degree and rank functions, subject to certain conditions which we shall precise (such category will be called Harder-Narasimhan category in this article). In the fifth section, we associate to each arithmetic object in a Harder-Narasimhan category a filtration indexed by $\mathbb R$, and we establish the fonctoriality of this construction. We also explain how to apply this construction to the study of Harder-Narasimhan polygons. As an application, we give a criterion of Harder-Narasimhan categories when the underlying exact category is an Abelian category. The last section contains several examples of Harder-Narasimhan categories where the arithmetic objects are classical in p-adic representation theory, algebraic geometry and Arakelov geometry respectively.

\vspace{2mm} {\bf Acknowledgement} The
results in this article is the continuation
of part of the author's doctorial thesis
supervised by J.-B. Bost to whom the author
would like to express his gratitude. The
author is also thankful to A. Chambert-Loir, B. Keller
and C. Mourougane for remarks.

\section{Filtrations in a category}

\hskip\parindent In this section we shall
introduce the notion of filtrations in a
general category and their functorial
properties. Here we are rather interested in
left continuous filtrations. However, for the
sake of completeness, and for possible
applications elsewhere, we shall also discuss
the right continuous counterpart, which is
{\bf not} dual to the left continuous case.

We fix throughout this section a non-empty
totally ordered set $I$. Let $I^*$ be the
extension of $I$ by adding a minimal element
$-\infty$. The new totally ordered set $I^*$
can be viewed as a small category. Namely,
for any pair $(i,j)$ of objects in $I^*$,
$\Hom(i,j)$ is a one point set $\{u_{ij}\}$
if $i\ge j$, and is the empty set otherwise.
The composition of morphisms is defined in
the obvious way. Notice that $-\infty$ is the
final object of $I^*$. The subset $I$ of
$I^*$ can be viewed as a full subcategory of
$I^*$.

If $i\le j$ are two elements in $I^*$, we
shall use the expression $[i,j]$ \big(resp.
$]i,j[$, $[i,j[$, $]i,j]$\big) to denote the
set $\{k\in I^*\;|\;i\le k\le j\}$ \big(resp.
$\{k\in I^*\;|\;i<k<j\}$, $\{k\in
I^*\;|\;i\le k<j \}$, $\{k\in I^*\;|\;i<k\le
j\}$\big).

\begin{definition}
Let $\mathcal C$ be a category and $X$ be an
object of $\mathcal C$. We call {\it
$I$-filtration}\index{filtration} of $X$ in
$\mathcal C$ any functor $\mathcal
F:I^*\rightarrow\mathcal C$ such that
$\mathcal F(-\infty)=X$ and that, for any
morphism $\varphi$ in $I^*$, $\mathcal
F(\varphi)$ is a monomorphism.
\end{definition}

Let $\mathcal F$ and $\mathcal G$ be two
filtrations in $\mathcal C$. We call {\it
morphism of filtrations} from $\mathcal F$ to
$\mathcal G$ any natural transformation from
$\mathcal F$ to $\mathcal G$. All filtrations
in $\mathcal C$ and all morphisms of
filtrations form a category, denoted by
$\Fil^I(\mathcal C)$. It's a full subcategory
of the category of functors from $I^*$ to
$\mathcal C$.

Let $(X,Y)$ be a pair of objects in $\mathcal
C$, $\mathcal F$ be an $I$-filtration of $X$
and $\mathcal G$ be an $I$-filtration of $Y$.
We say that a morphism $f:X\rightarrow Y$ is
{\it compatible} with the filtrations
$(\mathcal F,\mathcal G)$ if there exists a
morphism of filtrations $F:\mathcal
F\rightarrow\mathcal G$ such that
$F(-\infty)=f$. If such morphism $F$ exists,
it is unique since all canonical morphisms
$\mathcal G(i)\rightarrow Y$ are monomorphic.

We say that a filtration $\mathcal F$ is {\it
exhaustive} if
$\underrightarrow{\lim}\mathcal F|_I$ exists
and if the morphism $\varinjlim\mathcal
F|_I\rightarrow X$ defined by the system
$(\mathcal F(u_{i,-\infty}):X_i\rightarrow
X)_{i\in I}$ is an isomorphism. We say that
$\mathcal F$ is {\it separated} if
$\varprojlim\mathcal F$ exists and is an
initial object in $\mathcal C$.

If $i$ is an element in $I$, we denote by
$I_{<i}$ (resp. $I_{>i}$) the subset of $I$
consisting of all elements strictly smaller
(resp. strictly greater) than $i$. We say
that $I$ is {\it left dense} (resp. {\it
right dense}) at $i$ if $I_{<i}$ (resp.
$I_{>i}$) is non-empty and if $\sup I_{<i}=i$
(resp. $\inf I_{>i}=i$). The subsets $I_{<i}$
and $I_{>i}$ can also be viewed as full
subcategories of $I^*$.

The following two easy propositions give
criteria for $I$ to be dense (left and right
respectively) at a point $i$ in $I$.
\begin{proposition}\label{Pro:critere locale de dense}
Let $i$ be an element of $I$. The following
conditions are equivalents:
\begin{enumerate}[1)]
\item $I$ is left dense at $i$;
\item $I_{<i}$ is non-empty and
the set $]j,i[$ is non-empty for any $j<i$;
\item $I_{<i}$ is non-empty and
the set $]j,i[$ is infinite for any $j<i$.
\end{enumerate}
\end{proposition}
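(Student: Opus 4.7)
The plan is to establish the cycle of implications $3)\Rightarrow 2)\Rightarrow 1)\Rightarrow 3)$, since $I_{<i}$ being non-empty appears in all three conditions and the content lies entirely in comparing the density at $i$ with the cardinalities of intervals $]j,i[$. The implication $3)\Rightarrow 2)$ is immediate because an infinite set is non-empty.

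For $2)\Rightarrow 1)$, I would argue that $i$ is always an upper bound of $I_{<i}$ in $I^*$, and then show it is the least upper bound by contradiction. Suppose some $m\in I^*$ with $m<i$ is an upper bound. If $m=-\infty$ then since $I_{<i}\neq\emptyset$ any element of $I_{<i}$ exceeds $m$, contradiction. Otherwise $m\in I$ with $m<i$, so by hypothesis $]m,i[$ is non-empty; any $k\in ]m,i[$ lies in $I_{<i}$ and satisfies $k>m$, contradicting that $m$ is an upper bound. Hence $\sup I_{<i}=i$.

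The main step is $1)\Rightarrow 3)$. Assume $\sup I_{<i}=i$ and fix $j<i$ in $I$. Suppose for contradiction that $]j,i[$ is finite. If $]j,i[=\emptyset$, then every $k\in I_{<i}$ with $k>j$ would contradict emptiness, so all elements of $I_{<i}$ are $\le j$, making $j$ an upper bound of $I_{<i}$ strictly below $i$, contradicting the supremum assumption. If $]j,i[$ is finite but non-empty, let $k$ be its maximum (which exists since $I$ is totally ordered and the set is finite). Then $k<i$ and any $l\in I_{<i}$ with $l>k$ would lie in $]j,i[$ (as $l>k>j$) yet exceed its maximum, which is absurd; so $k$ is an upper bound of $I_{<i}$ in $I^*$ with $k<i$, again contradicting $\sup I_{<i}=i$. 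Thus $]j,i[$ must be infinite.

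There is no real obstacle here: everything reduces to elementary manipulations in a totally ordered set, and the only subtlety is remembering that the supremum is taken in $I^*$ rather than $I$, which matters only in excluding the edge case $m=-\infty$ in the step $2)\Rightarrow 1)$.
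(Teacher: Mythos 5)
Your proof is correct. Note that the paper itself offers no argument for this statement (it is dismissed as one of ``two easy propositions''), so there is nothing to compare against; your cycle $3)\Rightarrow 2)\Rightarrow 1)\Rightarrow 3)$ is a complete and sound way to fill it in, with the two genuinely needed observations both present: interpreting $\sup I_{<i}=i$ as ``no $m<i$ in $I^*$ is an upper bound of $I_{<i}$'' (with the $m=-\infty$ case disposed of by $I_{<i}\neq\emptyset$), and, for $1)\Rightarrow 3)$, using the maximum of a finite non-empty $]j,i[$ to manufacture an upper bound of $I_{<i}$ strictly below $i$. The only reading you implicitly fix is that $j$ ranges over $I$ rather than $I^*$ in conditions 2) and 3); under the other reading the extra case $j=-\infty$ follows at once, since $]j_1,i[\subseteq I_{<i}$ for any $j_1\in I_{<i}$, so this is harmless.
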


\begin{proposition}
Let $i$ be an element of $I$. The following
conditions are equivalents:
\begin{enumerate}[1)]
\item $I$ is right dense at $i$;
\item $I_{>i}$
is non-empty and the set $]i,j[$ is non-empty
for any $j>i$;
\item $I_{>i}$ is non-empty and the set
$]i,j[$ is infinite for any $i>j$.
\end{enumerate}
\end{proposition}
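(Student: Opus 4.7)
The plan is to establish the cycle of implications $1) \Rightarrow 3) \Rightarrow 2) \Rightarrow 1)$, essentially mirroring the argument for the left-dense criterion stated just above (Proposition~\ref{Pro:critere locale de dense}). The implication $3) \Rightarrow 2)$ is immediate, since an infinite set is in particular non-empty.

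For $2) \Rightarrow 1)$, I would unwind the definition of infimum in a totally ordered set. The non-emptiness of $I_{>i}$ is common to both conditions, so the point is to show that $\inf I_{>i}=i$. The element $i$ is clearly a lower bound of $I_{>i}$; it remains to check that no strictly larger element of $I$ is a lower bound. Given $j\in I$ with $j>i$, condition~2) supplies some $k\in\, ]i,j[$, which lies in $I_{>i}$ and satisfies $k<j$, so $j$ fails to be a lower bound. Hence $i=\inf I_{>i}$, which is the defining property of right density at $i$.

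For $1) \Rightarrow 3)$, I would argue by contradiction. Suppose $I$ is right dense at $i$ and fix $j>i$ such that $]i,j[$ is finite. If $]i,j[$ is empty, then every element of $I_{>i}$ is $\ge j$, so $\inf I_{>i}\ge j>i$, contradicting right density. If $]i,j[$ is finite and non-empty, it has a minimum $k$ (every non-empty finite totally ordered set does), and every element of $I_{>i}$ either lies in $]i,j[$ (hence is $\ge k$) or is $\ge j$; in both cases it is bounded below by $\min(k,j)>i$, again contradicting $\inf I_{>i}=i$.

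The only mildly delicate step is the finite case analysis inside $1) \Rightarrow 3)$, but it hinges on the elementary fact that a finite totally ordered set has a minimum, so there is no real obstacle; the proposition is a symmetric counterpart of the preceding one and the argument transposes directly.
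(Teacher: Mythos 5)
Your proof is correct. The paper itself gives no argument for this statement (it and its left-dense companion are simply labelled as ``easy propositions''), so there is nothing to compare against; your cycle $1)\Rightarrow 3)\Rightarrow 2)\Rightarrow 1)$, using that a non-empty finite totally ordered set has a minimum, is exactly the expected routine verification, and you rightly read the misprint ``for any $i>j$'' in condition 3) as ``for any $j>i$''.
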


We say that a filtration $\mathcal F$ is {\it
left continuous} at $i\in I$ if $I$ is {\it
not} left dense at $i$ or if the projective
limit of the restriction of $\mathcal F$ on
$I_{<i}$  exists and the morphism $\mathcal
F(i)\rightarrow\varprojlim\mathcal
F|_{I_{<i}}$ defined by the system $(\mathcal
F(u_{ij}):\mathcal F(i)\rightarrow \mathcal
F(j))_{j<i}$ is an isomorphism. Similarly, we
say that $\mathcal F$ is {\it right
continuous} at $i\in I$ if $I$ is {\it not}
right dense at $i$ or if the inductive limit
of the restriction of $\mathcal F$ on
$I_{>i}$ exists and the morphism
$\varinjlim\mathcal
F|_{I_{>i}}\rightarrow\mathcal F(i)$ defined
by the system $(\mathcal F(u_{ji}):\mathcal
F(j)\rightarrow \mathcal F(i))_{j>i}$ is an
isomorphism. We say that a filtration
$\mathcal F$ is {\it left continuous} (resp.
{\it right continuous }) if it is left
continuous (resp. right continuous) at every
element of $I$. We denote by
$\Fil^{I,l}(\mathcal C)$ (resp.
$\Fil^{I,r}(\mathcal C)$) the full
subcategory of $\Fil^I(\mathcal C)$ formed by
all left continuous (resp. right continuous)
filtrations in $\mathcal C$.

Given an arbitrary filtration $\mathcal F$,
we want to construct a left continuous
filtration which is ``closest'' to the
original one. The best candidate is of course
the filtration $\mathcal F^l$ such that
\[\mathcal
F^l(i)=\begin{cases}\varprojlim_{k<i}\mathcal
F(k),&I\text{ is left dense at }i,\\
\mathcal F(i),&\text{otherwise}.\end{cases}\]
However, this filtration is well defined only
when all projective limits
$\varprojlim_{k<i}\mathcal F(k)$ exist for
any $i\in I$ where $I$ is left dense.
Therefore, under the following supplementary
condition ({\bf M}) for the category
$\mathcal C$:
\begin{quote}{\it any non-empty totally ordered system
of monomorphisms in $\mathcal C$ has a
projective limit,}\end{quote} for any
filtration $\mathcal F$ in $\mathcal C$, the
filtration $\mathcal F^l$ exists.
Furthermore, $\mathcal F\longmapsto\mathcal
F^l$ is a functor, which is left adjoint to
the forgetful functor from
$\mathbf{Fil}^{I,l}(\mathcal C)$ to
$\mathbf{Fil}^I(\mathcal C)$.

Similarly, given an arbitrary filtration
$\mathcal F$ of an object in $\mathcal C$, if
for any $i\in I$ where $I$ is right dense,
the inductive limit of the system $(\mathcal
F(j))_{j>i}$ exists, and the canonical
morphism $\varinjlim_{j>i}\mathcal
F(j)\rightarrow X$ defined by the system
$(\mathcal F(u_{j,-\infty}):\mathcal
F(j)\rightarrow X)_{j>i}$ is monomorphic,
then the filtration $\mathcal F^r$ such that
\[\mathcal F^r(i)=\begin{cases}
\varinjlim_{j>i}\mathcal F(j),&I\text{ is
right dense at }i,\\
\mathcal F(i),&\text{otherwise},
\end{cases}\]
is right continuous. Therefore, if the
following condition ($\mathbf{M}^*$) is
fulfilled for the category $\mathcal C$:
\begin{quote}\it
any non-empty totally ordered system
$(\xymatrix{\relax
X_i\ar[r]^-{\alpha_i}&X})_{i\in J}$ of
subobjects of an object $X$ in $\mathcal C$
has an inductive limit, and the canonical
morphism $\displaystyle\varinjlim
X_i\rightarrow X$ induced by
$(\alpha_i)_{i\in J}$ is monomorphic,
\end{quote}
then for any filtration $\mathcal F$ in
$\mathcal C$, the filtration $\mathcal F^r$
exists, and $\mathcal F\longmapsto\mathcal
F^r$ is a functor, which is right adjoint to
the forgetful functor from
$\mathbf{Fil}^{I,r}(\mathcal C)$ to
$\mathbf{Fil}^I(\mathcal C)$.

Let $X$ be an object in $\mathcal C$. All
$I$-filtrations of $X$ and all morphisms of
filtrations equalling to $\Id_X$ at $-\infty$
form a category, denoted by $\Fil_{X}^I$. We
denote by $\Fil_{X}^{I,l}$ (resp.
$\Fil_{X}^{I,r}$) the full subcategory of
$\Fil_{X}^I$ consisting of all left
continuous (resp. right continuous)
filtrations of $X$. The category $\Fil_{X}^I$
has a final object $C_X$ which sends all
$i\in I^*$ to $X$ and all morphisms in $I^*$
to $\Id_X$. We call it the {\it trivial}
filtration of $X$. If the condition ({\bf M})
is verified for the category $\mathcal C $,
the restriction of the functor $\mathcal
F\longmapsto\mathcal F^l$ on
$\mathbf{Fil}_{X}^I$ is a functor from
$\mathbf{Fil}_{X}^I$ to
$\mathbf{Fil}_{X}^{I,l}$, which is left
adjoint to the forgetful functor
$\mathbf{Fil}_{X}^{I,l}\rightarrow
\mathbf{Fil}_{X}^I$. Similarly, if the
condition ($\mathbf{M}^*$) is verified for
the category $\mathcal C$, the restriction of
the functor $\mathcal F\longmapsto\mathcal
F^r$ on $\mathbf{Fil}_{X}^I$ gives a functor
from $\mathbf{Fil}_{X}^I$ to
$\mathbf{Fil}_{X}^{I,r}$, which is right
adjoint to the forgetful functor
$\mathbf{Fil}_{X}^{I,r}\rightarrow
\mathbf{Fil}_{X}^I$.

In the following, we shall discuss functorial
constructions of filtrations. Namely, given a
morphism $f:X\rightarrow Y$ in a category
$\mathcal C$ and a filtration of $X$ or $Y$,
we shall explain how to construct a
``natural'' filtration of the other.

Suppose that $f:X\rightarrow Y$ is a morphism
in $\mathcal C$ and $\mathcal G$ is an
$I$-filtration of $Y$. If the fiber product
in the functor category
$\mathbf{Fun}(I^*,\mathcal C)$, defined by
$f^*\mathcal G:=\mathcal G\times_{C_Y}C_X$,
exists, where $C_X$ (resp. $C_Y$) is the
trivial filtration of $X$ (resp. $Y$), then
the functor $f^*\mathcal G$ is a filtration
of $X$. We call it the {\it inverse image} of
$\mathcal G$ by the morphism $f$. The
canonical projection $P$ from $f^*\mathcal G$
to $\mathcal G$ gives a morphism of
filtrations in $\Fil^I(\mathcal C)$ such that
$P(-\infty)=f$. In other words, the morphism
$f$ is compatible with the filtrations
$(f^*\mathcal G,\mathcal G)$. Since the fiber
product commutes to projective limits, if
$\mathcal G$ is left continuous at a point
$i\in I$, then also is $f^*\mathcal G$.

If in the category $\mathcal C$, all fiber
products exist\footnote{In this case, for any
small category $\mathcal D$, the category of
functors from $\mathcal D$ to $\mathcal C$
supports fiber products. In particular, all
fiber products in the category
$\mathbf{Fil}^I(\mathcal C)$ exist.}, then
for any morphism $f:X\rightarrow Y$ in
$\mathcal C$ and any filtration $\mathcal G$
of $Y$, the inverse image of $\mathcal G$ by
$f$ exists, and $f^*$ is a functor from
$\mathbf{Fil}_{Y}^I$ to $\mathbf{Fil}_{X}^I$
which sends $\mathbf{Fil}_{Y}^{I,l}$ to
$\mathbf{Fil}_{X}^{I,l}$.

Let $\mathcal C$ be a category and
$f:X\rightarrow Y$ be a morphism in $\mathcal
C$. We call {\it admissible decomposition} of
$f$ any triplet $(Z,u,v)$ such that:
\begin{enumerate}[1)]
\item $Z$ is an object of $\mathcal C$,
\item $u:X\rightarrow Z$ is a morphism in $\mathcal
C$ and $v:Z\rightarrow Y$ is a monomorphism
in $\mathcal C$ such that $f=vu$.
\end{enumerate}
If $(Z,u,v)$ and $(Z',u',v')$ are two
admissible decompositions of $f$, we call
{\it morphism of admissible decompositions}
from $(Z,u,v)$ to $(Z',u',v')$ any morphism
$\varphi:Z\rightarrow Z'$ such that $\varphi
u=u'$ and that $v=v'\varphi$.
\[\xymatrix{&Z\ar'[d][dd]|>>>>>>>{\varphi}\ar[rd]^v\\
X\ar[rd]_{u'}\ar[ru]^u\ar[rr]|<<<<<<<f&&Y\\
&Z'\ar[ru]_{v'}}\] All admissible
decompositions and their morphisms form a
category, denoted by $\Dec(f)$. If the
category $\Dec(f)$ has an initial object
$(Z_0,u_0,v_0)$, we say that $f$ has an {\it
image}. The monomorphism $v_0:Z_0\rightarrow
Y$ is called an {\it image} of $f$, or an
{\it image} of $X$ in $Y$ by the morphism
$f$, denoted by $\Image f$.

Suppose that $f:X\rightarrow Y$ is a morphism
in $\mathcal C$ and that $\mathcal F$ is a
filtration of $X$. If for any $i\in I$, the
morphism $f\circ\mathcal
F(u_{i,-\infty}):\mathcal F(i)\rightarrow Y$
has an image, then we can define a filtration
$f_{\flat}\mathcal F$ of $Y$, which
associates to each $i\in I$ the subobject
$\Image(f\circ\mathcal F(u_{i,-\infty}))$ of
$Y$. This filtration is called the {\it weak
direct image} of $\mathcal F$ by the morphism
$f$. If furthermore the filtration
$f_*\mathcal F:=(f_{\flat}\mathcal F)^l$ is
well defined, we called it the {\it strong
direct image} by $f$. Notice that for any
filtration $\mathcal F$ of $X$, the morphism
$f$ is compatible with filtrations $(\mathcal
F,f_{\flat}\mathcal F)$ and $(\mathcal
F,f_*\mathcal F)$ (if $f_{\flat}\mathcal F$
and $f_*\mathcal F$ are well defined).
Moreover, if any morphism in $\mathcal C$ has
an image, then $f_{\flat}$ is a functor from
$\Fil_{X}^I$ to $\Fil_{Y}^I$. If in addition
the condition ({\bf M}) is fulfilled for the
category $\mathcal C$, $f_*$ is a functor
from $\Fil_{X}^I$ to $\Fil_{Y}^{I,l}$.

\begin{proposition}
Let $\mathcal C$ be a category which supports
fiber products and such that any morphism in
it admits an image. If $f:X\rightarrow Y$ is
a morphism in $\mathcal C$, then the functor
$f^*:\Fil_{Y}^I\rightarrow \Fil_{X}^I$ is
right adjoint to the functor $f_{\flat}$.
\end{proposition}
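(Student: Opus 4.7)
The plan is to exhibit, for each pair $(\mathcal F,\mathcal G)\in\Fil_X^I\times\Fil_Y^I$, a natural bijection
\[\Hom_{\Fil_Y^I}(f_\flat\mathcal F,\mathcal G)\;\cong\;\Hom_{\Fil_X^I}(\mathcal F,f^*\mathcal G),\]
and to show that both sides are in fact either empty or singleton, governed by the \emph{same} condition on $(f,\mathcal F,\mathcal G)$. This reduces the adjunction to a tautology once the condition is identified; naturality is then essentially free because all morphisms in sight are uniquely determined once they are shown to exist.

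\medskip

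\textbf{Step 1: reduce both Hom-sets to a single factorization condition.} Recall that in $\Fil_X^I$ and $\Fil_Y^I$ morphisms are required to be $\Id_X$ and $\Id_Y$ at $-\infty$, and that all canonical morphisms $\mathcal F(i)\to X$ and $\mathcal G(i)\to Y$ are monomorphisms. Consequently, a morphism of filtrations is determined by its components, each of which is itself unique if it exists. I would first observe that giving a morphism $\mathcal F\to f^*\mathcal G$ in $\Fil_X^I$ amounts, by the universal property of the fiber product $f^*\mathcal G=\mathcal G\times_{C_Y}C_X$ in $\mathbf{Fun}(I^*,\mathcal C)$, to giving a morphism of $I$-filtrations $F:\mathcal F\to\mathcal G$ with $F(-\infty)=f$; equivalently, to giving, for each $i\in I$, a (necessarily unique) morphism $F_i:\mathcal F(i)\to\mathcal G(i)$ making
\[f\circ\mathcal F(u_{i,-\infty})\;=\;\mathcal G(u_{i,-\infty})\circ F_i\]
hold. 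Call this condition $(\ast)$. Thus $\Hom_{\Fil_X^I}(\mathcal F,f^*\mathcal G)$ is a singleton if $(\ast)$ holds and is empty otherwise.

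\medskip

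\textbf{Step 2: interpret the right-hand side via the universal property of the image.} A morphism $f_\flat\mathcal F\to\mathcal G$ in $\Fil_Y^I$ is, by the same monomorphism argument, a family of factorizations of $(f_\flat\mathcal F)(i)=\Image(f\circ\mathcal F(u_{i,-\infty}))\hookrightarrow Y$ through $\mathcal G(i)\hookrightarrow Y$; and since the image is the initial object of $\Dec(f\circ\mathcal F(u_{i,-\infty}))$, such a factorization exists (uniquely) if and only if $f\circ\mathcal F(u_{i,-\infty})$ factors through the monomorphism $\mathcal G(i)\hookrightarrow Y$, i.e.\ if and only if $(\ast)$ holds for the index $i$. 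Running over all $i$, the condition controlling $\Hom_{\Fil_Y^I}(f_\flat\mathcal F,\mathcal G)$ is therefore exactly the same $(\ast)$ as in Step~1, and the set is again a singleton or empty accordingly.

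\medskip

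\textbf{Step 3: assemble the adjunction and check naturality.} Combining Steps~1 and~2 gives a canonical bijection $\Phi_{\mathcal F,\mathcal G}:\Hom_{\Fil_Y^I}(f_\flat\mathcal F,\mathcal G)\to\Hom_{\Fil_X^I}(\mathcal F,f^*\mathcal G)$. For naturality in $\mathcal G$, given $\mathcal G\to\mathcal G'$ in $\Fil_Y^I$ one checks that both the composite in $\Fil_X^I$ and its counterpart in $\Fil_Y^I$ produce morphisms whose $i$-components satisfy $(\ast)$ with respect to $\mathcal G'$; uniqueness of such components forces the naturality square to commute. Naturality in $\mathcal F$ is identical, using that $f_\flat$ is functorial (this uses that every morphism has an image, already in the hypothesis) and that inverse images commute with the relevant pullbacks along $X$-morphisms.

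\medskip

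\textbf{Expected obstacle.} The single step requiring care is the translation in Step~2 of ``morphism of filtrations $f_\flat\mathcal F\to\mathcal G$ above $\Id_Y$'' into the family of factorizations $(\ast)$: one must invoke twice the universal property of $\Image$, first to identify $(f_\flat\mathcal F)(i)$ as the initial admissible decomposition of $f\circ\mathcal F(u_{i,-\infty})$, and then to match this with the factorization data through $\mathcal G(i)\hookrightarrow Y$. Once this identification is made, the rest of the argument is formal, since the monomorphism hypothesis on filtrations collapses any potential coherence issue to uniqueness of the factoring morphisms.
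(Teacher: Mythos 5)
Your proof is correct and takes essentially the same route as the paper's: the paper likewise obtains, for each $i$, the unique morphism from the image $(f_{\flat}\mathcal G)(i)$ into the target filtration via the initiality of the image in $\Dec$ combined with the monicity of the canonical morphisms into $Y$ (resp.\ $X$), and your observation that both Hom-sets are subsingletons governed by the same factorization condition is just a repackaging of that uniqueness. Nothing is missing; the naturality issues are disposed of exactly as in the paper, by letting the monomorphisms collapse all coherence squares.
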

\begin{proof} Let $\mathcal F$ be a
filtration of $Y$, $\mathcal G$ be a
filtration of $X$ and $\tau:\mathcal
G\rightarrow f^*\mathcal F$ be a morphism.
For any $i\in I$ let $\varphi_i:\mathcal
F(i)\rightarrow Y$ and $\psi_i:\mathcal
G(i)\rightarrow X$ be canonical morphisms,
and let $(f_{\flat}\mathcal G(i),u_i,v_i)$ be
an image of $\mathcal G(i)$ by the morphism
$f\psi_i$. Since the morphism
$\varphi_i:\mathcal F(i)\rightarrow Y$ is
monomorphic, there exists a unique morphism
$\eta_i$ from $f_{\flat}\mathcal G(i)$ to
$\mathcal F(i)$ such that
$\varphi_i\eta_i=v_i$ and that
$\eta_iu_i=\pr_1\tau(i)$.
\[\xymatrix{& f^*\mathcal
F(i)\ar[rr]^{\pr_1}\ar'[d][dd]&&\mathcal
F(i)\ar[dd]^{\varphi_i}\\
\mathcal G(i)\ar[ru]^{\tau(i)}
\ar[rr]|<<<<<<<{\,u_i}\ar[rd]_{\psi_i}
&&f_{\flat}\mathcal
G(i)\ar[ru]^{\eta_i}\ar[rd]_{v_i}\\
&X\ar[rr]_f&&Y }\] Hence we have a functorial
bijection $\Hom_{\Fil_{X}^I}(\mathcal
G,f^*\mathcal
F)\stackrel{\sim}{\longrightarrow}\Hom_{
\Fil_{Y}^I}(f_{\flat}\mathcal G,\mathcal F)$.
\end{proof}

\begin{corollary}
With the notations of the previous
proposition, if we suppose in addition that
the condition $(\mathbf{M})$ is verified for
the category $\mathcal C$, then for any
morphism $f:X\rightarrow Y$ in $\mathcal C$,
the functor
$f^*:\Fil_{Y}^{I,l}\rightarrow\Fil_{X}^I$ is
right adjoint to the functor $f_*$.
\end{corollary}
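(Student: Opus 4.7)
The plan is to derive this adjunction formally as a composition of two adjunctions already established in the preceding discussion, so that essentially no new verification beyond chasing definitions is needed.

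First, I would recall that by definition $f_{*}=(-)^{l}\circ f_{\flat}$ as functors from $\Fil_{X}^{I}$ to $\Fil_{Y}^{I,l}$: the strong direct image is the left-continuous regularization of the weak one. Second, the text just before the previous proposition records that when $(\mathbf{M})$ holds the functor $\mathcal F\mapsto\mathcal F^{l}$, viewed as a functor $\Fil_{Y}^{I}\to\Fil_{Y}^{I,l}$, is left adjoint to the forgetful functor $\Fil_{Y}^{I,l}\to\Fil_{Y}^{I}$, and this adjunction still holds for the object-fixed versions $\Fil_{Y}^{I}$ and $\Fil_{Y}^{I,l}$. Third, the previous proposition gives the adjunction $f_{\flat}\dashv f^{*}$ between $\Fil_{X}^{I}$ and $\Fil_{Y}^{I}$.

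I would then simply compose: for $\mathcal G\in\Fil_{X}^{I}$ and $\mathcal F\in\Fil_{Y}^{I,l}$, one has natural bijections
\[
\Hom_{\Fil_{Y}^{I,l}}(f_{*}\mathcal G,\mathcal F)
=\Hom_{\Fil_{Y}^{I,l}}((f_{\flat}\mathcal G)^{l},\mathcal F)
\cong \Hom_{\Fil_{Y}^{I}}(f_{\flat}\mathcal G,\mathcal F)
\cong \Hom_{\Fil_{X}^{I}}(\mathcal G,f^{*}\mathcal F),
\]
using the $(-)^{l}\dashv{\rm forget}$ adjunction in the middle (which applies because the target $\mathcal F$ is left continuous) and the preceding proposition on the right. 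The composite bijection is natural in both arguments since each of the two individual bijections is, so this gives the desired adjunction $f_{*}\dashv f^{*}|_{\Fil_{Y}^{I,l}}$.

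There is essentially no obstacle to overcome: the only point that requires a moment's care is to confirm that the previous proposition's adjunction, which is formulated for the general categories $\Fil^{I}(\mathcal C)$, restricts correctly to the categories $\Fil_{X}^{I}$ and $\Fil_{Y}^{I}$ of filtrations with fixed top object and morphisms equal to $\Id$ at $-\infty$. This is immediate from the construction of $f^{*}$ as a fibre product with $C_{X}$ (so $f^{*}\mathcal F$ canonically has top object $X$) and from the construction of $f_{\flat}$ via images (so $f_{\flat}\mathcal G$ canonically has top object $Y$), which is why both bijections above preserve the condition of being $\Id$ at $-\infty$.
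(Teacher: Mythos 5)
Your proposal is correct and is essentially the paper's own proof: the paper also establishes the adjunction by composing the functorial bijections $\Hom_{\Fil_{X}^I}(\mathcal F,f^*\mathcal G)\cong\Hom_{\Fil_{Y}^I}(f_{\flat}\mathcal F,\mathcal G)\cong\Hom_{\Fil_{Y}^{I,l}}(f_*\mathcal F,\mathcal G)$, i.e.\ the $f_{\flat}\dashv f^*$ adjunction of the previous proposition followed by the $(-)^l\dashv\text{forget}$ adjunction available under $(\mathbf{M})$. Your closing remark about restricting to the fixed-object categories is harmless but unnecessary, since the previous proposition is already stated for $\Fil_{X}^I$ and $\Fil_{Y}^I$.
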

\begin{proof} For any filtration
$\mathcal F$ of $X$ and any left continuous
filtration $\mathcal G$ of $Y$, we have the
following functorial bijections
\[\Hom_{\Fil_{X}^I}(\mathcal F,f^*\mathcal G)\stackrel{\sim}
{\longrightarrow}\Hom_{\Fil_{Y}^I}(f_{\flat}\mathcal
F,\mathcal
G)\stackrel{\sim}{\longrightarrow}\Hom_{
\Fil_{Y}^{I,l}}(f_*\mathcal F,\mathcal G).\]
\end{proof}

In the last part of the section, we shall
discuss a special type of filtrations, namely
filtrations of finite length, which are
important for later sections.

Let $\mathcal C$ be a category. We say that a
filtration $\mathcal F$ of $X\in\obj\mathcal
C$ is {\it of finite length} if there exists
a finite subset $I_0$ of $I$ such that, for
any $i>j$ satisfying
$I_0\cap[j,i]=\emptyset$, the morphism
$\mathcal F(u_{ij})$ is isomorphic. The
subset $I_0$ of $I$ is called a {\it jumping
set} of $\mathcal F$. We may have different
choices of jumping set. In fact, if $I_1$ is
an arbitrary finite subset of $I$ and if
$I_0$ is a jumping set of $\mathcal F$, then
$I_0\cup I_1$ is also a jumping set of
$\mathcal F$. However, the intersection of all jumping sets of $\mathcal F$ is itself a jumping set, called the {\it minimal jumping set} of $\mathcal F$.

Let $f:X\rightarrow Y$ be a morphism in
$\mathcal C$. If $\mathcal G$ is a filtration
of finite length of $Y$ such that
$f^*\mathcal G$ is well defined, then the
filtration $f^*\mathcal G$ is also of finite
length since the fibre product preserves
isomorphisms.

Let $\mathcal C$ be a category, $X$ be an
object in $\mathcal C$ and $\mathcal F$ be an
$I$-filtration of $X$. We say that $\mathcal
F$ is {\it left locally constant} at $i\in I$
if $I$ is not left dense at $i$ or if there
exists $j<i$ such that $\mathcal F(u_{ij})$
is an isomorphism, or equivalently $\mathcal
F(u_{ik})$ is an isomorphism for any
$k\in[j,i[$. Similarly, we say that $\mathcal
F$ is {\it right locally constant} at $i$ if
$I$ is not right dense at $i$ or if there
exists $j>i$ such that $\mathcal F(u_{ji})$
is an isomorphism, or equivalently, $\mathcal
F(u_{ki})$ is an isomorphism for any
$k\in]i,j]$. We say that the filtration
$\mathcal F$ is {\it left locally constant}
(resp. {\it right locally constant}) if it is
left locally constant (resp. right locally
constant) at any point $i\in I$.

\begin{proposition}
Let $\mathcal C$ be a category, $X$ be an
object in $\mathcal C$, $\mathcal F$ be a
filtration of finite length of $X$, and $I_0$
be a jumping set of $\mathcal F$. For any
$i\in I\setminus I_0$, the filtration
$\mathcal F$ is left and right locally
constant at $i$.
\end{proposition}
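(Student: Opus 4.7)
The plan is to reduce the two symmetric assertions to essentially the same argument, and in each case simply exploit the finiteness of $I_0$ together with the density characterisation given by Proposition \ref{Pro:critere locale de dense}. Fix $i\in I\setminus I_0$; I will only discuss the ``left'' assertion, the ``right'' one being entirely analogous with the roles of $I_{<i}$ and $I_{>i}$ swapped.

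If $I$ is not left dense at $i$, there is nothing to prove: by definition $\mathcal F$ is already left locally constant at $i$. So I assume $I$ is left dense at $i$. The goal is then to exhibit $j\in I_{<i}$ such that $[j,i]\cap I_0=\emptyset$, because by the very definition of a jumping set this forces $\mathcal F(u_{ij})$ to be an isomorphism, which is what left local constancy at $i$ asks for.

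To produce such a $j$, I would distinguish two cases. If $I_0\cap I_{<i}=\emptyset$, then any element $j\in I_{<i}$ (which exists by left density) satisfies $[j,i]\cap I_0=\emptyset$, since $i\notin I_0$ and no element strictly below $i$ belongs to $I_0$. Otherwise, the set $I_0\cap I_{<i}$ is non-empty and finite, so it admits a maximum $j_0$. By Proposition \ref{Pro:critere locale de dense}, left density of $I$ at $i$ ensures that $]j_0,i[$ is non-empty, so I may pick $j\in\,]j_0,i[$. Then $[j,i]\subseteq\,]j_0,i]$; by maximality of $j_0$ no element of $I_0\cap I_{<i}$ lies in $]j_0,i[$, and since $i\notin I_0$ we also have $i\notin I_0\cap[j,i]$, hence $[j,i]\cap I_0=\emptyset$, as required.

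There is no real obstacle here: the argument is a direct combination of the definition of jumping set, the fact that a finite subset of a totally ordered set has a maximum (and a minimum for the right-hand version), and the density criterion already established. The only point to be slightly careful about is to place $j$ \emph{strictly} above $j_0$ (and strictly below $i$) so that both endpoints of $[j,i]$ avoid $I_0$; this is precisely what Proposition \ref{Pro:critere locale de dense} makes possible.
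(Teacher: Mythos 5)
Your proof is correct and follows essentially the same route as the paper's: reduce to the case where $I$ is left dense at $i$, take $j_0=\max(I_{<i}\cap I_0)$, use the density criterion to pick $j\in\,]j_0,i[$, and conclude from the definition of a jumping set that $\mathcal F(u_{ij})$ is an isomorphism. The only difference is that you explicitly treat the case $I_0\cap I_{<i}=\emptyset$, a harmless corner the paper passes over silently, so your write-up is if anything slightly more careful.
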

\begin{proof}
Let $i\in I\setminus I_0$ be an element where
$I$ is left dense. Since $I_0$ is a finite
set, also is $I_{<i}\cap I_0$. Let
$j_0=\max(I_{<i}\cap I_0)$. We have $j_0<i$,
therefore the set $]j_0,i[$ is non-empty since $I$ is left dense at $i$. Choose an arbitrary element $j\in
]j_0,i[$. We have $[j,i]\cap I_0=\emptyset$,
so $\mathcal F(u_{i,j})$ is an isomorphism.
Therefore, $\mathcal F$ is left locally
constant at $i$. The proof for the fact that
$\mathcal F$ is right locally constant at $i$
is similar.
\end{proof}

\begin{proposition}
If the filtration $\mathcal F$ is left
locally constant (resp. right locally
constant), then it is left continuous (resp.
right continuous). The converse is true when
the filtration $\mathcal F$ is of finite
length.
\end{proposition}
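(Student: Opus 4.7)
Fix $i\in I$ throughout; both properties are vacuous when $I$ is not left dense at $i$, so assume it is. For the direct implication, left local constancy at $i$ supplies a $j<i$ such that $\mathcal F(u_{ik})$ is an isomorphism for every $k\in[j,i[$. Rather than invoke existence of the projective limit as a separate hypothesis, I will verify directly that $\bigl(\mathcal F(i),\mathcal F(u_{ik})\bigr)_{k<i}$ is itself a limit cone for $\mathcal F|_{I_{<i}}$. Given a competing cone $(T,\tau_k)_{k<i}$, I take as mediating map $\sigma:=\mathcal F(u_{ij})^{-1}\circ\tau_j$ and check $\mathcal F(u_{ik})\circ\sigma=\tau_k$ by splitting into $k\le j$ (pure functoriality of $\mathcal F$ combined with compatibility of $\tau$) and $k\in[j,i[$ (using that $\mathcal F(u_{kj})$ is also an isomorphism, by two-out-of-three applied to $\mathcal F(u_{ij})=\mathcal F(u_{kj})\circ\mathcal F(u_{ik})$). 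Uniqueness of $\sigma$ is forced by monomorphicity of $\mathcal F(u_{ij})$, yielding left continuity at $i$.

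For the converse, suppose $\mathcal F$ is of finite length and left continuous, with jumping set $I_0$. Finiteness of $I_0\cap I_{<i}$ lets me choose $j<i$ with $[j,i[\,\cap\,I_0=\emptyset$: take $j\in\,]j_0,i[$ when $j_0:=\max(I_0\cap I_{<i})$ exists (this interval being non-empty thanks to left density), and any $j\in I_{<i}$ otherwise. The jumping set condition then makes every $\mathcal F(u_{kk'})$ an isomorphism when $j\le k'\le k<i$, and the goal reduces to proving $\mathcal F(u_{ij})$ is itself an isomorphism. Setting $L:=\varprojlim\mathcal F|_{I_{<i}}$ with projections $p_k$, left continuity provides an isomorphism $\mathcal F(i)\to L$ whose composition with $p_j$ recovers $\mathcal F(u_{ij})$; it thus suffices to show $p_j$ is an isomorphism.

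My plan for this last step is to build an inverse $q:\mathcal F(j)\to L$ from the compatible family $q_k:=\mathcal F(u_{jk})$ for $k\le j$ and $q_k:=\mathcal F(u_{kj})^{-1}$ for $k\in\,]j,i[$, the latter making sense by the isomorphism property above. Compatibility of this cone splits into three cases according to the position of $k_1,k_2$ relative to the pivot $j$, each a direct functoriality check through $j$; the identities $p_j\circ q=\Id_{\mathcal F(j)}$ and $q\circ p_j=\Id_L$ then follow from $q_j=\Id$ and from the universal property of $L$ respectively. The only point genuinely needing care is the direction-of-arrows bookkeeping as $k$ crosses the pivot $j$, reflecting the convention that $-\infty$ is the final object of $I^*$ so that $\mathcal F$ is a decreasing filtration; apart from that, everything reduces to routine diagram chasing.
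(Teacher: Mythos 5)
Your proof is correct and follows essentially the same route as the paper: the forward direction is the (there untreated, ``trivial'') verification that an eventually constant system has $\mathcal F(i)$ as its limit, and for the converse you pick $j<i$ with $[j,i[\,\cap\,I_0=\emptyset$ exactly as the paper does, so that above $j$ the system consists of isomorphisms, and then use left continuity to conclude that $\mathcal F(u_{ij})$ is an isomorphism. Your explicit construction of the inverse of $p_j$ merely spells out what the paper compresses into ``$\mathcal F(i)$ is the projective limit of a totally ordered system of isomorphisms,'' i.e.\ a cofinality argument.
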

\begin{proof}
``$\Longrightarrow$'' is trivial.

``$\Longleftarrow$'': Suppose that $\mathcal
F$ is a left continuous filtration of finite
length. Let $I_0$ be a jumping set of
$\mathcal F$. If $I$ is left dense at $i$,
there then exists an element $j<i$ in $I$
such that $[j,i[\cap I_0=\emptyset$. Since
$\mathcal F$ is left continuous at $i$,
$\mathcal F(i)$ is the projective limit of a
totally ordered system of isomorphisms.
Therefore $\mathcal F(u_{ij})$ is an
isomorphism. The proof of the other assertion
is the same.
\end{proof}

\begin{corollary}
Let $\mathcal C$ be a category and $\mathcal
F$ be a filtration of finite length in
$\mathcal C$. If $\mathcal F^l$ (resp.
$\mathcal F^r$) is well defined, then it is
also of finite length.
\end{corollary}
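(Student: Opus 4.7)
The plan is to show that any jumping set $I_0$ of $\mathcal{F}$ remains a jumping set for $\mathcal{F}^l$; the argument for $\mathcal{F}^r$ is analogous, with inductive limits and right-density taking the roles of projective limits and left-density.

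The key step will be to prove that, for every $i\in I\setminus I_0$, the canonical morphism $\mathcal{F}(i)\to\mathcal{F}^l(i)$ is an isomorphism. When $I$ is not left dense at $i$ this is immediate from the definition $\mathcal{F}^l(i)=\mathcal{F}(i)$. Suppose instead that $I$ is left dense at $i$. By the previous proposition, $\mathcal{F}$ is left locally constant at $i$, so there exists $j_0<i$ such that $\mathcal{F}(u_{ik})$ is an isomorphism for every $k\in[j_0,i[$. Since $[j_0,i[$ is cofinal in the totally ordered set $I_{<i}$,
\[\mathcal{F}^l(i)=\varprojlim_{k<i}\mathcal{F}(k)\;\cong\;\varprojlim_{k\in[j_0,i[}\mathcal{F}(k),\]
and the right-hand limit, being the limit of a system all of whose transition morphisms are isomorphisms, identifies canonically with $\mathcal{F}(i)$.

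With this in hand, I would take any pair $i>j$ in $I$ with $[j,i]\cap I_0=\emptyset$. Both endpoints then lie outside $I_0$, so naturality of the canonical morphism $\mathcal{F}\to\mathcal{F}^l$ yields a commutative square whose top row is $\mathcal{F}(u_{ij})$---an isomorphism since $I_0$ is a jumping set of $\mathcal{F}$---and whose two vertical arrows are the isomorphisms established in the previous paragraph. Consequently $\mathcal{F}^l(u_{ij})$ is also an isomorphism, so $I_0$ is a jumping set of $\mathcal{F}^l$, which is therefore of finite length.

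I expect the main obstacle to be the cofinality calculation: one has to verify carefully that the projective limit $\varprojlim_{k<i}\mathcal{F}(k)$ (which exists by the hypothesis that $\mathcal{F}^l$ is well defined) is genuinely computed by its restriction to the cofinal subsystem indexed by $[j_0,i[$, so that $\mathcal{F}^l(i)\cong\mathcal{F}(i)$ outside $I_0$. Beyond that point the argument is purely formal naturality, and the $\mathcal{F}^r$ case proceeds dually via a cofinal subsystem of the form $]i,j_0']$ in $I_{>i}$ obtained from right local constancy.
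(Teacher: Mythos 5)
Your proof is correct and follows essentially the same route as the paper: both arguments show that $\mathcal F^l(i)\cong\mathcal F(i)$ (compatibly with the transition morphisms) at every $i\notin I_0$, so that $I_0$ remains a jumping set of $\mathcal F^l$, and dually for $\mathcal F^r$. The only difference is that your cofinality computation re-proves by hand what the paper obtains by citing its earlier propositions (finite length implies locally constant outside $I_0$, and locally constant implies continuous, whence $\mathcal F^l(i)=\mathcal F(i)$ there), so you could shorten your argument by invoking those directly.
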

\begin{proof}
Let $I_0$ be a jumping set of $\mathcal F$.
We know that the filtration $\mathcal F$ is
left continuous outside $I_0$, hence for any
$i\in I\setminus I_0$ we have $\mathcal
F^l(i)=\mathcal F(i)$, and if $[j,i]\subset
I$ doesn't encounter $I_0$, then $\mathcal
F^l(u_{ij})=\mathcal F(u_{ij})$. Therefore,
$\mathcal F^l$ is of finite length, and $I_0$
is a jumping set of $\mathcal F^l$. The proof
for the other assertion is similar.
\end{proof}

\begin{corollary}
Let $\mathcal C$ be a category,
$f:X\rightarrow Y$ be a morphism in $\mathcal
C$ and $\mathcal F$ be a filtration of finite
length of $X$. If $f_{\flat}(\mathcal F)$
(resp. $f_*(\mathcal F)$) is well defined,
then it is also of finite length.
\end{corollary}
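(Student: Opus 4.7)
The plan is to show that the same jumping set $I_0$ for $\mathcal{F}$ works for $f_{\flat}\mathcal{F}$, and then to reduce the statement for $f_*\mathcal{F}$ to the previous corollary about $\mathcal{F}^l$.

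First I would verify the key lemma: for any isomorphism $\alpha:A\to A'$ and any morphism $g:A'\to Y$ admitting an image, the images of $g$ and of $g\alpha$ coincide as subobjects of $Y$. Indeed, if $(Z,u,v)$ is initial in $\Dec(g)$, then $(Z,u\alpha,v)$ is an admissible decomposition of $g\alpha$; and given any other admissible decomposition $(Z',u',v')$ of $g\alpha$, the triplet $(Z',u'\alpha^{-1},v')$ is an admissible decomposition of $g$, so the universal property of $(Z,u,v)$ supplies a unique morphism $Z\to Z'$ compatible with $(u\alpha,v)$ and $(u',v')$. Hence $(Z,u\alpha,v)$ is initial in $\Dec(g\alpha)$, giving $\Image(g\alpha)=\Image(g)=v$ as a subobject of $Y$.

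Next, let $I_0$ be a jumping set of $\mathcal{F}$ and take $i>j$ with $[j,i]\cap I_0=\emptyset$. By hypothesis $\mathcal{F}(u_{ij})$ is an isomorphism. Setting $\psi_k=\mathcal{F}(u_{k,-\infty})$, we have $f\psi_i=f\psi_j\circ\mathcal{F}(u_{ij})$, so the lemma above applied to $\alpha=\mathcal{F}(u_{ij})$ and $g=f\psi_j$ shows that the images in $Y$ agree: $f_{\flat}\mathcal{F}(i)=f_{\flat}\mathcal{F}(j)$ as subobjects, and hence $f_{\flat}\mathcal{F}(u_{ij})$ is an isomorphism. This proves that $f_{\flat}\mathcal{F}$ is of finite length with jumping set $I_0$.

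For the strong direct image, by definition $f_*\mathcal{F}=(f_{\flat}\mathcal{F})^l$, which is well defined by assumption. Since $f_{\flat}\mathcal{F}$ is of finite length by the previous step, the preceding corollary (asserting that $\mathcal{F}^l$ is of finite length whenever $\mathcal{F}$ is) immediately yields that $f_*\mathcal{F}$ is of finite length, again with $I_0$ as a jumping set. The only non-routine ingredient is the isomorphism-invariance of images, which I expect to be the main (though modest) obstacle; everything else is a direct transcription of the definition of finite length.
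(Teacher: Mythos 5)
Your proof is correct, and it supplies exactly the argument the paper leaves implicit (the corollary is stated without proof, in the same spirit as the earlier remark that $f^*\mathcal G$ has finite length ``since the fibre product preserves isomorphisms''). Your lemma that precomposing with an isomorphism does not change the image, hence that $I_0$ remains a jumping set of $f_{\flat}\mathcal F$, together with the appeal to the preceding corollary for $f_*\mathcal F=(f_{\flat}\mathcal F)^l$, is precisely the intended route.
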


Let $\mathcal C$ be a category and $\mathcal
F$ be an $I$-filtration of an object $X$ in
$\mathcal C$ which is of finite length. The
filtration $\mathcal F$ is exhaustive if and
only if there exists $i_1\in I$ such that
$\mathcal F(u_{ji})$ is isomorphism for all
$i\le j\le i_1$ in $I^*$. Suppose that
$\mathcal C$ has an initial object, then the
filtration $\mathcal F$ is separated if and
only if there exists $i_2\in I$ such that
$\mathcal F(i)$ is an initial object for any
$i\ge i_2$.

Let $\mathcal C$ and $\mathcal D$ be two
categories and $F:\mathcal
C\rightarrow\mathcal D$ be a functor. If $F$
sends any monomorphism in $\mathcal C$ to a
monomorphism in $\mathcal D$, then $F$
induces a functor $\widetilde
F:\Fil^I(\mathcal
C)\rightarrow\Fil^I(\mathcal D )$. If $X$ is
an object of $\mathcal C$ and if $\mathcal F$
is a filtration of $X$, $\widetilde
F(\mathcal F)$ is called the filtration {\it
induced} from $\mathcal F$ by the functor
$F$. The following assertions can be deduced
immediately from definition:
\begin{enumerate}[i)]
\item If $\mathcal F$ is left locally constant
(resp. right locally constant, of finite
length), then $\widetilde F(\mathcal F)$ is
left locally constant (resp. right locally
constant, of finite length).
\item If
$\mathcal F$ is of finite length and
exhaustive, then also is $\widetilde
F(\mathcal F)$.
\item Suppose that $\mathcal C$ and
$\mathcal D$ have initial objects and that
$F$ preserves initial objects. If $\mathcal
F$ is separated and of finite length, the
also is $\widetilde F(\mathcal F)$.
\end{enumerate}

\section{Arithmetic exact categories}

\hskip\parindent The notion of exact
categories is defined by Quillen
\cite{Quillen73}. It is a generalization of
Abelian categories. For example, the category
of all locally free sheaves on a smooth
projective curve is an exact category, but it
is not an Abelian category. Furthermore,
there are natural categories which fail to be
exact, but look alike. A typical example is
the category of Hermitian vector spaces and
linear applications of norm $\le 1$. Another
example is the category of finite dimensional
filtered vector spaces over a field and
linear applications which are compatible with
filtrations. A common characteristic of such
categories is that any object in such a
category can be described as an object in an
exact category equipped with certain
additional structure. In the first example,
it is a finite dimensional complex vector
space equipped with a Hermitian metric; and
in the second one, it is a finite dimensional
vector space equipped with a filtration.

In the following we shall formalize the above
observation by a new notion --- {\it
arithmetic exact category} --- by proposing
some axioms and we shall provide several
examples. Let us begin by recalling the exact
categories in the sense of Quillen. Let
$\mathcal C$ be an essentially
small category and
let $\mathcal E$ be a class of diagrams in
$\mathcal C$ of the form
\[\xymatrix{0\ar[r]&X'\ar[r]&X\ar[r]&X''\ar[r]&0}.\]
If $\xymatrix{\relax 0\ar[r]&X'\ar[r]^-f&X
\ar[r]^-g&X''\ar[r]&0}$ is a diagram in
$\mathcal E$, we say that $f$ is an {\it
admissible monomorphism} and that $g$ is an
{\it admissible epimorphism}. We shall use
the symbol ``$\xymatrix{{}\ar@{
>->}[r]&}$'' to denote an admissible
monomorphism, and
``$\xymatrix{{}\ar@{->>}[r]&}$'' for an
admissible epimorphism.

If $\mathcal
F:\xymatrix{0\ar[r]&X'\ar[r]&X\ar[r]&X''\ar[r]&0}$
and $\mathcal
G:\xymatrix{0\ar[r]&Y'\ar[r]&Y\ar[r]&Y''\ar[r]&0}$
are two diagrams of morphisms in $\mathcal
C$, we call {\it morphism} from $\mathcal F$
to $\mathcal G$ any commutative diagram
\[\xymatrix{\relax 0\ar@{}[d]_-{{\displaystyle (\Phi):\quad}}\ar[r]&X'\ar[r]\ar[d]_{\varphi'}&
X\ar[r]\ar[d]_{\varphi}&X''\ar[r]\ar[d]^{\varphi''}&0\\
0\ar[r]&Y'\ar[r]&Y\ar[r]&Y''\ar[r]&0.}
\] We say that $(\Phi)$ is an {\it
isomorphism} if $\varphi'$, $\varphi$ and
$\varphi''$ are all isomorphisms in $\mathcal
C$.

\begin{definition}[Quillen]\label{Def:catetgorie exacte}
We say that $(\mathcal C,\mathcal E)$ is an
{\it exact category} if the following axioms
are verified:
\begin{enumerate}[($\mathbf{Ex}$1)]
\item \label{Axiom:ex epi et mono dans une suite exacte}For ay diagram
\[\xymatrix{0\ar[r]&X'\ar[r]^\varphi&X\ar[r]^\psi
&X''\ar[r]&0}\] in $\mathcal E$, $\varphi$ is
a {\it kernel} of $\psi$ and $\psi$ is a {\it
cokernel} of $\varphi$.
\item \label{Axiom:ex somme direct}
If $X$ and $Y$ are two objects in $\mathcal
C$, then the diagram
\[\xymatrix{\relax 0\ar[r]&X\ar[r]^-{(\Id,0)}&X\amalg Y\ar[r]^-{\pr_2}&Y\ar[r]&0}\]
is in $\mathcal E$.
\item \label{Axiom:ex E invariant par iso} Any diagram which is isomorphe to a diagram in $\mathcal E$ lies also in $\mathcal
E$.
\item \label{Axiom:ex composition invariant epi et mono} If $f:X\rightarrow Y$ and
$g:Y\rightarrow Z$ are two admissible
monomorphisms (resp. admissible
epimorphismes), then also is $gf$.
\item \label{Axiom:ex mono stable par coproduit}
For any admissible monomorphism
$f:X'\rightarrowtail X$ and any morphism
$u:X'\rightarrow Y$ in $\mathcal C$, the
fiber coproduct of $f$ and $u$ exists.
Furthermore, if the diagram
\[\xymatrix{X'\ar@{ >->}[r]^f\ar[d]_u&X\ar[d]^v\\
Y\ar[r]_g&Z}\] is cocartesian, then $g$ is an
admissible monomorphism.
\item \label{Axiom: ex epi stable par probduit} For any admissible epimorphism
$f:X\twoheadrightarrow X''$ and any morphism
$u:Y\rightarrow X''$ in $\mathcal C$, the
fiber product of $f$ and $u$ exists.
Furthermore, if the diagram
\[\xymatrix{Z\ar[r]^g\ar[d]_v&Y\ar[d]^u\\ X\ar@{->>}[r]_f&X''}\]
is cartesian, then $g$ is an admissible
epimorphism.
\item \label{Axiom: ex composition ep mon implie un epi mono}For any morphism $f:X\rightarrow Y$ in $\mathcal C$
having a kernel (resp. cokernel), if there
exists an morphism $g:Z\rightarrow X$ (resp.
$g:Y\rightarrow Z$) such that $fg$ (resp.
$gf$) is an admissible epimorphism (resp.
admissible monomorphism), then also is $f$
itself.
\end{enumerate}
\end{definition}

Keller \cite{Keller90} has shown that the axiom ({\bf Ex}\ref{Axiom: ex composition ep mon implie un epi mono}) is actually a consequence of the other axioms.

If $f:X\rightarrow Y$ is an admissible
monomorphism, by the axiom
($\mathbf{Ex}$\ref{Axiom:ex epi et mono dans
une suite exacte}), the morphism $f$ admits a
cokernel which we shall note $Y/X$. The pair
$(X,f)$ is called an {\it admissible
subobject} of $Y$.

After \cite{Quillen73}, if $\mathcal C$ is an
Abelian category and if $\mathcal E$ is the
class of all exact sequences in $\mathcal C$,
then $(\mathcal C,\mathcal E)$ is an exact
category. Furthermore, any exact category can
be naturally embedded (through the additive
version of Yoneda's functor) into an Abelian
category.

The following result is important for the
axiom ({\bf A}\ref{Axiome:A recollement}) in
Definition \ref{Def:categorie exacte
arithematque} below.

\begin{proposition}\label{Pro:decomposition canonique}
Let $(\mathcal C,\mathcal E)$ be an exact
category and $f:X\rightarrow Y$ be a morphism
in $\mathcal C$.
\begin{enumerate}[1)]
\item The diagram
\[\xymatrix{\relax 0\ar[r]&X\ar[r]^-{(\Id_X,f)}&X\amalg Y\ar[rr]^-{f\circ\pr_1-\pr_2}&&Y\ar[r]&0}\]
is in $\mathcal E$.
\item The morphism $f$ factorizes as $f=\pr_2\circ(\Id_X,f)$. Furthermore, the second projection $\pr_2:X\amalg Y\rightarrow Y$ is an admissible epimorphism and $(\Id_X,f):X\rightarrow X\amalg Y$ is an admissible monomorphism.
\end{enumerate}
\end{proposition}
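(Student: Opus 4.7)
My plan is to deduce part 1) from axioms ($\mathbf{Ex}$2) and ($\mathbf{Ex}$3) by exhibiting the given diagram as isomorphic to the split short exact sequence $0 \to X \xrightarrow{(\Id_X, 0)} X \amalg Y \xrightarrow{\pr_2} Y \to 0$, which lies in $\mathcal E$ by ($\mathbf{Ex}$2). Part 2) will then fall out immediately: the factorization $f = \pr_2 \circ (\Id_X, f)$ is just the universal property of the product; $\pr_2$ is an admissible epimorphism by ($\mathbf{Ex}$2); and $(\Id_X, f)$ is an admissible monomorphism by the very definition, once part 1) is established.

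The core construction is a shearing automorphism $\phi \colon X \amalg Y \to X \amalg Y$ that twists $(\Id_X, 0)$ into $(\Id_X, f)$. Using the additive structure on hom-sets (inherent to any Quillen exact category), I define $\phi$ by prescribing its two components via $\pr_1 \circ \phi = \pr_1$ and $\pr_2 \circ \phi = f \circ \pr_1 - \pr_2$. A direct component-level check gives $\phi \circ \phi = \Id_{X \amalg Y}$, so $\phi$ is an involutive automorphism. Two further verifications using the universal property of the product then yield $\phi \circ (\Id_X, 0) = (\Id_X, f)$ (since the components are $\Id_X$ and $f \circ \Id_X - 0 = f$) and $(f \pr_1 - \pr_2) \circ \phi = \pr_2 \circ \phi \circ \phi = \pr_2$.

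These two identities say exactly that the triple $(\Id_X, \phi, \Id_Y)$ is an isomorphism of candidate short sequences from the split sequence above to the sequence of part 1); axiom ($\mathbf{Ex}$3) then places the latter in $\mathcal E$. The main conceptual hurdle is simply to spot the shearing automorphism $\phi$ and to pick a sign convention that makes it its own inverse, so that the right-hand vertical map of the comparison diagram is $\Id_Y$ rather than $-\Id_Y$; after that, everything reduces to a short mechanical computation with $\pr_1$ and $\pr_2$.
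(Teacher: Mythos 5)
Your proof is correct, but it follows a genuinely different route from the paper's. The paper proves directly that $w=f\circ\pr_1-\pr_2$ is a cokernel of $u=(\Id_X,f)$ (writing an arbitrary $\alpha:X\amalg Y\rightarrow S$ as $\alpha_1\circ\pr_1-\alpha_2\circ\pr_2$ and checking the universal property), and then concludes that $u$ is an admissible monomorphism by applying the axiom ($\mathbf{Ex}$7) to the retraction $\pr_1$, since $\pr_1\circ u=\Id_X$ is an admissible monomorphism; part 1) then follows implicitly from uniqueness of cokernels together with ($\mathbf{Ex}$3). You instead build the involutive shearing automorphism $\phi$ of $X\amalg Y$ with $\pr_1\circ\phi=\pr_1$ and $\pr_2\circ\phi=f\circ\pr_1-\pr_2$, and transport the split sequence of ($\mathbf{Ex}$2) to the desired one via $(\Id_X,\phi,\Id_Y)$ and ($\mathbf{Ex}$3). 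Your argument only needs the additive structure plus ($\mathbf{Ex}$2) and ($\mathbf{Ex}$3), so it bypasses the ``obscure axiom'' ($\mathbf{Ex}$7) entirely --- which is aesthetically pleasant given that, as the paper itself remarks after the definition, Keller showed ($\mathbf{Ex}$7) to be redundant --- and it yields part 1) in one stroke rather than as an afterthought. What the paper's route buys in exchange is the explicit identification of $f\circ\pr_1-\pr_2$ as the cokernel of $(\Id_X,f)$, verified by hand, which is the statement it actually quotes later. (A small remark: your care about the sign convention is not essential, since even with the other convention the right-hand vertical map $-\Id_Y$ would still be an isomorphism and ($\mathbf{Ex}$3) would still apply; but making $\phi$ involutive does keep the computation tidy.)
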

\begin{proof}
Let $Z=X\amalg Y$. Consider the morphisms
$u=(\Id_X,f):X\rightarrow Z$ and
$v=\pr_2:Z\rightarrow Y$. Clearly we have
$vu=f$. Moreover, after the axiom ({\bf
Ex}\ref{Axiom:ex somme direct}), $v$ is an
admissible epimorphism. Therefore it suffices
to verify that $u$ is an admissible
monomorphism. Consider the morphism
$w=f\circ\pr_1-\pr_2:Z\rightarrow Y$. We
shall prove that $w$ is the cokernel of $u$.
First we have $wu=0$. Furthermore, any
morphism $\alpha:Z\rightarrow S$ can be
written in the form
$\alpha=\alpha_1\circ\pr_1-\alpha_2\circ\pr_2:Z\rightarrow
S$, where $\alpha_1\in\Hom(X,S)$ and
$\alpha_2\in\Hom(Y,S)$. If $\alpha u=0$, we
have $\alpha_2f=\alpha_1$, i.e., the diagram
\[\xymatrix{\relax X\ar[r]^-u&X\amalg Y\ar[r]^-w\ar[rd]_-\alpha&Y\ar@{.>}[d]^-{\alpha_2}\\
&&S}\] is commutative. Finally, if
$\beta:Y\rightarrow S$ satisfies $\beta
w=\beta f-\beta=\alpha$, then
$\beta=\alpha_2$. Therefore, we have proved
that $u$ has a cokernel.

Since the composition of moprhisms
$\xymatrix{X\ar[r]^u&Z\ar[r]^{\pr_1}&X}$ is
the identity morphism $\Id_X$, which is an
admissible monomorphism, we know, thanks to
axiom ({\bf Ex}\ref{Axiom: ex composition ep
mon implie un epi mono}), that $u$ is also an
admissible monomorphism.
\end{proof}

We now introduce the notion of arithmetic
exact categories. As explained above, an
arithmetic exact category is an exact
category where each object is equipped with a
set of ``arithmetic structures'', subject to
some compatibility conditions (axioms ({\bf
A}\ref{Axiome:A A(0) est singleton}) ---
({\bf A}\ref{Axiome:A carre cartesien})).
Finally, the axiom ({\bf A}\ref{Axiome:A
recollement}) shall be used to describe
morphisms compatible with arithmetic
structures.

\begin{definition}
\label{Def:categorie exacte arithematque} Let
$(\mathcal C,\mathcal E)$ be an exact
category. We call {\it arithmetic structure}
on $(\mathcal C,\mathcal E)$ the following
data:
\begin{enumerate}[1)]
\item a mapping $A$ from $\obj\mathcal C$ to the class of
sets,
\item for any admissible monomorphism $f:X\rightarrow
Y$, a mapping $f^*:A(Y)\rightarrow A(X)$,
\item for any admissible epimorphism $g:X\rightarrow
Y$, a mapping $g_*:A(X)\rightarrow A(Y)$,
\end{enumerate}
subject to the following axioms:
\begin{enumerate}[({\bf A}1)]
\item\label{Axiome:A A(0) est singleton} $A(0)$ is a one-point set,
\item\label{Axiome:A fontorialite limage reciproque} if $\xymatrix{X\ar@{ >->}[r]^{i}&Y\ar@{ >->}[r]^{j}&Z}$ are admissible monomorphisme, we have $(ji)^*=i^*j^*$,
\item\label{Axiome:A fonctorialite limage directe} if  $\xymatrix{X\ar@{->>}[r]^p&Y\ar@{->>}[r]^q&Z}$ are admissible epimorphisms, we have $(qp)_*=q_*p_*$,
\item\label{Axiome:A identigy} for any object $X$ of $\mathcal C$, $\Id_X^*=\Id_{X*}=\Id_{A(X)}$,
\item \label{Axiome:A iosmorphisme} if $f:X\rightarrow Y$ is an isomorphism, we have
$f^*f_*=\Id_{A(X)}$ and $f_*f^*=\Id_{A(Y)}$,
\item\label{Axiome:A carre cartesien} for any cartesian or\footnote{Here we can prove that the square is actually cartesian {\bf and} cocartesian.} cocartesian square
\begin{equation}\label{Equ:A carre cartesien}\xymatrix{\relax X\ar@{
>->}[r]^-u\ar@{->>}[d]_p&
Y\ar@{->>}[d]^-q\\
Z\ar@{ >->}[r]_-v&W}\end{equation} in
$\mathcal C$, where $u$ and $v$ (resp. $p$
and $q$) are admissible monomorphisms (resp.
admissible epimorphisms), we have
$v^*q_*=p_*u^*$,
\item\label{Axiome:A recollement}
if $\xymatrix{X\ar@{->>}[r]^u&Y\ar@{
>->}[r]^v&Z}$ is a diagram in $\mathcal C$
where $u$ (resp. $v$) is an admissible
epimorphism (resp. admissible monomorphism)
and if $(h_X,h_Z)\in A(X)\times A(Z)$
satisfies $u_*(h_X)=v^*(h_Z)$, then there
exists $h\in A(X\amalg Z)$ such that
$(\Id,vu)^*(h)=h_X$ and that
$\pr_{2*}(h)=h_Z$.
\end{enumerate}
The triplet $(\mathcal C,\mathcal E,A)$ is
called an {\it arithmetic exact category}.
For any object $X$ of $\mathcal C$, we call
{\it arithmetic structure} on $X$ any element
$h$ in $A(X)$. The pair $(X,h)$ is called an
{\it arithmetic object} in $(\mathcal
C,\mathcal E,A)$. If $p:X\rightarrow Z$ is an
admissible epimorphism, $p_*(h)$ is called
the {\it quotient arithmetic structure} on
$Z$. If $i:Y\rightarrow X$ is an admissible
monomorphism, $i^*h$ is called the {\it
induced arithmetic structure} on $Y$.
$(Z,p_*(h))$ is called an {\it arithmetic
quotient} and $(Y,i^*(h)$ is called an {\it
arithmetic subobject} of $(X,h)$.
\end{definition}

Let $(\mathcal C,\mathcal E)$ be an exact
category. If for any object $X$ of $\mathcal
C$, we denote by $A(X)$ a one point set, and
we define induced and quotient arithmetic
structure in the obvious way, then $(\mathcal
C,\mathcal E,A )$ becomes an arithmetic exact
category. The arithmetic structure $A$ is
called the {\it trivial arithmetic structure}
on the exact category $(\mathcal C,\mathcal
E)$. Therefore, exact categories can be
viewed as {\it trivial} arithmetic exact
categories.

Let $(\mathcal C,\mathcal E,A)$ be an
arithmetic exact category. If $(X',h')$ and
$(X'',h'')$ are two arithmetic objects in
$(\mathcal C,\mathcal E,A)$, we say that a
morphism $f:X'\rightarrow X''$ in $\mathcal
C$ is {\it compatible} with arithmetic
structures if there exists an arithmetic
object $(X,h)$, an admissible monomorphism
$u:X'\rightarrow X$ and an admissible
epimorphism $v:X\rightarrow X''$ such that
$h'=u^*(h)$ and that $h''=v_*(h)$.

From the definition of morphisms compatible
with arithmetic structures, we obtain the
following assertions:
\begin{enumerate}[1)]
\item If $(X_1,h_1)$ and $(X_2,h_2)$
are two arithmetic objects and if
$f:X_1\rightarrow X_2$ is an admissible
monomorphism (resp. admissible epimorphism)
such that $f^*h_2=h_1$ (resp. $f_*h_1=h_2$),
then $f$ is compatible with arithmetic
structures.
\item If $(X_1,h_1)$ and $(X_2,h_2)$
are two arithmetic objects and if
$f:X_1\rightarrow X_2$ is the zero morphism,
then $f$ is compatible with arithmetic
structures.
\item The composition of two morphisms compatible
with arithmetic structure is also compatible
with arithmetic structure. This is a
consequence of the axiom ({\bf
A}\ref{Axiome:A recollement}).
\end{enumerate}

Let $(\mathcal C,\mathcal E,A)$ be an
arithmetic exact category. After the argument
3) above, all arithmetic objects in
$(\mathcal C,\mathcal E,A )$ and morphisms
compatible with arithmetic structures form a
category which we shall denote by $\mathcal
C_A$. In the following, in order to simplify
the notations, we shall use the symbol
$\overline X$ to denote an arithmetic object
$(X,h)$ if there is no ambiguity on the
arithmetic structure $h$.

Let $(\mathcal C,\mathcal E)$ be an exact
category. Suppose that $(A_i)_{i\in I}$ is a
family of arithmetic structure on $(\mathcal
C,\mathcal E )$. For any object $X$ in
$\mathcal C$, let $A(X)=\prod_{i\in
I}A_i(X)$. Suppose that $h=(h_i)_{i\in I}$ is
an element in $A(X)$. For any admissible
monomorphism $u:Y\rightarrow X$, we define
$u^*h:=(u^*h_i)_{i\in I}\in A(Y)$; for any
admissible epimorphism $\pi:X\rightarrow Z$,
we define $\pi_*h=(\pi_*h_i)_{i\in I}\in
A(Z)$. Then it is not hard to show that $A$
is an arithmetic structure on $(\mathcal
C,\mathcal E)$. We say that $A$ is the {\it
product arithmetic structure} of $(A_i)_{i\in
I}$, denoted by $\prod_{i\in I}A_i$.

We now give some examples of arithmetic exact
categories.

\subsection*{Hermitian
spaces}

\hskip\parindent Let $\mathbf{Vec}_{\mathbb
C}$ be the category of finite dimensional
vector spaces over $\mathbb C$. It is an
Abelian category. Let $\mathcal E$ be the
class of short exact sequences of finite
dimensional vector spaces.  For any finite
dimensional $\mathbb C$-vector space $E$ over
$\mathbb C$, let $A(E)$ be the set of all
Hermitian metrics on $E$. Suppose that $h$ is
a Hermitian metric on $E$. If
$i:E_0\rightarrow E$ is a subspace of $E$, we
denote by $i^*(h)$ the induced metric on
$E_0$. If $\pi:E\rightarrow F$ is a quotient
space of $E$, we denote by $\pi_*(h)$ the
quotient metric on $F$. Then
$(\mathbf{Vec}_{\mathbb C},\mathcal E,A)$ is
an arithmetic exact category. In fact, the
axioms ({\bf A}\ref{Axiome:A A(0) est
singleton}) --- ({\bf A}\ref{Axiome:A carre
cartesien}) are easily verified. The
verification of the axiom ({\bf
A}\ref{Axiome:A recollement}) relies on the
following proposition.
\begin{proposition}\label{Pro:inverse of projection - inclusion}
Let $E$, $F_0$ and $F$ be Hermitian spaces
such that $F_0$ is a quotient Hermitian space
of $E$ and a Hermitian subspace of $F$. We
denote by $\pi:E\rightarrow F_0$ the
projection of $E$ onto $F_0$ and by
$i:F_0\rightarrow F$ the inclusion and we
note $\varphi=i\pi$. Then there exists a
Hermitian metric on $E\oplus F$ such that in
the diagram
\[\xymatrix{\relax 0\ar[r]&E\ar[r]^-{(\Id,\varphi)}&E\oplus
F \ar[r]^-{\pr_2}&F\ar[r]&0},\]
$(\Id,\varphi):E\rightarrow E\oplus F$ is an
inclusion and $\pr_2:E\oplus F\rightarrow F$
is a projection of Hermitian spaces.
\end{proposition}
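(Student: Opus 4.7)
The plan is to construct the required Hermitian metric on $E \oplus F$ explicitly, by twisting the orthogonal direct sum metric by a section of $\pi$. Concretely, I fix orthogonal decompositions $E = \ker\pi \oplus E_2$ with $E_2 := (\ker\pi)^{\perp}$ and $F = F_0 \oplus G$ with $G := F_0^{\perp}$. The hypothesis that $F_0$ carries the quotient Hermitian metric from $E$ is equivalent to saying that $\pi|_{E_2}: E_2 \to F_0$ is an isometric isomorphism; let $\sigma : F_0 \to E$ be the composition of its inverse with the inclusion $E_2 \hookrightarrow E$, let $Q : F \to F_0$ denote the orthogonal projection (so that $Q\,i = \Id_{F_0}$), and set $s := \sigma \circ Q : F \to E$.

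Next I define on $E \oplus F$ the form
\[\|(x,y)\|^2 := \|x - s(y)\|_E^2 + \|y\|_F^2,\]
which is positive definite and comes from a Hermitian inner product, being the pullback of the standard orthogonal direct sum inner product under the linear automorphism $\beta : (x,y) \mapsto (x - s(y), y)$.

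It remains to verify the two compatibilities. Writing $x = x_1 + x_2$ with $x_1 \in \ker\pi$ and $x_2 \in E_2$, we have $\varphi(x) = i\pi(x_2) \in F_0$, hence $s\varphi(x) = \sigma\pi(x_2) = x_2$; consequently
\[\|(x,\varphi(x))\|^2 = \|x_1\|_E^2 + \|i\pi(x_2)\|_F^2 = \|x_1\|_E^2 + \|\pi(x_2)\|_{F_0}^2 = \|x_1\|_E^2 + \|x_2\|_E^2 = \|x\|_E^2,\]
which shows that $(\Id,\varphi)$ is an isometric embedding. For the quotient metric, given $y \in F$,
\[\inf_{x \in E}\bigl(\|x - s(y)\|_E^2 + \|y\|_F^2\bigr) = \|y\|_F^2,\]
with the infimum attained at $x = s(y)$; hence $\pr_2$ carries the quotient Hermitian structure to the given one on $F$.

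The delicate point is that the two required compatibilities compete against each other. Setting $s = 0$ (the naive orthogonal direct sum metric) makes $\pr_2$ a Hermitian quotient but breaks the isometry on the graph of $\varphi$; conversely, the ``graph metric'' $\|x\|_E^2 + \|y - \varphi(x)\|_F^2$ restores the isometry on the graph but shrinks the quotient norm on $F_0 \subset F$ by a factor $1/\sqrt{2}$. The essential use of the hypothesis is precisely that the induced and quotient metrics on $F_0$ agree, so that the orthogonal section $\sigma$ is well defined and the correction term $s(y)$ simultaneously absorbs the discrepancy in both directions.
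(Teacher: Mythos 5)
Your proof is correct and takes essentially the same route as the paper: your correction map $s=\sigma\circ Q$ is exactly the adjoint $\varphi^{*}=\pi^{*}i^{*}$ that the paper uses (under the hypothesis, $\pi^{*}$ identifies $F_0$ isometrically with $(\Ker\pi)^{\perp}$ and $i^{*}$ is the orthogonal projection $F\rightarrow F_0$), so your metric $\|(x,y)\|^2=\|x-s(y)\|_E^2+\|y\|_F^2$ coincides with the paper's $\|x-\varphi^{*}y\|_E^2+\|y\|_F^2$. The only difference is presentational, replacing the adjoint computation by explicit orthogonal decompositions, and both verifications agree.
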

\begin{proof}
Suppose that $E\oplus F$ is equipped with the
Hermitian metric $\|\cdot\|$ such that for
any $(x,y)\in E\oplus F$,
\[\|(x,y)\|^2=\|x-\varphi^*y\|_E^2+\|y\|_F^2\]
where $\|\cdot\|_E$ and $\|\cdot\|_F$ are
Hermitian metrics on $E$ and on $F$
respectively.  Clearly with this metric,
$\pr_2:E\oplus F\rightarrow F$ is a
projection of Hermitian spaces. Moreover,
$\pi^*$ is the identification of $F_0$ to
$(\Ker\pi)^{\perp}$, $i^*$ is the orthogonal
projection of $F$ onto $F_0$. Therefore,
$\varphi^*\varphi:E\rightarrow E$ is the
orthogonal projection of $E$ onto
$(\Ker\pi)^{\perp}$. Hence, for any vector
$x\in E$, we have
\[\|(x,\varphi(x))\|^2=\|x-\varphi^*\varphi(x)\|_E^2
+\|\varphi(x)\|_F^2=\|x\|_E^2.\]
\end{proof}

The assertion above works also in Hilbert
spaces case with the same choice of metric.
Furthermore, it can be generalized to the
family case. Suppose that $X$ is a space ringed in
$\mathbb R$-algebra (resp. smooth manifold)
and $E$, $F_0$ and $F$ are locally free
$\mathcal O_{X,\mathbb C}$-modules of finite
rank such that $F_0$ is a quotient of $E$ and
a submodule of $F$. We denote by $\varphi$ the canoical homomorphism defined by the composition of the projection from $E$ to $F_0$ and the inclusion of $F_0$ into $F$. If
$E$ and $F$ are equipped with continuous
(resp. smooth) Hermitian metrics such that
for any point $x\in X$, the quotient metric
on $F_{0,x}$ by the projection of $E_x$
coincides with the metric induced from that
of $F_x$, then there exists a continuous
(resp. smooth) Hermitian metric on $E\oplus
F$ such that for any point $x\in X$, the
graph of $\varphi_x$ defines an inclusion of
Hermitian spaces and the second projection
$E_x\oplus F_x\rightarrow F_x$ is a
projection of Hermitian spaces.

The arithmetic objects in
$(\mathbf{Vec}_{\mathbb C},\mathcal E,A)$ are
nothing other than Hermitian spaces. From
definition we see immediately that if a
linear mapping $\varphi:E\rightarrow F$ of
Hermitian spaces is compatible with
arithmetic structure, then the norm of
$\varphi$ must be smaller or equal to $1$.
The following proposition shows that the
converse is also true.

\begin{proposition}\label{Pro:decomposition de morphisme de petit norm}
Let $\varphi:E\rightarrow F$ be a linear map
of Hermitian spaces. If $\|\varphi\|\le 1$,
then there exists a Hermitian metric on
$E\oplus F$ such that in the decomposition
$\xymatrix{\relax
E\ar[r]^-{(\Id,\varphi)}&E\oplus
F\ar[r]^-{\pr_2}& F}$ of $\varphi$,
$(\Id,\varphi)$ is an inclusion of Hermitian
spaces and $\pr_2$ is a projection of
Hermitian spaces.
\end{proposition}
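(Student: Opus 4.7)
The plan is to split $\varphi$ into an isometric part and a strictly contractive part, apply Proposition \ref{Pro:inverse of projection - inclusion} to the first, treat the second with an explicit formula, and glue the two via an orthogonal direct sum. Since $\|\varphi\|\le 1$, the self-adjoint operator $\varphi^*\varphi$ satisfies $0\le\varphi^*\varphi\le\Id_E$, so $E$ decomposes orthogonally as $E=E_1\oplus E_2$ with $E_1:=\Ker(\Id_E-\varphi^*\varphi)$ the maximal subspace on which $\varphi$ is isometric. Both summands are $\varphi^*\varphi$-stable, so for $x_1\in E_1$, $x_2\in E_2$ one has $\langle\varphi x_1,\varphi x_2\rangle_F=\langle x_1,\varphi^*\varphi x_2\rangle_E=0$. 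Setting $F_1:=\varphi(E_1)$ and $F_2:=F_1^\perp$, this gives $\varphi(E_2)\subset F_2$, so $\varphi$ decomposes as an orthogonal direct sum $\varphi_1\oplus\varphi_2:E_1\oplus E_2\to F_1\oplus F_2$ with $\varphi_1$ an isometric isomorphism and $\varphi_2$ a strict contraction ($\|\varphi_2 x\|<\|x\|$ for $x\in E_2\setminus\{0\}$, since the eigenvalue $1$ of $\varphi^*\varphi$ lives entirely in $E_1$).

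For the isometric summand, Proposition \ref{Pro:inverse of projection - inclusion} applied with $F_0=F_1$ (which is trivially both a Hermitian quotient of $E_1$ via $\varphi_1$ and a Hermitian subspace of itself) produces a Hermitian metric on $E_1\oplus F_1$ for which $(\Id,\varphi_1):E_1\to E_1\oplus F_1$ is a Hermitian inclusion and $\pr_2:E_1\oplus F_1\to F_1$ is a Hermitian projection.

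For the strictly contractive summand, I would equip $E_2\oplus F_2$ with the form
\[\|(x,y)\|^2:=\|x\|_{E_2}^2-\|\varphi_2 x\|_{F_2}^2+\|y\|_{F_2}^2.\]
The first two terms equal $\langle(\Id_{E_2}-\varphi_2^*\varphi_2)x,x\rangle$, and the operator $\Id_{E_2}-\varphi_2^*\varphi_2$ is positive definite on $E_2$ by the spectral analysis above, so this is a genuine Hermitian inner product. A direct computation gives $\|(x,\varphi_2 x)\|^2=\|x\|^2$ and $\inf_x\|(x,y)\|^2=\|y\|^2$ (attained at $x=0$), hence $(\Id,\varphi_2)$ is a Hermitian inclusion and $\pr_2$ is a Hermitian projection.

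Finally, I equip $E\oplus F=(E_1\oplus F_1)\oplus(E_2\oplus F_2)$ with the orthogonal direct sum of the two metrics above. Under the canonical identification, $(\Id,\varphi)$ becomes $(\Id,\varphi_1)\oplus(\Id,\varphi_2)$ and $\pr_2$ becomes the direct sum of the corresponding projections, so both inherit the desired inclusion/projection properties from the two summands. The main subtlety to anticipate is that the tempting single formula $\|(x,y)\|^2=\|x\|_E^2-\|\varphi x\|_F^2+\|y\|_F^2$ is only positive semi-definite, degenerating precisely along $E_1\oplus\{0\}$; this degeneracy is exactly what forces the decomposition and the use of the previous proposition on the isometric summand.
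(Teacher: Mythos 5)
Your proof is correct, but it takes a genuinely different route from the paper's. The paper works with a single global construction: it introduces $P=(\Id_E-\varphi^*\varphi)^{1/2}$ and $Q=(\Id_F-\varphi\varphi^*)^{1/2}$, checks the intertwining relations $\varphi^*Q=P\varphi^*$ and $Q\varphi=\varphi P$, builds the self-adjoint involution $R=\begin{pmatrix}P&\varphi^*\\ \varphi&-Q\end{pmatrix}$ (an isometry for the orthogonal sum metric), and then corrects the eigenvalue-$1$ directions of $\varphi^*\varphi$ by an auxiliary invertible operator $S=\begin{pmatrix}B&\varphi^*\\ 0&\Id_F\end{pmatrix}$, defining the new metric as the pullback of the orthogonal sum metric by $S^{-1}$; the verification rests on $SRu=(\Id,\varphi)$ and on the $S$-stability of $\Ker\pr_2$. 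You instead split $E=E_1\oplus E_2$ spectrally into the isometric part $E_1=\Ker(\Id_E-\varphi^*\varphi)$ and its orthogonal complement, observe that $\varphi$ decomposes as $\varphi_1\oplus\varphi_2$ with $F_1=\varphi(E_1)\perp\varphi(E_2)$, handle the isometric block by Proposition \ref{Pro:inverse of projection - inclusion} and the strictly contractive block by the explicit positive-definite form $\langle(\Id_{E_2}-\varphi_2^*\varphi_2)x,x\rangle+\|y\|_{F_2}^2$, and glue by orthogonal direct sum; each verification (positive definiteness, isometry on the graph, quotient metric via the infimum over the fibre) is sound, and the standard compatibility of induced and quotient metrics with orthogonal direct sums finishes the argument. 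Your version is more elementary and pinpoints exactly where the naive formula $\|x\|_E^2-\|\varphi x\|_F^2+\|y\|_F^2$ degenerates, which is a genuine clarification; what the paper's heavier construction buys is a closed formula for the metric on all of $E\oplus F$ at once, which is what it invokes when remarking that the strict case $\|\varphi\|<1$ extends to continuous or smooth families --- your spectral decomposition of $E$ does not vary continuously with $\varphi$, though in the strict case your formula reduces to the naive one and is equally family-friendly.
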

\begin{proof}
Since $\|\varphi\|\le 1$, we have
$\|\varphi^{*}\|\le 1$. Therefore, we obtain
the inequalities $\|\varphi^{*}\varphi\|\le
1$ and $\|\varphi\varphi^{*}\|\le 1$. Hence
$\Id_E-\varphi^{*}\varphi$ and
$\Id_F-\varphi\varphi^{*}$ are Hermitian
endomorphisms with positive eigenvalues. So
there exist two Hermitian endomorphisms with
positive eigenvalues $P$ and $Q$ of $E$ and
$F$ respectively such that
$P^2=\Id_E-\varphi^{*}\varphi$ and
$Q^2=\Id_F-\varphi\varphi^{*}$.

If $x$ is an eigenvector of
$\varphi\varphi^{*}$ associated to the
eigenvalue $\lambda$, then $\varphi^{*}x$ is
an eigenvector of $\varphi^{*}\varphi$
associated to the same eigenvalue. Therefore
$\varphi^{*}Qx=\sqrt{1-\lambda}\varphi^{*}x=P\varphi^{*}x$.
As $F$ is generated by eigenvectors of
$\varphi\varphi^*$, we have
$\varphi^{*}Q=P\varphi^*$. For the same
reason we have $Q\varphi=\varphi P$. Let
$R=\begin{pmatrix}
P&\varphi^*\\
\varphi&-Q
\end{pmatrix}$. As $R$ is clearly Hermitian, and
verifies \[R^2=\begin{pmatrix}
P^2+\varphi^*\varphi&P\varphi^{*}-\varphi^{*}Q\\
\varphi P-Q\varphi& \varphi\varphi^{*}+Q^2
\end{pmatrix}=\Id_{E\oplus F},\]
it is an isometry for the orthogonal
sum metric on $E\oplus F$. Let
$u:E\rightarrow E\oplus F$ be the mapping
which sends $x$ to
$\begin{pmatrix}x\\0\end{pmatrix}$. The
diagram
\[\xymatrix{\relax E\ar[r]^-{\varphi}\ar[d]_{u}& F\\
 E\oplus F\ar[r]_{R}& E\oplus
F\ar[u]_{\;\pr_2}}\] is commutative. The
endomorphism $\varphi^*\varphi$ is
auto-adjoint, there exists therefore an
orthonormal base $(x_i)_{1\le i\le n}$ of $E$
such that $\varphi^*\varphi
x_i=\lambda_ix_i$. Suppose that $0\le
\lambda_j<1$ for any $1\le j\le k$ and that
$\lambda_j=1$ for any $k<j\le n$. Let
$B:E\rightarrow E$ be the $\mathbb C$-linear
mapping such that
$B(x_j)=\sqrt{1-\lambda_j}x_j$ for $1\le j\le
k$ and that $B(x_j)=x_j$ for $j>k$.
Define $S=\begin{pmatrix} B&\varphi^*\\
0&\Id_F
\end{pmatrix}:E\oplus F\rightarrow E\oplus F$.
Since
$Ru=\begin{pmatrix}{P}\\{\varphi}\end{pmatrix}$
and since
\[(BP+\varphi^*\varphi)(x_i)=\sqrt{1-\lambda_i}
Bx_i+\lambda_i x_i=\begin{cases}
(1-\lambda_i)x_i+\lambda_ix_i=x_i,&1\le i\le
k,\\
0Bx_i+x_i=x_i,&k<i\le n,
\end{cases}\] the diagram
\[\xymatrix @R=1em @C=3pc {& E\oplus F\ar[dd]^S\ar[rd]^{\pr_2}\\
E\ar[ru]^{Ru}\ar[rd]_{\tau}&&F\\
&E\oplus F\ar[ru]_{\pr_2}}\] is commutative,
where $\tau=\begin{pmatrix}\Id_E\\
\varphi
\end{pmatrix}$. We equip $E\oplus F$
with the Hermitian product
$\left<\cdot,\cdot\right>_0$ such that, for
any $(\alpha,\beta)\in (E\oplus F)^2$, we
have
\[\left<\alpha,\beta\right>_0=\left<S^{-1}\alpha,
S^{-1}\beta\right>,\] where
$\left<\cdot,\cdot\right>$ is the orthogonal
direct sum of Hermitian products on $E$ and
on $F$. Then for any $(x,y)\in E\times E$,
\[\left<\tau(x),\tau(y)\right>_0=
\left<SRu(x),SRu(y)\right>_0=
\left<Ru(x),Ru(y)\right>=\left<u(x),u(y)\right>
=\left<x,y\right>.\] Finally, the kernel of
$\pr_2$ is stable by the action of $S$, so
the projections of
$\left<\cdot,\cdot\right>_0$ and of
$\left<\cdot,\cdot\right>$ by $\pr_2$ are the
same.
\end{proof}

From the proof of Proposition
\ref{Pro:decomposition de morphisme de petit
norm}, we see that a weaker form (the case
where $\|\varphi\|<1$) can be generalized to
the family case, no matter the family of
Hermitian metrics is continuous or smooth.

\subsection*{Ultranormed space}

\hskip\parindent Let $k$ be a field equipped
with a non-Archimedean absolute value
$\|\cdot\|$ under which $k$ is complete. We
denote by $\mathbf{Vec}_k$ the category of
finite dimensional vector spaces over $k$,
which is clearly an Abelian category. Let
$\mathcal E$ be the class of short exact
sequence in $\mathbf{Vec}_k$. For any finite
dimension vector space $E$ over $k$, we
denote by $A(E)$ the set of all ultranorms
(see \cite{Bourbaki81} for definition) on
$E$. Suppose that $h$ is an ultranorm on $E$.
If $i:E_0\rightarrow E$ is a subspace of $E$,
we denote by $i^*(h)$ the induced ultranorm
on $E_0$. If $\pi:E\rightarrow F$ is a
quotient space of $E$, we denote by
$\pi_*(h)$ the quotient ultranorm on $F$.
Then $(\mathbf{Vec}_k,\mathcal E,A)$ is an
arithmetic exact category. In particular, the
axiom ({\bf A}\ref{Axiome:A recollement}) is
justified by the following proposition, which
can be generalized without any difficulty to
Banach space case or family case.

\begin{proposition}
Let $\varphi:E\rightarrow F$ be a linear map
of vector spaces over $k$. Suppose that $E$
and $F$ are equipped respectively with the
ultranorms $h_E$ and $h_F$ such that
$\|\varphi\|\le 1$. If we equip $E\oplus F$
with the ultranorm $h$ such that, for any
$(x,y)\in E\oplus F$,
$h(x,y)=\max(h_E(x),h_F(y))$, then in the
decomposition $\xymatrix{\relax
E\ar[r]^-{(\Id,\varphi)}&E\oplus
F\ar[r]^-{\pr_2}&F}$ of $\varphi$, we have
$(\Id,\varphi)^*(h)=h_E$ and
$\pr_{2*}(h)=h_F$.
\end{proposition}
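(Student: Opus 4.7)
The plan is to verify in turn that (i) the prescribed $h$ really is an ultranorm on $E\oplus F$, (ii) its pullback along $(\Id,\varphi)$ recovers $h_E$, and (iii) its pushforward along $\pr_2$ recovers $h_F$. All three reduce to a one-line computation using the definition of induced/quotient ultranorm together with the hypothesis $\|\varphi\|\le 1$.

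For (i), it suffices to check the ultrametric triangle inequality: for $(x_1,y_1),(x_2,y_2)\in E\oplus F$,
\[h\bigl((x_1,y_1)+(x_2,y_2)\bigr)=\max\bigl(h_E(x_1+x_2),h_F(y_1+y_2)\bigr)\le\max\bigl(h(x_1,y_1),h(x_2,y_2)\bigr),\]
which follows from the ultrametric inequality for $h_E$ and $h_F$ together with the fact that $\max$ is monotone and commutes with itself. Non-degeneracy and absolute homogeneity are immediate from those of $h_E,h_F$.

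For (ii), the ultranorm induced on $E$ through the monomorphism $(\Id,\varphi)$ is by definition the pullback of $h$, so
\[(\Id,\varphi)^*(h)(x)=h(x,\varphi(x))=\max\bigl(h_E(x),h_F(\varphi(x))\bigr).\]
Because $\|\varphi\|\le 1$ we have $h_F(\varphi(x))\le h_E(x)$, and the maximum collapses to $h_E(x)$.

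For (iii), the quotient ultranorm on $F$ along the epimorphism $\pr_2$ is by definition
\[\pr_{2*}(h)(y)=\inf_{(x,z):\,\pr_2(x,z)=y}h(x,z)=\inf_{x\in E}\max\bigl(h_E(x),h_F(y)\bigr).\]
Choosing $x=0$ shows this infimum is $\le h_F(y)$, while every term in the infimum is $\ge h_F(y)$, so equality holds. There is no substantive obstacle here; the only point that might seem delicate is the quotient computation, but it trivializes because $h$ is an orthogonal-style maximum rather than a sum, so the lift $x=0$ is admissible and already optimal.
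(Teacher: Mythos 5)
Your proof is correct and follows essentially the same route as the paper: the pullback identity is exactly the paper's computation $h(x,\varphi(x))=\max\bigl(h_E(x),h_F(\varphi(x))\bigr)=h_E(x)$ using $\|\varphi\|\le 1$, and the quotient identity, which the paper dismisses as clear from the definition, is just made explicit in your infimum computation with the lift $x=0$.
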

\begin{proof}
In fact, for any element $x\in E$,
$h(x,\varphi(x))=\max(h_E(x),h_F(\varphi(x)))=h_E(x)$
since $h_F(\varphi(x))\le\|\varphi\|h_E(x)\le
h_E(x)$. Furthermore, by definition it is
clear that $h_F=\pr_{2*}(h)$. Therefore the
proposition is true.
\end{proof}

\subsection*{Hermitian
vector bundles}

\hskip\parindent Let $K$ be a number field
and $\mathcal O_K$ be its integer ring. For
any scheme $\mathscr X$ of finite type and
flat over $\Spec\mathcal O_K$ such that
$\mathscr X_K$ is smooth, we denote by
$\mathbf{Vec}(\mathscr X)$ the category of
locally free modules of finite rank on
$\mathscr X$. If we denote by $\mathcal E$
the class of all short exact sequence of
coherent sheaves in $\mathbf{Vec}(\mathscr
X)$, then $(\mathbf{Vec}(\mathscr X),\mathcal
E )$ is an exact category. Let
$\Sigma_\infty$ be the set of all embeddings
of $K$ in $\mathbb C$. The space $\mathscr
X(\mathbb C)$ of complex points of $\mathscr
X$, which is a complex analytic manifold, can
be written as a disjoint union
\[\mathscr X(\mathbb C)=\coprod_{\sigma\in\Sigma_\infty}
\mathscr X_\sigma(\mathbb C),\] where
$\mathscr X_{\sigma}(\mathbb C)$ is the space
of complex points in $\mathscr
X\times_{\mathcal O_K,\sigma }\Spec\mathbb
C$. Notice that the complex conjugation of
$\mathbb C$ induces an involution
$F_\infty:\mathscr X(\mathbb
C)\rightarrow\mathscr X(\mathbb C)$ which
sends $\mathscr X_\sigma(\mathbb C)$ onto
$\mathscr X_{\overline\sigma}(\mathbb C)$.

We call {\it Hermitian vector bundle} on
$\mathscr X$ any pair $(E,h)$ where $E$ is an
object in $\mathbf{Vec}(\mathscr X)$ and
$h=(h_\sigma)_{\sigma\in\Sigma_\infty}$ is a
collection such that, for any
$\sigma\in\Sigma_\infty$, $h_\sigma$ is a
continuous Hermitian metric on
$E_\sigma(\mathbb C)$, $E_\sigma$ being
$E\otimes_{\mathcal O_K,\sigma}\mathbb C$,
subject to the condition that the collection
$h=(h_\sigma)_{\sigma\in\Sigma_\infty}$
should be invariant under the action of
$F_\infty$. The collection of Hermitian
metrics $h$ is called a {\it Hermitian
structure} on $E$. One can consult for
example \cite{Bost2001} and
\cite{Chambert} for details. If
$i:E_0\rightarrow E$ is an injective
homomorphism of $\mathcal O_{\mathscr
X}$-modules in $\mathbf{Vec}(\mathscr X)$, we
denote by $i^*(h)$ the collection of induced
metrics on $(E_{0,\sigma}(\mathbb
C))_{\sigma\in\Sigma_\infty}$; if
$\pi:E\rightarrow F$ is a surjective
homomorphism of $\mathcal O_{\mathscr
X}$-modules in $\mathbf{Vec}(\mathscr X)$, we
denote by $\pi_*(h)$ the collection of
quotient metric on $(F_\sigma(\mathbb
C))_{\sigma\in\Sigma_\infty}$. For any object
$E$ in $\mathbf{Vec}(\mathscr X)$, let $A(E)$
be the set of all Hermitian structures on
$E$. The family version of Proposition
\ref{Pro:inverse of projection - inclusion}
implies that $(\mathbf{Vec}(\mathscr
X),\mathcal E,A)$ is an arithmetic exact
category. The family version of Proposition
\ref{Pro:decomposition de morphisme de petit
norm} implies that, if $(E,h_E)$ and
$(F,h_F)$ are two Hermitian vector bundles
over $\mathscr X$ and if
$\varphi:E\rightarrow F$ is a homomorphism of
$\mathcal O_{\mathscr X }$-modules in
$\mathbf{Vec}(\mathscr X)$ such that, for any
$x\in \mathscr X(\mathbb C)$,
$\|\varphi_x\|<1$, then $\varphi$ is
compatible with arithmetic structures.

We say that a Hermitian structure
$h=(h_{\sigma})_{\sigma\in\Sigma_\infty}$ on
a vector bundle $E$ on $\mathscr X$ is {\it
smooth} if for any $\sigma\in\Sigma_\infty$,
$h_\sigma$ is a smooth Hermitian metric. For
any vector bundle $E$ on $\mathscr X$, let
$A_0(E)$ be the set of all smooth Hermitian
structures on $E$. Then
$(\mathbf{Vec}(\mathscr X),\mathcal E,A_0)$
is also an arithmetic exact category. If
$(E,h_E)$ and $(F,h_F)$ are two smooth
Hermitian vector bundles over $\mathscr X$,
then any homomorphism $\varphi:E\rightarrow
F$ which has norm $<1$ at every complex point
of $\mathscr X$ is compatible with arithmetic
structures.

\subsection*{Filtrations in an Abelian category}

\hskip\parindent Let $\mathcal C$ be an
essentially small Abelian category and
$\mathcal E$ be the class of short exact
sequences in $\mathcal C$. It is well known
that any finite projective limit (in
particular any fiber product) exists in
$\mathcal C$. Furthermore, any morphism in
$\mathcal C$ has an image, which is
isomorphic to the cokernel of its kernel, or
the kernel of its cokernel. For any object
$X$ in $\mathcal C$, we denote by $A(X)$ the
set\footnote{This is a set because $\mathcal
C$ is essentially small.} of isomorphism
classes of left continuous $I$-filtrations of
$X$, where $I$ is a totally ordered set, as
explained in the beginning of the second
section. For any left continuous
$I$-filtration $\mathcal F$ of $X$, we denote
by $[\mathcal F]$ the isomorphism class of
$\mathcal F$. If $u:X_0\rightarrow X$ is a
monomorphism, we define $u^*[\mathcal F]$ to
be the class of the inverse image
$u^*\mathcal F$. If $\pi:X\rightarrow Y$ is
an epimorphism, we define $\pi_*[\mathcal F]$
to be the class of the strong direct image
$\pi_*\mathcal F$.

We assert that $(\mathcal C,\mathcal E,A)$ is
an arithmetic exact category. In fact, the
axioms ({\bf A}\ref{Axiome:A A(0) est
singleton})
--- ({\bf A}\ref{Axiome:A iosmorphisme}) are
clearly satisfied.  We now verify the axiom
({\bf A}\ref{Axiome:A carre cartesien}).
Consider the diagram \eqref{Equ:A carre
cartesien} in Definition \ref{Def:categorie exacte arithematque}, which is the right sagittal
square of the following diagram
\eqref{Equ:verification of Axiom 4}. Suppose
given an $I$-filtration $\mathcal F$ of $Y$.
For any $i\in I$, we note $Y_i=\mathcal F(i)$
and we denote by $b_i:Y_i\rightarrow Y$ the
canonical monomorphism.
\begin{equation}\label{Equ:verification of Axiom 4}
\xymatrix{\relax &Z_i\ar@{
>->}[rr]^-{c_i}
\ar@{ >->}'[d]^-{\;v_i}[dd]&& Z\ar@{ >->}[dd]^-{v}\\
X_i\ar@{.>}[ru]^-{p_i}\ar@{
>->}[dd]_-{u_i} \ar@{
>->}[rr]_<<<<<<<<<{a_i}&& X\ar@{->>}[ru]_-{p}
\ar@{ >->}[dd]^<<<<<<<<<{u}\\
&W_i\ar@{ >->}'[r]^-{d_i}[rr]&&W\\
Y_i\ar@{->>}[ru]^-{q_i}\ar@{
>->}[rr]_-{b_i}&&Y \ar@{->>}[ru]_-{q}}\end{equation}
Let $d_i:W_i\rightarrow W$ be the image of
$qb_i$ in $W$ and $q_i:Y_i\rightarrow W_i$ be
the canonical epimorphism. Let
$(Z_i,c_i,v_i)$ be the fiber product of $v$
and $d_i$, and $(X_i,a_i,u_i)$ be the fiber
product of $u$ and $b_i$. Therefore, in the
diagram \eqref{Equ:verification of Axiom 4},
the two coronal square and the right sagittal
square are cartesian, the inferior square is
commutative. As
$vpa_i=qua_i=qb_iu_i=d_iq_iu_i$, there exists
a unique morphism $p_i:X_i\rightarrow Z_i$
such that $c_ip_i=pa_i$ and that
$v_ip_i=q_iu_i$. It is then not hard to verify
that the left sagittal square is cartesian,
therefore $p_i$ is an epimorphism, so $Z_i$
is the image of $pa_i$. The axiom ({\bf
A}\ref{Axiome:A carre cartesien}) is
therefore verified. Finally, the verification
of the axiom ({\bf A}\ref{Axiome:A
recollement}) follows from the following
proposition.

\begin{proposition}\label{Pro:filtrations is arithmetic
exact category} Let $X$ and $Y$ be two
objects in $\mathcal C$ and let $\mathcal F$
(resp. $\mathcal G$) be an $I$-filtration of
$X$ (resp. $Y$). If $f:X\rightarrow Y$ is a
morphism which is compatible with the
filtrations $(\mathcal F,\mathcal G)$, then
there exists a filtration $\mathcal H$ on
$X\oplus Y$ such that $\Gamma_f^*\mathcal
H=\mathcal F$ and $\pr_{2*}\mathcal
H=\mathcal G$, where
$\Gamma_f=(\Id,f):X\rightarrow X\oplus Y$ is
the graph of $f$ and $\pr_2:X\oplus
Y\rightarrow Y$ is the projection onto the
second factor.
\end{proposition}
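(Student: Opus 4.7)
The plan is to take the most natural candidate, namely the direct sum filtration
\[\mathcal H(i):=\mathcal F(i)\oplus\mathcal G(i)\hookrightarrow X\oplus Y,\qquad i\in I^*,\]
and verify the two desired identities. First I would use the compatibility hypothesis: by definition there is a (necessarily unique) morphism of filtrations $F:\mathcal F\to\mathcal G$ with $F(-\infty)=f$, so for every $i\in I^*$ the morphism $f$ restricts to a morphism $\mathcal F(i)\to\mathcal G(i)$, which makes $\mathcal F(i)\oplus\mathcal G(i)$ a well-defined subobject of $X\oplus Y$. Functoriality in $i$ is inherited from $\mathcal F$ and $\mathcal G$ since $\mathcal H(u_{ij})=\mathcal F(u_{ij})\oplus\mathcal G(u_{ij})$ is a direct sum of monomorphisms, hence a monomorphism. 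To see that $\mathcal H$ is left continuous I would invoke that in the Abelian category $\mathcal C$ the direct sum coincides with the direct product, and therefore commutes with all projective limits; this is the one place where the Abelian hypothesis is genuinely needed, and is the subtlest step of the argument.

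For the identity $\Gamma_f^*\mathcal H=\mathcal F$, by definition $\Gamma_f^*\mathcal H(i)$ is the fiber product of $\Gamma_f=(\Id,f):X\to X\oplus Y$ with the inclusion $\mathcal F(i)\oplus\mathcal G(i)\hookrightarrow X\oplus Y$, viewed as a subobject of $X$. Concretely this picks out those $x\in X$ satisfying simultaneously $x\in\mathcal F(i)$ and $f(x)\in\mathcal G(i)$; the second condition is automatic by compatibility, whence $\Gamma_f^*\mathcal H(i)=\mathcal F(i)$ as subobjects of $X$, for every $i\in I^*$.

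For the identity $\pr_{2*}\mathcal H=\mathcal G$ I would compute the weak direct image first: the composition $\mathcal F(i)\oplus\mathcal G(i)\hookrightarrow X\oplus Y\twoheadrightarrow Y$ factors as the canonical epimorphism $\mathcal F(i)\oplus\mathcal G(i)\twoheadrightarrow\mathcal G(i)$ followed by the inclusion $\mathcal G(i)\hookrightarrow Y$, so $\pr_{2\flat}\mathcal H(i)=\mathcal G(i)$ for every $i$. Since $\mathcal G$ is already left continuous, the left continuous envelope operation is trivial on it, and therefore $\pr_{2*}\mathcal H=(\pr_{2\flat}\mathcal H)^l=\mathcal G$. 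Apart from the left-continuity verification singled out above, everything reduces to standard diagram chasing using the biproduct structure, so no significant further obstacle remains.
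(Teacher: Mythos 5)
Your proposal is correct and follows essentially the same route as the paper: the same candidate $\mathcal H(i)=\mathcal F(i)\oplus\mathcal G(i)$, the same observation that $\pr_{2\flat}\mathcal H=\mathcal G$ so $\pr_{2*}\mathcal H=\mathcal G^l=\mathcal G$, and the same pullback verification (your element-style check that $x\in\mathcal F(i)$ forces $f(x)\in\mathcal G(i)$ is exactly the paper's universal-property diagram chase, using that $\psi_i$ is a monomorphism, phrased with generalized elements).
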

\begin{proof}
Let $\mathcal H$ be the filtration such that
$\mathcal H(i)=\mathcal F(i)\oplus\mathcal
G(i)$. Clearly it is left continuous, and
$\pr_{2\flat}\mathcal H=\mathcal G$.
Therefore $\pr_{2*}\mathcal H=\mathcal
G^{l}=\mathcal G$. Moreover, for any $i\in
I$, consider the square
\begin{equation}\label{Equ:square fiber product}
\xymatrix{\relax \mathcal
F(i)\ar[r]^-{\phi_i}\ar[d]_-{(\Id,f_i)} &
X\ar[d]^-{\;(\Id,f)}\\ \mathcal
F(i)\oplus\mathcal
G(i)\ar[r]_-{\Phi_i}&X\oplus Y
}\end{equation} where $\phi_i:\mathcal
F(i)\rightarrow X$ and
$\Phi_i=\phi_i\oplus\psi_i:\mathcal
F(i)\oplus\mathcal G(i)\rightarrow X\oplus Y$
are canonical inclusions, $f_i:\mathcal
F(i)\rightarrow\mathcal G(i)$ is the morphism
through which the restriction of
$f$ on $\mathcal F(i)$ (i.e., $f\phi_i$)  factorizes.
Then the square \eqref{Equ:square fiber
product} is commutative. Suppose that
$\alpha:Z\rightarrow X$ and
$\beta=(\beta_1,\beta_2):Z\rightarrow\mathcal
F(i)\oplus\mathcal G(i) $ are two morphisms
such that $(\Id,f)\alpha=\Phi_i\beta$.
\begin{equation}\label{Equ:produit fibre fiplusgi}
\xymatrix{\relax
Z\ar@{.>}[rd]|-{\beta_1}\ar@/^1pc/[rrd]^-{\alpha}
\ar@/_1pc/[rdd]_-{\beta}\\
&\mathcal
F(i)\ar[r]^-{\phi_i}\ar[d]^-{\;(\Id,f_i)} &
X\ar[d]^-{\;(\Id,f)}\\ &\mathcal
F(i)\oplus\mathcal
G(i)\ar[r]_-{\Phi_i}&X\oplus Y}\end{equation}
Then we have $\alpha=\phi_i\beta_1$ and
$f\alpha=\psi_i\beta_2$. So
\[\psi_i\beta_2=f\alpha=f\phi_i\beta_1=\psi_if_i\beta_1.\]
As $\psi_i$ is a monomorphism, we obtain that
$f_i\beta_1=\beta_2$. So
$\beta_1:Z\rightarrow\mathcal F(i)$ is the
only morphism such that the diagram
\eqref{Equ:produit fibre fiplusgi} commutes.
Hence we get $\mathcal F=(\Id,f)^*\mathcal
H$.
\end{proof}

Notice that the category of arithmetic
objects $\mathcal C_A$ is equivalent to the
category $\mathbf{Fil}^{I,l}(\mathcal C)$ of
left continuous filtrations. Moreover, there
exist some variants of $(\mathcal C,\mathcal
E,A)$. For example, if for any object $X$ in
$\mathcal C$, we denote by $A_0(X)$ the set
of isomorphism classes of $I$-filtrations
which are separated, exhaustive, left
continuous and of finite length. Then
$(\mathcal C,\mathcal E,A_0)$ is also an
arithmetic exact category. Furthermore, the
category $\mathcal C_{A_0}$ is equivalent to
$\mathbf{Fil}^{I,\mathrm{self}}(\mathcal C)$,
the full subcategory of
$\mathbf{Fil}^{I,l}(\mathcal C)$ consisting
of filtrations which are {\bf s}eparated,
{\bf e}xhaustive, {\bf l}eft continous and of
{\bf f}inite length.

\section{Harder-Narasimhan categories}

\hskip\parindent In this section we introduce
the formalism of Harder-Narasimhan
filtrations (indexed by $\mathbb R$) on
arithmetic exact categories. Let $(\mathcal
C,\mathcal E,A)$ be an arithmetic exact
category. We say that an arithmetic object
$(X,h)$ is {\it non-zero} if $X$ is non-zero
in $\mathcal C$. Since $\mathcal C$ is
essentially small, the isomorphism classes of
objects in $\mathcal C_{A}$ form a set.

We denote by $\mathcal E_A$ the class of
diagrams of the form
\[\xymatrix{\relax 0\ar[r]&(X',h')\ar[r]^-i&(X,h)\ar[r]^-p&(X'',h'')
\ar[r]&0}\] where $(X',h')$, $(X,h)$ and
$(X'',h'')$ are arithmetic objects and
\[\xymatrix{\relax 0\ar[r]&X'\ar[r]^-i&X\ar[r]^-p&X''\ar[r]&0}\]
is a diagram in $\mathcal E$ such that
$h'=i^*(h)$ and $h''=p_*(h)$.

Let $K_0(\mathcal C,\mathcal E,A)$ be the
free Abelian group generated by isomorphism
classes in $\mathcal C_A$, modulo the
subgroup generated by elements of the form
$[(X,h)]-[(X',h')]-[(X'',h'')]$, where
\[\xymatrix{\relax 0\ar[r]&(X',h')\ar[r]^-i
&(X,h)\ar[r]^-p&(X'',h'')\ar[r]&0}\] is a
diagram in $\mathcal E_A$, in other words,
$\xymatrix{\relax
0\ar[r]&X'\ar[r]^-i&X\ar[r]^-p&X''\ar[r]&0}$,
and $i^*(h)=h'$, $p_*(h)=h''$. The group
$K_0(\mathcal C,\mathcal E,A)$ is called the
{\it Grothendieck group} of the arithmetic
exact category $(\mathcal C,\mathcal E,A)$.
We have a ``{\it forgetful}'' homomorphism
from $K_0(\mathcal C,\mathcal E,A)$ to
$K_0(\mathcal C,\mathcal E)$, the
Grothendieck group\footnote{Which is, by
definition, the free Abelian group generated
by isomorphism classes in $\mathcal C$,
modulo the sub-group generated by elements of
the form $[X]-[X']-[X'']$, where
$\xymatrix{0\ar[r]&X'\ar[r]&X\ar[r]&X''\ar[r]&0}$
is a diagram in $\mathcal E$.} of the exact
category $(\mathcal C,\mathcal E)$, which
sends $[(X,h)]$ to $[X]$.

In order to establish the semi-stability of
arithmetic objects and furthermore the
Harder-Narasimhan formalism, we need two
auxiliary homomorphisms of groups. The first
one, from $K_0(\mathcal C,\mathcal E,A)$ to
$\mathbb R$, is called a {\it degree
function} on $(\mathcal C,\mathcal E,A)$; and
the second one, from $K_0(\mathcal C,\mathcal
E)$ to $\mathbb Z$, which takes strictly
positive values on elements of the form $[X]$
with $X$ non-zero, is called a {\it rank
function} on $(\mathcal C,\mathcal E)$.

Now let $\widehat\deg:K_0(\mathcal C,\mathcal
E,A )\rightarrow\mathbb R$ be a degree
function on $(\mathcal C,\mathcal E,A)$ and
$\rang:K_0(\mathcal C,\mathcal
E)\rightarrow\mathbb Z$ be a rank function on
$(\mathcal C,\mathcal E)$. For any arithmetic
object $(X,h)$ in $(\mathcal C,\mathcal E,
A)$, we shall use the expressions
$\widehat\deg(X,h)$ and $\rang(X)$ to denote
$\widehat\deg([(X,h)])$ and $\rang([X])$, and
call them the {\it arithmetic degree} and
the {\it rank} of $(X,h)$ respectively. If
$(X,h)$ is non-zero, the quotient
$\widehat\mu(X,h)=\widehat\deg(X,h)/\rang(X)$
is called the {\it arithmetic slope} of
$(X,h)$. We say that a non-zero arithmetic
object $(X,h)$ is {\it semistable} if for
any non-zero arithmetic subobject $(X',h')$
of $(X,h)$, we have
$\widehat\mu(X',h')\le\widehat\mu(X,h)$.

The following proposition provides some basic
properties of arithmetic degrees and of
arithmetic slopes.

\begin{proposition}\label{Pro:propriete preliminaire de degre}
Let us keep the notations above.
\begin{enumerate}[1)]
\item If $\xymatrix{0\ar[r]&(X',h')\ar[r]&(X,h)\ar[r]&(X'',h'')\ar[r]&0}$
is a diagram in $\mathcal E_A$, then
\[\widehat\deg(X,h)=\widehat\deg(X',h')+\widehat\deg(X'',h'').\]
\item If $(X,h)$ is an arithmetic object of
rank $1$, then it is semistable.
\item Any non-zero arithmetic
object $(X,h)$ is semistable if and only if
for any non-trivial arithmetic quotient
$(X'',h'')$ (i.e., $X''$ does not reduce to
zero and is not canonically isomorphic to $X$),
we have
$\widehat\mu(X,h)\le\widehat\mu(X'',h'')$.
\end{enumerate}
\end{proposition}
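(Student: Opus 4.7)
The plan is to prove the three parts in order, each reducing to a short calculation once the right short exact sequence or ratio identity is in hand.

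\textbf{Part 1.} I would appeal directly to the definition of $K_0(\mathcal C,\mathcal E,A)$. The relation imposed in the Grothendieck group means that for any diagram
\[\xymatrix{0\ar[r]&(X',h')\ar[r]&(X,h)\ar[r]&(X'',h'')\ar[r]&0}\]
in $\mathcal E_A$, one has $[(X,h)]=[(X',h')]+[(X'',h'')]$ in $K_0(\mathcal C,\mathcal E,A)$. Since $\widehat\deg$ is by definition a group homomorphism to $\mathbb R$, the additivity follows immediately. This part is essentially unpacking definitions.

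\textbf{Part 2.} Let $(X,h)$ have $\rang(X)=1$ and let $i:(X',h')\hookrightarrow(X,h)$ be a non-zero arithmetic subobject, with cokernel $p:X\twoheadrightarrow X''$ in $\mathcal E$. Applying the rank function to this exact sequence in $\mathcal C$ gives $\rang(X')+\rang(X'')=1$. Since $X'$ is non-zero, $\rang(X')\ge 1$, and since rank is strictly positive on non-zero objects, $\rang(X'')\le 0$ forces $X''=0$. By Ex1, $i$ is the kernel of the zero morphism $X\to 0$, hence an isomorphism; then by axiom ({\bf A}\ref{Axiome:A iosmorphisme}), $h'=i^*(h)$ and $h$ correspond under this isomorphism, so $[(X',h')]=[(X,h)]$ in $K_0$ and consequently $\widehat\mu(X',h')=\widehat\mu(X,h)$. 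Hence the defining inequality holds (with equality).

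\textbf{Part 3.} This is the only part that requires genuine work, and rests on the elementary ``mediant'' identity: for integers $r_1,r_2>0$ and reals $d,d_1,d_2$ with $d=d_1+d_2$, the slope $d/(r_1+r_2)$ is a convex combination of $d_1/r_1$ and $d_2/r_2$, whence $d_1/r_1\le d/(r_1+r_2)$ iff $d/(r_1+r_2)\le d_2/r_2$. To apply this, given any admissible monomorphism $(X',h')\hookrightarrow(X,h)$ with arithmetic quotient $(X'',h'')$, I set $r_1=\rang(X')$, $r_2=\rang(X'')$, $d_1=\widehat\deg(X',h')$, $d_2=\widehat\deg(X'',h'')$, and invoke Part 1 together with additivity of the rank.

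For $(\Rightarrow)$, suppose $(X,h)$ is semistable and let $(X'',h'')$ be a non-trivial arithmetic quotient, with kernel $(X',h')$. Non-triviality forces $X''\neq 0$ (so $r_2>0$) and, because $p:X\to X''$ is not an isomorphism, $X'\neq 0$ (so $r_1>0$); by semistability $d_1/r_1\le d/(r_1+r_2)$, so the identity yields $\widehat\mu(X,h)\le\widehat\mu(X'',h'')$. For $(\Leftarrow)$, take a non-zero arithmetic subobject $(X',h')$ with quotient $(X'',h'')$. If $X''=0$, the inclusion is an isomorphism and $\widehat\mu(X',h')=\widehat\mu(X,h)$; otherwise $p$ is not an isomorphism (else $X'=0$, which is excluded), so $(X'',h'')$ is a non-trivial quotient and by hypothesis $d/(r_1+r_2)\le d_2/r_2$, giving $\widehat\mu(X',h')\le\widehat\mu(X,h)$ via the identity again.

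The only mildly delicate point is handling the degenerate cases where one of the terms of the exact sequence has rank zero (and is therefore zero), and the observation that an admissible mono with zero cokernel must be an isomorphism—both of which follow cleanly from Ex1 and the strict positivity of the rank function on non-zero objects.
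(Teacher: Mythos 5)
Your proposal is correct and follows essentially the same route as the paper: part 1 by the homomorphism property of $\widehat\deg$ on $K_0(\mathcal C,\mathcal E,A)$, part 2 by using positivity of the rank function to force $X''=0$ so the inclusion is an isomorphism, and part 3 by writing $\widehat\mu(X,h)$ as the convex combination $\frac{\rang(X')}{\rang(X)}\widehat\mu(X',h')+\frac{\rang(X'')}{\rang(X)}\widehat\mu(X'',h'')$ and comparing. The extra care you take with the degenerate cases (zero kernel or cokernel, the equivalence of non-triviality of $(X',h')$ and $(X'',h'')$) only makes explicit what the paper leaves implicit.
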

\begin{proof}
Since $\widehat\deg$ is a homomorphism from
$K_0(\mathcal C,\mathcal E,A)$ to $\mathbb
R$, 1) is clear.

2) If $(X',h')$ is an arithmetic subobject
of $(X,h)$, then it fits into a diagram
\[\xymatrix{\relax 0\ar[r]&(X',h')\ar[r]^-f&(X,h)\ar[r]&(X'',h'')\ar[r]&0}\]
 in
$\mathcal C_A$.
Since $X'$ is non-zero, $\rang(X')\ge 1$.
Therefore $\rang(X'')=0$ and hence $X''=0$.
In other words, $f$ is an isomorphism. So we
have $\widehat\mu(X',h')=\widehat\mu(X,h)$.

3) For any diagram
\[\xymatrix{0\ar[r]&(X',h')\ar[r]&(X,h)\ar[r]&(X'',h'')\ar[r]&0}\]
in $\mathcal E_A$, $(X'',h'')$ is non-trivial
if and only if $(X',h')$ is non-trivial. If
$(X',h')$ and $(X'',h'')$ are both
non-trivial, we have the following equality
\[\widehat{\mu}(X,h)=\frac{\rang(X')}{\rang(X)}\widehat{\mu}(X',h')
+\frac{\rang(X'')}{\rang(X)}\widehat\mu(X'',h'').\]
Therefore
$\widehat{\mu}(X',h')\le\widehat\mu(X,h)\Longleftrightarrow
\widehat{\mu}(X'',h'')\ge\widehat\mu(X,h)$.
\end{proof}

We are now able to introduce
conditions ensuring the existence and the
uniqueness of Harder-Narasimhan ``flag''. The
conditions will be proposed as axioms in the
coming definition, and in the theorem which
follows, we shall prove the existence and the
uniqueness of Harder-Narasimhan ``flag''.

\begin{definition}\label{Def:categorie de HN arithmetique}
Let $(\mathcal C,\mathcal E,A)$ ba an
arithmetic exact category,
$\widehat\deg:K_0(\mathcal C,\mathcal
E,A)\rightarrow\mathbb R$ be a degree function and
$\rang:K_0(\mathcal C,\mathcal
E)\rightarrow\mathbb Z$ be a rank function. We
say that $(\mathcal C,\mathcal
E,A,\widehat\deg,\rang)$ is a {\it
Harder-Narasimhan category} if the following
two axioms are verified:
\begin{enumerate}[({\bf HN}1)]
\item\label{Axiom:HNA sous fibre destabilisant}
For any non-zero arithmetic object $(X,h)$,
there exists an arithmetic subobject
$(X_{\des},h_{\des})$ of $(X,h)$ such that
\[\hskip-1 cm\widehat\mu(X_{\des},h_{\des})=\sup\{\widehat\mu(X',h')\;|\;
(X',h')\text{ is a non-zero arithmetic subobject of
}(X,h)\}.\] Furthermore, for any non-zero arithmetic
subobject $(X_0,h_0)$ of $(X,h)$ such that
$\widehat\mu(X_0,h_0)=\widehat\mu(X_{\des},h_{\des})$,
there exists an admissible monomorphism
$f:X_0\rightarrow X_{\des}$ such that the
diagram
\[\xymatrix{\relax X_0\ar[r]^-{f}\ar[rd]_-j&X_{\des}\ar[d]^-i\\&
X}\] is commutative and that
$f^*(h_{\des})=h_0$, where $i$ and $j$ are
canonical admissible monomorphisms.
\item\label{Axiom:HNA comparaison de pente pour des objet semistable}
If $(X_1,h_1)$ and $(X_2,h_2)$ are two
semistable arithmetic objects such that
$\widehat\mu(X_1,h_1)>\widehat\mu(X_2,h_2)$,
there exists {\bf no} non-zero morphism from $X_1$
to $X_2$ which is compatible with arithmetic
structures.
\end{enumerate}
\end{definition}

With the notations of Definition
\ref{Def:categorie de HN arithmetique}, if
$(X,h)$ is a non-zero arithmetic object, then
$(X_{\des},h_{\des})$ is a semistable
arithmetic object. If in addition $(X,h)$ is
not semistable, we say that
$(X_{\des},h_{\des})$ is the arithmetic
subobject which {\it destabilizes} $(X,h)$.

\begin{theorem}\label{Thm:suite de Harder-Narasimhan arithmetique}
Let $(\mathcal C,\mathcal
E,A,\widehat{\deg},\rang)$ be a
Harder-Narasimhan category. If $(X,h)$ is a
non-zero arithmetic object, then there exists
a sequence of admissible monomorphisms in
$\mathcal C$:
\begin{equation}\label{Equ:suite de
HNA}\xymatrix{0=X_0\ar[r]&X_1\ar[r]&\cdots\ar[r]&X_{n-1}\ar[r]&X_n=X},\end{equation}
unique up to a unique isomorphism, such that,
if for any integer $0\le i\le n$, we denote
by $h_i$ the induced arithmetic structure
(from $h$) on $X_i$ and if we equip, for any
integer $1\le j\le n$, $X_{j}/X_{j-1}$ with
the quotient arithmetic structure (of
$h_{j}$), then
\begin{enumerate}[1)]
\item for any integer $1\le j\le n$, the arithmetic
object $\overline{X_{j}/X_{j-1}}$ defined
above is semistable;
\item we have the inequalities $\widehat\mu(\overline{X_1/X_0})>
\widehat\mu(\overline{X_2/X_1})
>\cdots>\widehat\mu(\overline{X_n/X_{n-1}})$.
\end{enumerate}
\end{theorem}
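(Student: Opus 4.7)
The plan is to proceed by induction on $r = \rang(X)$. When $(X,h)$ is already semistable (which is automatic when $r = 1$ by Proposition 4.3), the chain $0 \subsetneq X_1 = X$ works. Otherwise, axiom HN1 furnishes a destabilizing subobject $(X_{\des}, h_{\des})$, semistable of maximal slope; I set $X_1 := X_{\des}$. Since $X_1 \subsetneq X$, the arithmetic quotient $(X/X_1, \bar h)$ has rank strictly less than $r$ and, by the inductive hypothesis, admits an HN filtration $0 = Y_0 \subsetneq Y_1 \subsetneq \cdots \subsetneq Y_m = X/X_1$. Pulling each $Y_j$ back along the admissible epimorphism $X \twoheadrightarrow X/X_1$ via axiom $(\mathbf{Ex}6)$ produces admissible subobjects $X_1 \subset X_2 \subset \cdots \subset X_{m+1} = X$ with $X_{j+1}/X_j \cong Y_j/Y_{j-1}$ in $\mathcal C_A$, so the semistability of all successive subquotients is inherited from the induction.

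For the slope inequalities, the strict decreases at indices $j \geq 2$ come directly from the inductive hypothesis; only the new inequality $\widehat\mu(X_1) > \widehat\mu(X_2/X_1)$ needs argument. I would argue by contradiction. If $\widehat\mu(X_2/X_1) > \widehat\mu(X_1)$, the slope formula from Proposition 4.3 makes $\widehat\mu(X_2)$ a strict convex combination exceeding $\widehat\mu(X_1) = \widehat\mu_{\max}(X)$, contradicting HN1. If $\widehat\mu(X_2/X_1) = \widehat\mu(X_1)$, then $\widehat\mu(X_2) = \widehat\mu(X_1)$, and the second clause of HN1 yields an admissible monomorphism $X_2 \hookrightarrow X_1$, impossible since $\rang(X_2) > \rang(X_1)$.

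For uniqueness, I would first establish an auxiliary vanishing lemma: if $(M, h_M)$ is semistable and $(N, h_N)$ admits a finite filtration in $\mathcal C_A$ all of whose semistable subquotients have slope strictly less than $\widehat\mu(M)$, then every $\mathcal C_A$-morphism $M \to N$ vanishes. The base case is HN2; for the inductive step, a morphism into $N$ projects to zero in the top subquotient by HN2, hence factors through the corresponding admissible kernel, and one recurses. Given two HN filtrations $(X_i)$ and $(X_i')$ of $(X,h)$, applying the lemma to the filtration $(X_i/X_1)_{i \geq 2}$ of $X/X_1$ forces $X_{\des} \to X/X_1$ to vanish whenever $\widehat\mu(X_1) < \widehat\mu(X_{\des})$, so $X_{\des}$ would embed as an admissible subobject of $X_1$, contradicting the semistability of $X_1$; thus $\widehat\mu(X_1) = \widehat\mu(X_{\des})$. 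HN1 then provides an admissible monomorphism $X_1 \hookrightarrow X_{\des}$, while the vanishing lemma applied to $(X_i/X_2)_{i \geq 3}$ together with HN2 on the slope-strict map $X_{\des} \to X_2/X_1$ produces an admissible monomorphism $X_{\des} \hookrightarrow X_1$; by ranks these are mutually inverse and yield a canonical isomorphism $X_1 \cong X_{\des}$. Induction applied to $X/X_1$ then completes the uniqueness, and the unique-isomorphism clause is automatic because every map in the comparison diagram is a monomorphism.

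The principal obstacle is the vanishing lemma, and in particular the verification that the factorization of an arithmetic morphism through the kernel of an admissible epimorphism remains a morphism in $\mathcal C_A$. This compatibility is exactly what axiom $(\mathbf{A}\ref{Axiome:A recollement})$ is designed to guarantee, but it must be invoked carefully at each step of the recursion. Once this lemma is in place, the existence and uniqueness arguments parallel the classical strategy of Harder and Narasimhan, translated into the axiomatic framework developed in this article.
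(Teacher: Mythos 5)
Your existence argument coincides with the paper's: induction on the rank, $X_1=X_{\des}$ supplied by ({\bf HN}\ref{Axiom:HNA sous fibre destabilisant}), pull-back of the Harder-Narasimhan sequence of $X/X_1$ along the admissible epimorphism by fibre products using axiom ($\mathbf{Ex}$\ref{Axiom: ex epi stable par probduit}), and the remaining inequality $\widehat\mu(\overline X_1)>\widehat\mu(\overline{X_2/X_1})$ obtained from maximality of the slope together with the second clause of ({\bf HN}\ref{Axiom:HNA sous fibre destabilisant}). The one point you pass over is that the canonical isomorphisms between the sub-quotients of the pulled-back chain and those of the chain in $X/X_1$ respect the quotient arithmetic structures; this is exactly where the paper computes ${\pi_i}_*(h_i)=f_i'^*\pi_*(h)$ via axiom ({\bf A}\ref{Axiome:A carre cartesien}), and without it ``semistability is inherited from the induction'' is not yet justified. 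That is a verification to insert, not a change of strategy.

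The uniqueness half is where there is a genuine gap. Your vanishing lemma is in substance Proposition \ref{Pro:mu max X ge mu min Y si X vers Y non nul arith}, which the paper proves only \emph{after} this theorem and only under the supplementary hypothesis that every epimorphism of $\mathcal C$ has a kernel. The step you yourself single out --- the map $M\to N$ kills the top sub-quotient, hence factors through the admissible subobject $N_{k-1}$, and the factored morphism must again be compatible with the arithmetic structures when $N_{k-1}$ carries the induced structure --- is \emph{not} what axiom ({\bf A}\ref{Axiome:A recollement}) gives: that axiom is the gluing statement used to show that compositions of compatible morphisms are compatible. What your recursion needs is Lemma \ref{Lem:compose un mono est compatible implie compatible} (equivalently its dual, Lemma \ref{Lem:compose un epi est compatible implie compatible}), and both are proved only assuming that monomorphisms have cokernels (resp.\ epimorphisms have kernels), a hypothesis which is not among the axioms of a Harder-Narasimhan category and is not assumed in the theorem; the paper adds it explicitly in Theorem \ref{Thm:compatibilite avec la filtration HN arith} and Proposition \ref{Pro:mu max X ge mu min Y si X vers Y non nul arith}, and even remarks after Corollary \ref{Cor:inegality de pentes} that it does not know how to remove it in general. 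So, as written, your uniqueness argument proves the statement only under an extra hypothesis. The paper's own uniqueness argument avoids the descent through the filtration altogether: it considers only the canonical morphism $X_{\des}\to X$, takes the first index $i$ for which it factors through $X_{i+1}$, applies ({\bf HN}\ref{Axiom:HNA comparaison de pente pour des objet semistable}) a single time to the non-zero composite $X_{\des}\to X_{i+1}\to X_{i+1}/X_i$ to force $i=0$ and $\widehat\mu(\overline X_{\des})=\widehat\mu(\overline X_1)$, and then uses the second clause of ({\bf HN}\ref{Axiom:HNA sous fibre destabilisant}) to produce the admissible monomorphism $X_1\to X_{\des}$ over $X$, whence $\overline X_1\cong\overline X_{\des}$; induction on the quotient finishes. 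Either switch to this one-step comparison, or state the kernel/cokernel hypothesis --- but then you are proving less than the theorem claims.
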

\begin{proof}
First we prove the existence by induction on
the rank $r$ of $X$. The case where $(X,h)$
is semistable is trivial, and {\it a
fortiori} the existence is true for $r=1$.
Now we consider the case where $(X,h)$ isn't
semistable. Let
$(X_1,h_1)=(X_{\des},h_{\des})$. It's a
semistable arithmetic object, and $X'=X/X_1$
is non-zero. The rank of $X'$ being strictly
smaller than $r$, we can therefore apply the
induction hypothesis on $(X',h')$, where $h'$
is the quotient arithmetic structure. We then
obtain a sequence of admissible monomorphisms
\[\xymatrix{\relax 0=X_1'\ar[r]^-{f_1'}&
X_2'\ar[r]&\cdots\ar[r]&
X_{n-1}'\ar[r]^-{f_{n-1}'}&X_n'=X'}\]
verifying the desired condition.

Since the canonical morphism from $X$ to $X'$
is an admissible epimorphism, for any $1\le
i\le n$, if we note $X_i=X\times_{X'}X_i'$,
then by the axiom ($\mathbf{Ex}$\ref{Axiom:
ex epi stable par probduit}), the projection
$\pi_i:X_i\rightarrow X_i'$ is an admissible
epimorphism. For any integer $1\le i<n$, we
have a canonical morphism from $X_i$ to
$X_{i+1}$ and the square
\begin{equation}\label{Equ:carre cartesienne exacge}\xymatrix{\relax X_i\ar[r]^{f_i}\ar@{->>}[d]_-{\pi_i}&X_{i+1}\ar@{->>}[d]^-{\pi_{i+1}}\\
X_i'\ar[r]_{f_i'}&X_{i+1}'}\end{equation} is
cartesian. Since $f_i'$ is an monomorphism,
also is $f_i$ (cf. \cite{Maclane71} V. 7). On
the other hand, since the square
\eqref{Equ:carre cartesienne exacge} is
cartesian, $f_i$ is the kernel of the
composed morphism
\[\xymatrix{\relax X_{i+1}\ar@{->>}[r]^-{\pi_{i+1}}&X_{i+1}'\ar@{->>}[r]^-{p_i}&X_{i+1}'/X_i'},\]
where $p_i$ is the canonical morphism. Since
$\pi_{i+1}$ and $p_i$ are admissible
epimorphisms, also is $p_i\pi_{i+1}$ (see
axiom ($\mathbf{Ex}$\ref{Axiom:ex composition
invariant epi et mono})). Therefore $f_i$ is
an admissible monomorphism. Hence we obtain a
commutative diagram
\[\xymatrix{\relax 0=X_0\ar[r]&X_1\ar[r]^-{f_1}\ar[d]_-{\pi_1}&X_2
\ar[r]\ar[d]_{\pi_2}&\cdots\ar[r]&X_{n-1}\ar[r]^-{f_{n-1}}\ar[d]^{\pi_{n-1}}
&**[r]X_n=X\ar[d]^{\pi}\\
&**[l]0=X_1'\ar[r]_-{f_1'}&X_2'\ar[r]&\cdots\ar[r]&X_{n-1}'\ar[r]_-{f_{n-1}'}&**[r]X_n'=X'}\]
where the horizontal morphisms in the lines
are admissible monomorphisms and the vertical
morphisms are admissible epimorphisms.
Furthermore, for any integer $1\le i\le n-1$,
we have a natural isomorphism $\varphi_i$
from $X_{i+1}/X_i$ to $X_{i+1}'/X_i'$. We
denote by $g_i$ (resp. $g_i'$) the canonical
morphism from $X_i$ (resp. $X_i'$) to $X$
(resp. $X'$). Let $h_i=g_i^*(h)$ (resp.
$h_i'={g_i'}^*(h')$) be the induced
arithmetic structure on $X_i$ (resp. $X_i'$).
After the axiom ({\bf A}\ref{Axiome:A carre
cartesien}),
${\pi_i}_*(h_i)={\pi_{i}}_*f_i^*(h)=f_i'^*\pi_*(h)=h_i'$.
Therefore ${\varphi_i}_*$ sends the quotient
arithmetic structure on $X_{i+1}/X_i$ to that
on $X_{i+1}'/X_i'$. Hence the arithmetic
object $\overline{X_{i+1}/X_i}$ is
semistable and we have the equality
$\widehat\mu(\overline{X_{i+1}/X_i})=\widehat\mu(\overline{X_{i+1}'/X_i'})
$. Finally, since $\overline X_1=\overline
X_{\des}$, we have
\[\widehat\mu(\overline{X_2/X_1})=\frac{\rang(X_2)
\widehat\mu(\overline
X_2)-\rang(X_1)\widehat\mu(\overline
X_1)}{\rang(X_2)-\rang(X_1)}<\widehat{\mu}(\overline
X_1).\] Therefore the sequence
$\xymatrix{0=X_0\ar[r]&X_1\ar[r]&\cdots\ar[r]&X_{n-1}\ar[r]&X_n=X}$
satisfies the desired conditions.

We then prove the uniqueness of the sequence
(\ref{Equ:suite de HNA}). By induction we
only need to prove that $\overline
X_1\cong\overline X_{\des}$. Let $i$ be the
first index such that the canonical morphism
$X_{\des}\rightarrow X$ factorizes through
$X_{i+1}$. The composed morphism
$X_{\des}{\rightarrow}X_{i+1}{\rightarrow}
X_{i+1}/X_i$ is then non-zero. Since
$\overline X_{\des}$ and
$\overline{X_{i+1}/X_i}$ are semistable, we
have $\widehat\mu(\overline
X_{\des})\le\widehat\mu(\overline{X_{i+1}/X_i})$.
This implies $i=0$ and $\widehat\mu(\overline
X_{\des})=\widehat\mu(\overline X_1)$.
Therefore the morphism $X_1\rightarrow X$
factorizes through $X_{\des}$. So we have
$X_{\des}\cong X_1$.
\end{proof}

From the proof above we see that the axiom
({\bf HN}\ref{Axiom:HNA sous fibre
destabilisant}) suffices for the existence.
It is the axiom ({\bf HN}\ref{Axiom:HNA
comparaison de pente pour des objet
semistable}) which ensures the uniqueness.

\begin{definition}
With the notations of Theorem \ref{Thm:suite
de Harder-Narasimhan arithmetique}, the
sequence (\ref{Equ:suite de HNA}) is called
the {\it Harder-Narasimhan sequence} of the
(non-zero) arithmetic object $(X,h)$.
Sometimes we write instead
\[\xymatrix{0=\overline X_0\ar[r]&\overline X_1\ar[r]&\cdots
\ar[r]&\overline X_{n-1}\ar[r]&\overline
X_n=\overline X}\] for underlining
the arithmetic structures. The real numbers
$\widehat\mu(\overline X_1)$ and
$\widehat\mu(\overline{X/X_{n-1}})$ are
called respectively the {\it maximal slope}
and the {\it minimal slope} of $\overline X
$, denoted by $\widehat\mu_{\max}(\overline
X)$ and $\widehat\mu_{\min}(\overline X)$. We point out that for any
integer $1\le i\le n$,
\[\xymatrix{0=X_0\ar[r]&X_1\ar[r]&\cdots\ar[r]
&X_{i-1}\ar[r]&X_i}\] is the
Harder-Narasimhan sequence of $\overline
X_i$. Therefore we have
$\widehat{\mu}_{\min}(\overline
X_i)=\widehat{\mu}(\overline{X_i/X_{i-1}})$.
Finally, we define by convention
$\widehat{\mu}_{\max}(0)=-\infty$ and
$\widehat{\mu}_{\min}(0)=+\infty$.
\end{definition}

\begin{corollary}
Let $(\mathcal C,\mathcal
E,A,\widehat{\deg},\rang)$ be a
Harder-Narasimhan category and $\overline X$
be a non-zero arithmetic object.
\begin{enumerate}[1)]
\item For any non-zero arithmetic subobject $\overline
Y$ of $\overline X$, we have
$\widehat{\mu}_{\max}(\overline
Y)\le\widehat{\mu}_{\max}(\overline X)$.
\item For any non-zero arithmetic quotient $\overline Z$
of $\overline X$, we have
$\widehat{\mu}_{\min}(\overline
Z)\ge\widehat{\mu}_{\min}(\overline X)$.
\item We have the inequalities $\widehat{\mu}_{\min}(\overline X)
\le\widehat{\mu}(\overline
X)\le\widehat{\mu}_{\max}(\overline X)$.
\end{enumerate}
\end{corollary}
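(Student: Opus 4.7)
Proof plan: I would address the three assertions in the order (3), (1), (2), the last being the main obstacle.

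For (3), denote by $0=X_0\subsetneq X_1\subsetneq\cdots\subsetneq X_n=X$ the Harder-Narasimhan sequence and use the additivity of $\widehat\deg$ on diagrams of $\mathcal E_A$ and of $\rang$ on $\mathcal E$ to write
\[
\widehat\mu(\overline X)=\frac{1}{\rang(X)}\sum_{j=1}^n\rang(X_j/X_{j-1})\,\widehat\mu(\overline{X_j/X_{j-1}}).
\]
This is a convex combination of the graded slopes, which by the theorem strictly decrease from $\widehat\mu_{\max}(\overline X)$ at $j=1$ to $\widehat\mu_{\min}(\overline X)$ at $j=n$, whence the double inequality.

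For (1), apply (\textbf{HN}1) to $\overline Y$ to obtain a destabilizing arithmetic subobject $\overline Y_{\des}\hookrightarrow\overline Y$ with $\widehat\mu(\overline Y_{\des})=\widehat\mu_{\max}(\overline Y)$. The composite $\overline Y_{\des}\hookrightarrow\overline Y\hookrightarrow\overline X$ is an admissible monomorphism by (\textbf{Ex}4), and axiom (\textbf{A}2) shows that the arithmetic structure pulled back from $\overline X$ along this composite agrees with the one that $\overline Y_{\des}$ already carries as a subobject of $\overline Y$. Hence $\overline Y_{\des}$ is a non-zero arithmetic subobject of $\overline X$, and the supremum clause in (\textbf{HN}1) for $\overline X$ yields $\widehat\mu(\overline Y_{\des})\le\widehat\mu(\overline X_{\des})=\widehat\mu_{\max}(\overline X)$.

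For (2) the plan is first to reduce to semistable quotients, then induct on the length $n$ of the HN flag of $\overline X$. Given $\overline Z$ with HN sequence $0=Z_0\subsetneq Z_1\subsetneq\cdots\subsetneq Z_m=Z$, the last graded piece $\overline{Z/Z_{m-1}}$ is semistable of slope $\widehat\mu_{\min}(\overline Z)$ and is a non-zero arithmetic quotient of $\overline Z$, hence of $\overline X$ by composition of admissible epimorphisms via (\textbf{Ex}4). It then suffices to prove that every non-zero semistable arithmetic quotient $\overline V$ of $\overline X$ satisfies $\widehat\mu(\overline V)\ge\widehat\mu_{\min}(\overline X)$. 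For $n=1$, $\overline X$ is itself semistable and the inequality is Proposition \ref{Pro:propriete preliminaire de degre}(3). For $n>1$, examine the compatible morphism $\overline X_1\hookrightarrow\overline X\twoheadrightarrow\overline V$: if it is non-zero, (\textbf{HN}2) applied to the two semistable objects forces $\widehat\mu(\overline X_1)\le\widehat\mu(\overline V)$, and since $\widehat\mu(\overline X_1)=\widehat\mu_{\max}(\overline X)\ge\widehat\mu_{\min}(\overline X)$ by the theorem we conclude; if it vanishes, $\overline V$ factors as an arithmetic quotient of $\overline{X/X_1}$, which has HN flag of length $n-1$ sharing its last graded piece with that of $\overline X$, so the inductive hypothesis gives $\widehat\mu(\overline V)\ge\widehat\mu_{\min}(\overline{X/X_1})=\widehat\mu_{\min}(\overline X)$.

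The main obstacle is the vanishing case in the induction of (2): one must verify that the factorization $X/X_1\to V$ through the cokernel (given by the universal property, since $X_1\hookrightarrow X$ factors through the kernel of $X\twoheadrightarrow V$) is itself an admissible epimorphism in $\mathcal C$, and that it transports the arithmetic structure of $\overline{X/X_1}$ precisely to that of $\overline V$. The first point can be obtained from axiom (\textbf{Ex}7) applied to the two admissible epimorphisms involved; the second follows from the functoriality of $(\,\cdot\,)_*$ (axioms (\textbf{A}3) and (\textbf{A}4)) once the admissible-epi factorization is in place.
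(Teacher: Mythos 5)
Your treatment of 3) is exactly the paper's computation, and your argument for 1) is a genuinely different and cleaner route: the paper replaces $\overline Y$ by $\overline Y_{\des}$, takes the first index $i$ such that $Y\to X$ factors through $X_{i+1}$, and applies ({\bf HN}2) to the resulting non-zero compatible morphism $Y\to X_{i+1}/X_i$, whereas you only use ({\bf HN}1): $\overline Y_{\des}$ is an arithmetic subobject of $\overline X$ by ({\bf Ex}4) and ({\bf A}2), and the supremum clause together with $\overline X_1\cong\overline X_{\des}$ (from the uniqueness part of Theorem \ref{Thm:suite de Harder-Narasimhan arithmetique}) gives the bound. This avoids ({\bf HN}2) and the factorization $Y\to X_{i+1}$ altogether, which is a small but real gain in robustness. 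For 2), your induction on the length of the flag with the dichotomy at $X_1$ is a reorganization of the paper's argument, which instead takes the smallest $i$ with $X_{i+1}\to X\to Z$ non-zero and produces a non-zero compatible morphism $X_{i+1}/X_i\to Z$; both versions hinge on ({\bf HN}2) in the same way, and in the vanishing branch you should also justify $\widehat{\mu}_{\min}(\overline{X/X_1})=\widehat{\mu}_{\min}(\overline X)$, which follows from the existence construction in the theorem (the flag of $X$ is built over that of $X/X_{\des}$ with isomorphic graded arithmetic objects) plus uniqueness.

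The one soft spot is the resolution of what you call the main obstacle. Axiom ({\bf Ex}7) upgrades $g:X/X_1\to V$ to an admissible epimorphism only if $g$ is already known to have a kernel, and you do not verify this; it is not automatic in a bare exact category. Indeed, in an exact category that is not weakly idempotent complete one can have $\pi$ and $g\pi$ admissible epimorphisms with $g$ not an admissible epimorphism (a retraction which is not an admissible epi can be realized as the morphism induced on the cokernel of the kernel of a suitable admissible epi), so the kernel hypothesis cannot simply be dropped. Since $g$ is at least an epimorphism (because $g\pi=f$ is), your step becomes correct verbatim under the hypothesis ``every epimorphism in $\mathcal C$ has a kernel'' that the paper introduces for Lemma \ref{Lem:compose un epi est compatible implie compatible}, and then ({\bf A}3) does transport the structures as you say; but the corollary itself is stated without that hypothesis. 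To be fair, the paper's own proof of 2) makes the same silent assumption when it draws $X/X_i\twoheadrightarrow Z$ as an admissible epimorphism in its diagram, so your argument is at parity with the paper's; still, as written, the appeal to ({\bf Ex}7) is incomplete and should either verify the kernel (e.g.\ under the above hypothesis, or weak idempotent completeness, which holds in all the paper's examples) or be replaced by a different justification.
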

\begin{proof}
Let $0=X_0\longrightarrow
X_1\longrightarrow\cdots\longrightarrow
X_{n-1}\longrightarrow X_n=X$ be the
Harder-Narasimhan sequence of $\overline X$.

1) After replacing $\overline Y$ by
$\overline Y_{\des}$ we may suppose that
$\overline Y$ is semistable. Let $i$ be the
first index such that the canonical morphism
$Y\rightarrow X$ factorizes through
$X_{i+1}$. The composed morphism
$Y\rightarrow X_{i+1}\rightarrow X_{i+1}/X_i$
is non-zero and compatible with arithmetic
structures. Therefore
\[\widehat{\mu}(Y)\le\widehat{\mu}(X_{i+1}/X_i)
\le\widehat{\mu}_{\max}(X).\]

2) After replacing $\overline Z$ by a
semistable quotient we may suppose that
$\overline Z$ is itself semistable. Let
$f:X\rightarrow Z$ be the canonical morphism.
It is an admissible epimorphism. Let $i$ be
the smallest index such that the composed
morphism $X_{i+1}\rightarrow
X\stackrel{f}{\rightarrow}Z $ is non-zero.
Since the composed morphism $X_i\rightarrow
X\stackrel{f}{\rightarrow}Z$ is zero, we
obtain a non-zero morphism from $X_{i+1}/X_i$
to $Z$ which is compatible with arithmetic
structures after Axiom ({\bf A}\ref{Axiome:A
carre cartesien}).
\[\xymatrix{X_{i+1}\ar@{ >->}[r]\ar@{->>}[d]
&X\ar@{->>}[d]\\ X_{i+1}/X_i\ar@{
>->}[r]&X/X_i\ar@{->>}[r]&Z}\]
Therefore $\widehat{\mu}(\overline
Z)\ge\widehat{\mu}(\overline{X_{i+1}/X_i})
\ge\widehat{\mu}_{\min}(\overline X)$.

3) We have $\widehat{\deg}(\overline
X)=\displaystyle\sum_{i=1}^{n}\widehat{\deg}(\overline{X_i/X_{i-1}})$.
Therefore
\[\widehat{\mu}(X)=\sum_{i=1}^n\frac{\rang(
X_i/X_{i-1})}{\rang(X)}\widehat{\mu}(\overline{X_i/X_{i-1}})
\in\big[\widehat{\mu}_{\min}(\overline
X),\widehat{\mu}_{\max}(\overline X)\big].\]
\end{proof}

It is well known that if $E$ and $F$ are two
vector bundles on a smooth projective curve
$C$ such that $\mu_{\min}(E)>\mu_{\max}(F)$,
then there isn't any non-zero homomorphism
from $E$ to $F$. The following result
(Proposition \ref{Pro:mu max X ge mu min Y si
X vers Y non nul arith}) generalizes this
fact to Harder-Narasimhan categories.

\begin{lemma}
\label{Lem:compose un epi est compatible
implie compatible} Let $(\mathcal C,\mathcal
E,A)$ be an arithmetic exact category. Suppose
that any epimorphism in $\mathcal C$ has a
kernel. Let $(X,h_X)$ and $(Z,h_Z)$ be two
arithmetic objects, $(Y,h_Y)$ be an
arithmetic quotient of $(X,h_X)$, and
$f:Y\rightarrow Z$ be a morphism in $\mathcal
C$. Denote by $\pi:X\rightarrow Y$ the
canonical admissible epimorphism. The
morphism $f$ is compatible with arithmetic
structures if and only if it is the case for
$f\pi$.
\end{lemma}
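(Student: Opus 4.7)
The forward direction is immediate: $\pi$ itself is compatible with arithmetic structures, since it is an admissible epimorphism satisfying $\pi_*(h_X) = h_Y$ (so the first assertion listed just after Definition \ref{Def:categorie exacte arithematque} applies); hence, if $f$ is compatible, so is $f\pi$ by the third such assertion on composition of compatible morphisms.

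The interesting direction is the converse. Suppose $f\pi$ admits a compatible decomposition $X \xrightarrow{u} W \xrightarrow{v} Z$ through an arithmetic object $(W,h_W)$, with $u^*(h_W)=h_X$ and $v_*(h_W)=h_Z$. My plan is to transport this decomposition to one of $f$ by pushing $u$ out along $\pi$. Let $\iota: K \to X$ be the kernel of $\pi$, which is admissible mono by $(\mathbf{Ex}\ref{Axiom:ex epi et mono dans une suite exacte})$; then $u\iota$ is admissible mono by $(\mathbf{Ex}\ref{Axiom:ex composition invariant epi et mono})$, and I take $\tilde\pi: W \to W' := \Coker(u\iota)$, an admissible epi. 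From $\tilde\pi u \iota = 0$ together with $\pi=\Coker(\iota)$ I obtain a unique $\tilde u: Y \to W'$ with $\tilde u \pi = \tilde\pi u$; from $v u \iota = f\pi\iota = 0$ I obtain a unique $v': W' \to Z$ with $v'\tilde\pi = v$. The square with horizontals $u, \tilde u$ and verticals $\pi, \tilde\pi$ is then precisely the pushout of $u$ along $\pi$, so $(\mathbf{Ex}\ref{Axiom:ex mono stable par coproduit})$ guarantees that $\tilde u$ is an admissible monomorphism. The morphism $v'$ is an epimorphism because $v = v'\tilde\pi$ is, so by the standing hypothesis $v'$ has a kernel, and $(\mathbf{Ex}\ref{Axiom: ex composition ep mon implie un epi mono})$ then upgrades $v'$ to an admissible epimorphism. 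Finally, $v'\tilde u \pi = vu = f\pi$ together with $\pi$ being epi gives $f = v'\tilde u$.

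To endow the witness object with the correct arithmetic structure, I set $h_{W'} := \tilde\pi_*(h_W)$. Then $v'_*(h_{W'}) = v_*(h_W) = h_Z$ by functoriality $(\mathbf{A}\ref{Axiome:A fonctorialite limage directe})$, and the remaining identity
\[
\tilde u^*(h_{W'}) = \tilde u^*\tilde\pi_*(h_W) = \pi_* u^*(h_W) = \pi_*(h_X) = h_Y
\]
is exactly axiom $(\mathbf{A}\ref{Axiome:A carre cartesien})$ applied to our square (which, by the footnote to that axiom, is automatically both cartesian and cocartesian). The main obstacle I anticipate is precisely this last point: confirming that the pushout square really meets the hypotheses of $(\mathbf{A}\ref{Axiome:A carre cartesien})$, i.e., that horizontals are admissible monos and verticals admissible epis (which we have arranged by construction) and that the square is cocartesian; the latter reduces to a diagram chase using the cokernel universal properties of $\pi$ and $\tilde\pi$, and this is where the routine bookkeeping concentrates.
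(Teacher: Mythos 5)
Your proof is correct and follows essentially the same route as the paper: the paper forms the fiber coproduct $T$ of $i$ and $\pi$ (via $(\mathbf{Ex}5)$) and then shows the canonical map $q:W\to T$ is $\Coker(i\tau)$, whereas you build the same object directly as $\Coker(u\iota)$ and verify it is the pushout --- a mirror-image organization of the identical argument, with the same use of $(\mathbf{Ex}5)$, $(\mathbf{Ex}7)$ together with the kernel hypothesis, and axioms $(\mathbf{A}3)$, $(\mathbf{A}6)$ to transfer the arithmetic structure. The cocartesian verification you defer is indeed the routine chase you describe, so nothing is missing.
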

\begin{proof}
Since $\pi$ is compatible with arithmetic
structures, the compatibility of $f$ with
arithmetic structures implies that of $f\pi$.
It then suffices to verify the converse
assertion. By definition there exists an
arithmetic object $(W,h_W)$ and a
decomposition $\xymatrix{X\ar@{
>->}[r]^i&W\ar@{->>}[r]^p &Z}$ of $f\pi$
such that $i^*h_W=h_X$ and $p_*h_W=h_Z$. Let
$T$ be the fiber coproduct of $i$ and $\pi$
and let $j:Y\rightarrow T$ and
$q:W\rightarrow T$ be canonical morphisms.
After Axiom ({\bf Ex}\ref{Axiom:ex mono
stable par coproduit}), $j$ is an admissible
monomorphism. Let $\tau:U\rightarrowtail X$
be the kernel of $\pi$. We assert that
$q=\Coker(i\tau)$. On one hand, we have
$qi\tau=j\pi\tau=0$. On the other hand, if
$\alpha:W\rightarrow V$ is a morphism in
$\mathcal C$ such that $\alpha i\tau=0$, then
there exists a unique morphism
$\beta:Y\rightarrow V$ such that
$\beta\pi=\alpha i$ since $\pi$ is a cokernel
of  $\tau$. Therefore, there exits a unique
morphism $\gamma:T\rightarrow V$ such that
$\gamma q=\alpha$. So $q$ is a cokernel of
$i\tau$, hence an admissible epimorphism. The
morphisms $p:W\twoheadrightarrow Z$ and
$f:Y\rightarrow Z$ induce a morphism
$g:T\rightarrow Z$:
\[\xymatrix{U\ar@{ >->}[d]_\tau\\X\ar@{ >->}[r]^i\ar@{->>}[d]_{\pi}&W\ar@{->>}[d]_q\ar@{->>}[rd]^p\\
Y\ar@{ >->}[r]_j&T\ar[r]_g&Z}\] Since $g$ is
an epimorphism, by hypothesis it has a
kernel. After Axiom ({\bf Ex}\ref{Axiom: ex
composition ep mon implie un epi mono}), it
is an admissible epimorphism. Finally if we
denote by $h_T$ the arithmetic structure
$q_*h_W$ on $T$, we have
$g_*(h_T)=p_*(h_W)=h_Z$ and
$j^*(h_T)=\pi_*(i^*h_W)=\pi_*(h_X)=h_Y$.
\end{proof}

\begin{proposition}
\label{Pro:mu max X ge mu min Y si X vers Y
non nul arith} Let $(\mathcal C,\mathcal
E,A,\widehat{\deg},\rang)$ be a
Harder-Narasimhan category. Suppose that
any epimorphism in $\mathcal C$ has a kernel.
If $\overline X$ and $\overline Y$ are two
arithmetic objects and if $f:\overline
X\rightarrow\overline Y$ is a non-zero
morphism compatible with arithmetic
structures, then
$\widehat{\mu}_{\min}(\overline
X)\le\widehat{\mu}_{\max}(\overline Y)$.
\end{proposition}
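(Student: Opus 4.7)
The plan is to reduce, by successively using the HN filtrations of $\overline{X}$ and $\overline{Y}$, to the situation of a non-zero compatible morphism between two semistable arithmetic objects, and then to apply axiom ({\bf HN}\ref{Axiom:HNA comparaison de pente pour des objet semistable}). Write $0=X_0\hookrightarrow\cdots\hookrightarrow X_n=X$ and $0=Y_0\hookrightarrow\cdots\hookrightarrow Y_m=Y$ for the HN filtrations of $\overline{X}$ and $\overline{Y}$ respectively.

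I first shrink the source. Let $i$ be the smallest index such that the composite $X_i\hookrightarrow X\stackrel{f}{\to}Y$ is non-zero, which exists because $f\neq 0$. By minimality, the restriction of $f$ to $X_{i-1}$ vanishes, so $f|_{X_i}$ factors through the admissible epimorphism $X_i\twoheadrightarrow X_i/X_{i-1}$ as a non-zero morphism $g:X_i/X_{i-1}\to Y$. The composite $X_i\hookrightarrow X\stackrel{f}{\to}Y$ is compatible with arithmetic structures (composition of two compatibles), and the hypothesis on $\mathcal C$ allows Lemma~\ref{Lem:compose un epi est compatible implie compatible} to conclude that $g$ is also compatible. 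Note that $X_i/X_{i-1}$ is semistable and $\widehat\mu(\overline{X_i/X_{i-1}})\geq\widehat\mu_{\min}(\overline X)$.

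I then dually shrink the target. Let $j$ be the smallest index for which the composition $X_i/X_{i-1}\stackrel{g}{\to}Y\twoheadrightarrow Y/Y_j$ vanishes; then $1\leq j\leq m$. By minimality the composite $X_i/X_{i-1}\to Y/Y_{j-1}$ is non-zero, and its further composition with the admissible epimorphism $Y/Y_{j-1}\twoheadrightarrow Y/Y_j$ is zero, so it factors through the admissible monomorphism $\iota:Y_j/Y_{j-1}\hookrightarrow Y/Y_{j-1}$, producing a non-zero morphism $\phi:X_i/X_{i-1}\to Y_j/Y_{j-1}$; the target is semistable with $\widehat\mu(\overline{Y_j/Y_{j-1}})\leq\widehat\mu_{\max}(\overline Y)$.

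What remains --- and what I expect to be the main obstacle --- is to establish the compatibility of $\phi$ with arithmetic structures; once that is done, axiom ({\bf HN}\ref{Axiom:HNA comparaison de pente pour des objet semistable}) will give $\widehat\mu(\overline{X_i/X_{i-1}})\leq\widehat\mu(\overline{Y_j/Y_{j-1}})$, whence $\widehat\mu_{\min}(\overline X)\leq\widehat\mu_{\max}(\overline Y)$. The composite $\iota\phi$ is certainly compatible ($g$ followed by the admissible epi $Y\twoheadrightarrow Y/Y_{j-1}$), and axiom ({\bf A}\ref{Axiome:A carre cartesien}) applied to the cartesian/cocartesian square
\[\xymatrix{Y_j\ar@{ >->}[r]\ar@{->>}[d] & Y\ar@{->>}[d]\\
Y_j/Y_{j-1}\ar@{ >->}[r]_-{\iota} & Y/Y_{j-1}}\]
shows that $\iota$ itself is compatible. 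To descend compatibility from $\iota\phi$ to $\phi$, I would take an arithmetic decomposition $X_i/X_{i-1}\hookrightarrow V\twoheadrightarrow Y/Y_{j-1}$ of $\iota\phi$ and pull $V$ back along $\iota$ using axiom ({\bf Ex}\ref{Axiom: ex epi stable par probduit}) to obtain $V'$, with $V'\twoheadrightarrow Y_j/Y_{j-1}$ an admissible epi and $V'\hookrightarrow V$ an admissible mono (the kernel of the composed admissible epi $V\twoheadrightarrow Y/Y_j$); the map $X_i/X_{i-1}\hookrightarrow V$ then factors through $V'$, and the factoring arrow has to be verified to be an admissible monomorphism via the mono half of axiom ({\bf Ex}\ref{Axiom: ex composition ep mon implie un epi mono}). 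If this verification proves too delicate, a safer fallback is an induction on $n+m$, splitting according to whether $f|_{X_1}$ vanishes --- applying Lemma~\ref{Lem:compose un epi est compatible implie compatible} to pass to $X/X_1$ when it does --- which reduces everything to the base case $n=m=1$ handled directly by ({\bf HN}\ref{Axiom:HNA comparaison de pente pour des objet semistable}).
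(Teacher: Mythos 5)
Your overall route is the paper's: the source is shrunk exactly as in the paper via Lemma~\ref{Lem:compose un epi est compatible implie compatible}, and your choice of the smallest $j$ with vanishing composite into $Y/Y_j$ yields the same subquotient $Y_j/Y_{j-1}$ as the paper's ``first $Y_k$ through which $g$ factorizes''; the step you single out, the compatibility of $\phi$, is precisely the one the paper dispatches with the bare sentence ``Furthermore, it is compatible with arithmetic structures''. So you have located the only delicate point --- but neither of your two ways of settling it works as written. The mono half of ({\bf Ex}\ref{Axiom: ex composition ep mon implie un epi mono}) requires the factoring arrow $a':X_i/X_{i-1}\to V'$ to already possess a cokernel, and the standing hypothesis provides kernels of epimorphisms, not cokernels of monomorphisms (for the same reason the natural tool, Lemma~\ref{Lem:compose un mono est compatible implie compatible}, is unavailable here). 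And the fallback induction does not escape the difficulty: once the source is semistable, if the composite $X\to Y\twoheadrightarrow Y/Y_{m-1}$ vanishes you must factor $f$ through the admissible monomorphism $Y_{m-1}\rightarrowtail Y$, and the compatibility of that factorization is again a descent along a mono, so the induction does not reduce everything to $n=m=1$.

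The gap can, however, be closed inside your pullback construction by arguing on the epi side instead. Write $W=X_i/X_{i-1}$, let $a:W\rightarrowtail V$, $b:V\twoheadrightarrow Y/Y_{j-1}$ be the decomposition of $\iota\phi$, let $c:V\twoheadrightarrow V/W$ be the cokernel of $a$, and let $e:V\twoheadrightarrow Y/Y_j$ be the composed admissible epimorphism, so that $V'=\Ker e$ with inclusion $\kappa:V'\rightarrowtail V$, as you observe, and $ea=0$. Then $e$ factors as $e=fc$; the morphism $f:V/W\to Y/Y_j$ is an epimorphism, hence has a kernel by hypothesis, hence is an admissible epimorphism by the \emph{epi} half of ({\bf Ex}\ref{Axiom: ex composition ep mon implie un epi mono}). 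Let $k:K\rightarrowtail V/W$ be its kernel and $\rho:V'\to K$ the morphism with $k\rho=c\kappa$. A direct check shows $(V',\kappa,\rho)$ is the fiber product of $c$ and $k$, so $\rho$ is an admissible epimorphism by ({\bf Ex}\ref{Axiom: ex epi stable par probduit}); and $a'$ is a kernel of $\rho$ (if $\rho t=0$ then $c\kappa t=0$, so $\kappa t$ factors through $a=\Ker c$ and hence $t$ factors uniquely through $a'$), so $a'$ is an admissible monomorphism. The structure identities you need are then ({\bf A}\ref{Axiome:A fontorialite limage reciproque}) for ${a'}^*\kappa^*h_V=a^*h_V$, and ({\bf A}\ref{Axiome:A carre cartesien}) applied to the two cartesian squares ($V'$ over $V$, and $Y_j$ over $Y$) for the quotient structure on $Y_j/Y_{j-1}$; after that ({\bf HN}\ref{Axiom:HNA comparaison de pente pour des objet semistable}) finishes the proof exactly as you say.
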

\begin{proof}
Let
$\xymatrix{0=\overline X_0\ar[r]&\overline X_1\ar[r]&\cdots
\ar[r]&\overline X_{n-1}\ar[r]&\overline
X_n=\overline X}$ be the Harder-Narasimhan
sequence of $\overline X$. For any integer
$0\le i\le n$, let $h_i:\overline
X_i\rightarrow \overline X$ be the canonical
monomorphism. Let $1\le j\le n$ be the first
index such that $fh_j$ is non-zero. Since
$fh_{j-1}=0$, the morphism $fh_j$
factorizes through $X_j/ X_{j-1}$, so we get
a non-zero morphism $g$ from $X_j/X_{j-1}$ to
$Y$. After Lemma \ref{Lem:compose un epi est
compatible implie compatible}, $g$ is
compatible with arithmetic structures. Let
\[\xymatrix{0=\overline Y_0\ar[r]&\overline Y_1\ar[r]&\cdots\ar[r]&\overline Y_{m-1}\ar[r]&\overline Y_m}\]
be the Harder-Narasimhan sequence of
$\overline Y$. Let $1\le k\le n$ be the first
index such that $g$ factorizes through $Y_k$.
If $\pi:Y_k\rightarrow Y_k/Y_{k-1}$ is the
canonical morphism, then $\pi g$ is non-zero
since $g$ doesn't factorize through
$Y_{k-1}$. Furthermore, it is compatible with
arithmetic structures. Therefore, we have
\[\widehat{\mu}_{\min}(\overline X)\le
\widehat{\mu}(\overline{ X_j/
X_{j-1}})\le\widehat{\mu}(\overline{ Y_k/
Y_{k-1}})\le \widehat{\mu}_{\max}(\overline
Y).\]
\end{proof}

\begin{corollary}\label{Cor:inegality de pentes}
Keep the notations and the hypothesis of Proposition \ref{Pro:mu
max X ge mu min Y si X vers Y non nul arith}.
\begin{enumerate}[1)]
\item If in addition $f$ is monomorphic, then
$\widehat{\mu}_{\max}(\overline
X)\le\widehat{\mu}_{\max}(\overline Y)$.
\item If in addition $f$ is epimorphic, then $\widehat{\mu}_{\min}(\overline
X)\le\widehat{\mu}_{\min}(\overline Y)$.
\end{enumerate}
\end{corollary}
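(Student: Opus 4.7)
The plan is to reduce each part to a direct application of Proposition \ref{Pro:mu max X ge mu min Y si X vers Y non nul arith} by composing $f$ with an appropriate piece of the Harder-Narasimhan sequence of $\overline X$ or of $\overline Y$, chosen so as to turn a $\widehat\mu_{\min}$/$\widehat\mu_{\max}$ inequality into a $\widehat\mu_{\max}$/$\widehat\mu_{\max}$ (or $\widehat\mu_{\min}$/$\widehat\mu_{\min}$) one. The key observation is that for a semistable arithmetic object $\overline Z$ one has $\widehat\mu_{\min}(\overline Z)=\widehat\mu(\overline Z)=\widehat\mu_{\max}(\overline Z)$, and that both $\widehat\mu_{\max}(\overline X)$ and $\widehat\mu_{\min}(\overline Y)$ are realised as slopes of such semistable pieces (the destabilising subobject, respectively the last graded piece of the HN sequence).

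For 1), take $\overline X_{\des}\hookrightarrow\overline X$ with $\widehat\mu(\overline X_{\des})=\widehat\mu_{\max}(\overline X)$. The canonical inclusion is an admissible monomorphism with $h_{\des}=i^*(h)$, hence compatible with arithmetic structures; composing with $f$ yields a morphism $\overline X_{\des}\rightarrow\overline Y$ which is again compatible (composition of compatible morphisms) and which is non-zero because $f$ is monomorphic and $\overline X_{\des}$ is non-zero. Applying Proposition \ref{Pro:mu max X ge mu min Y si X vers Y non nul arith} gives
\[\widehat\mu_{\max}(\overline X)=\widehat\mu(\overline X_{\des})=\widehat\mu_{\min}(\overline X_{\des})\le\widehat\mu_{\max}(\overline Y).\]
For 2), let $0=\overline Y_0\to\overline Y_1\to\cdots\to\overline Y_m=\overline Y$ be the Harder-Narasimhan sequence of $\overline Y$ and consider the admissible epimorphism $\pi:\overline Y\twoheadrightarrow\overline{Y/Y_{m-1}}$, which is compatible with arithmetic structures since $\overline{Y/Y_{m-1}}$ carries the quotient structure. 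Compose with $f$ to get $\pi f:\overline X\rightarrow\overline{Y/Y_{m-1}}$, which is compatible and non-zero: $f$ is epimorphic and $\pi$ is non-zero, so $\pi f$ cannot vanish. The quotient $\overline{Y/Y_{m-1}}$ is semistable with slope $\widehat\mu_{\min}(\overline Y)$, so Proposition \ref{Pro:mu max X ge mu min Y si X vers Y non nul arith} yields
\[\widehat\mu_{\min}(\overline X)\le\widehat\mu_{\max}(\overline{Y/Y_{m-1}})=\widehat\mu(\overline{Y/Y_{m-1}})=\widehat\mu_{\min}(\overline Y).\]

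The only slightly delicate point is verifying that the composite morphisms remain compatible with arithmetic structures and, in 2), that $\pi f\neq 0$: for compatibility one uses property 3) after the definition of morphisms compatible with arithmetic structures, which itself rests on axiom ({\bf A}\ref{Axiome:A recollement}); non-vanishing of $\pi f$ uses that the image of an epimorphism is all of $\overline Y$, so cannot lie inside $\overline Y_{m-1}$ unless $\overline Y_{m-1}=\overline Y$, which would contradict $m\ge 1$. These are the points to watch, but no new axiom is needed beyond the proposition already established.
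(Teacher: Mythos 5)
Your proof is correct and follows essentially the paper's own argument: part 1) is exactly the paper's composition $f\circ i$ with the destabilizing subobject $\overline X_{\des}$, and part 2) spells out the dual argument (composing with the admissible epimorphism onto the last semistable quotient $\overline{Y/Y_{m-1}}$) that the paper summarizes as ``the proof of the other assertion is similar''. The non-vanishing and compatibility checks you flag are handled correctly, so nothing further is needed.
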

\begin{proof}
Suppose that $f$ is monomorphic. Let
$i:X_{\des}\rightarrow X$ be the canonical
morphism. Then the composed morphism
$fi:\overline X_{\des}\rightarrow\overline Y$
is non-zero and compatible with arithmetic
structures. Therefore
$\widehat{\mu}_{\max}(\overline
X)=\widehat{\mu}_{\min}(\overline X_{\des})
\le\widehat{\mu}_{\max}(\overline Y)$. The
proof of the other assertion is similar.
\end{proof}

If the arithmetic structure $A$ is trivial,
then any morphism in $\mathcal C$ is
compatible with arithmetic structures.
Therefore in this case we may remove the
hypothesis on the existence of kernels in
Proposition \ref{Pro:mu max X ge mu min Y si
X vers Y non nul arith} and in Corollary
\ref{Cor:inegality de pentes}. However, we
don't know whether in general case we can
remove the hypothesis that any epimorphism in
$\mathcal C$ has a kernel, although this
condition is fulfilled for all examples that
we have discussed in the previous section.

In the following, we give an example of
Harder-Narasimhan category, which will play
an important role in the next section. Let
$\mathcal C$ be an Abelian category and
$\mathcal E$ be the class of all short exact
sequences in $\mathcal C$. We suppose given a
rank function $\rang:K_0(\mathcal
C)\rightarrow\mathbb Z$. In this example we
take the totally ordered set $I$ as a subset
of $\mathbb R$ (with the induced order). For
any object $X$ in $\mathcal C$, let $A_0(X)$
be the set of isomorphism classes in
$\mathbf{Fil}_{X}^{I,\mathrm{self}}$. We have
shown in the previous section that $(\mathcal
C,\mathcal E,A_0)$ is an arithmetic exact
category. Any arithmetic object $\overline X=(X,h)$
of this arithmetic exact category may be
considered, after choosing a representative in
$h$, as an object $X$ in $\mathcal C$
equipped with an $\mathbb R$-filtration
$(X_\lambda)_{\lambda\in I}$ which is
separated, exhaustive, left continuous and of
finite length. We define a real
number\footnote{Here $\sup_\emptyset=0$ by
convention.}
\[\widehat{\deg}(\overline X)=
\sum_{\lambda\in I
}\lambda\Big(\rang(X_\lambda)-\sup_{
j>\lambda, j\in I }\rang(X_{j})\Big).\] The
summation above turns out to be finite since
the filtration is of finite length and its
value doesn't depend on the choice of the
representative in $h$. If $\overline
X=(X,(X_\lambda)_{\lambda\in I})$ and
$\overline Y=(Y,(Y_\lambda)_{\lambda\in I})$
are two arithmetic objects and if
$f:X\rightarrow Y$ is an isomorphism which is
compatible with arithmetic structures, then
for any $\lambda\in I$, we have
$\rang(X_\lambda)\le\rang(Y_\lambda)$.
Therefore we have $\widehat{\deg}(\overline X
)\le\widehat{\deg}(\overline Y)$ by Abel's
summation formula.

We now show that the function
$\widehat{\deg}$ defined above extends
naturally to a homomorphism from
$K_0(\mathcal C,\mathcal E,A_0)$ to $\mathbb
R$. Let
\[\xymatrix{\relax 0\ar[r]&X'\ar[r]^-{u}&X\ar[r]^-{p}&X''\ar[r]&0}\]
be a short exact sequence in $\mathcal C$.
Suppose that $\mathcal
F'=(X'_{\lambda})_{\lambda\in I}$ (resp.
$\mathcal F=(X_\lambda)_{\lambda\in I}$,
$\mathcal F''=(X_{\lambda}'')_{\lambda\in
I}$) is an $\mathbb R$-filtration of $X'$
(resp. $X$, $X''$) which is separated,
exhaustive, left continuous and of finite
length, and such that $\mathcal
F'=u^*(\mathcal F)$, $\mathcal
F''=p_*(\mathcal F)$. Then for any real
number $\lambda\in I$ we have a canonical
exact sequence
\[\xymatrix{0\ar[r]&X_\lambda'\ar[r]&
X_\lambda\ar[r]&X_\lambda''\ar[r]&0}.\]
Therefore, $\widehat{\deg}(X,[\mathcal
F])=\widehat{\deg}(X',[\mathcal
F'])+\widehat{\deg}(X'',[\mathcal F''])$.
Notice that an non-zero arithmetic object
$\overline X=(X,[\mathcal F])$ is semistable
if and only if the filtration $\mathcal F$
has a jumping set which reduces to a one
point set. If $\overline X$ is semistable
and if $\{\lambda\}$ is a jumping set of
$\mathcal F$, then the arithmetic slope of
$\overline X$ is just $\lambda$. Therefore,
if $\overline X= (X,[\mathcal F])$ and
$\overline Y=(Y,[\mathcal G])$ are two
semistable arithmetic objects such that
$\lambda:=\widehat{\mu}(\overline
X)>\widehat{\mu}(\overline Y)$, then any
morphism $f:X\rightarrow Y$ which is
compatible with filtrations sends $\mathcal
F(\lambda)=X$ into $\mathcal G(\lambda)=0$,
therefore is the zero morphism.

If $\overline X=(X,[\mathcal F])$ is a
non-zero arithmetic object, we denote by
$X_{\des}$ the non-zero object in the
filtration $\mathcal F$ having the maximal
index. The existence of $X_{\des}$ is
justified by the finiteness and the left
continuity of $\mathcal F$. The arithmetic
subobject $\overline X_{\des}$ of $\overline
X$ is semistable. Furthermore, for any
non-zero arithmetic subobject $\overline
Y=(Y,[\mathcal G])$ of $\overline X$, we have
\[\begin{split}\widehat{\mu}(\overline{Y})
&=\frac{1}{\rang(Y)} \sum_{\lambda\in I
}\lambda\Big(\rang(\mathcal
G(\lambda))-\sup_{j>\lambda,j\in I
}\rang(\mathcal G(j))\Big)\\&\le
\frac{1}{\rang(Y)}\sum_{\lambda\in I
}\widehat{\mu}(\overline
X_{\des})\Big(\rang(\mathcal
G(\lambda))-\sup_{j>\lambda,j\in I
}\rang(\mathcal
G(j))\Big)=\widehat{\mu}(\overline
X_{\des}).\end{split}\] The equality holds if
and only if $\overline Y$ is semistable and
of slope $\widehat{\mu}(\overline X_{\des})$,
in this case, the canonical morphism from $Y$
to $X$ factorizes through $X_{\des}$ since it
is compatible with filtrations. Hence we have
proved that $(\mathcal C,\mathcal
E,A_0,\widehat{\deg},\rang)$ is a
Harder-Narasimhan category.

Suppose that $\overline X=(X,[\mathcal F])$
is a non-zero arithmetic object, where
$\mathcal F=(X_\lambda)_{\lambda\in\mathbb
R}$. If
$E=\{\lambda_1>\lambda_2>\cdots>\lambda_n\}$
is the minimal jumping set of $\mathcal F$
(i.e. the intersection of all jumping sets of
$\mathcal F$, which is itself a jumping set
of $\mathcal F$), then
\[\xymatrix{0\ar[r]&X_{\lambda_1}\ar[r]&
X_{\lambda_2}\ar[r]&\cdots\ar[r]&X_{\lambda_n}=X}\]
is the Harder-Narasimhan sequence of
$\overline X$. Furthermore,
$\widehat{\mu}(\overline
X_{\lambda_1})=\lambda_1$, and for any $2\le
i\le n$, $\widehat{\mu}(\overline{
X_{\lambda_{i}}/
X_{\lambda_{i-1}}})=\lambda_i$.

\section{Harder-Narasimhan filtrations and polygons}

\hskip\parindent We fix in this section a
Harder-Narasimhan category $(\mathcal
C,\mathcal E,A,\widehat{\deg},\rang)$. We
shall introduce the notions of
Harder-Narasimhan filtrations and
Harder-Narasimhan measures for an arithmetic
object in $(\mathcal C,\mathcal
E,A,\widehat{\deg},\rang)$. We shall also
explain that if $\mathcal D$ is an Abelian
category equipped with a rank function and if
there exists an exact functor $F:\mathcal
C\rightarrow\mathcal D$ which preserves rank
functions, then for any non-zero arithmetic
object $\overline X$ in $\mathcal C$, the
Harder-Narasimhan filtration of $\overline X$
induces a filtration of $F(X)$, which defines
an arithmetic object $\overline{F(X)}$ of the
Harder-Narasimhan category defined by
$\mathbb R$-filtrations in $\mathcal D$ which
are separated, exhaustive, left continuous
and of finite length. Furthermore, the
Harder-Narasimhan polygon (resp. measure) of
$\overline{F(X)}$ coincides with that of
$\overline X$. Therefore, filtered objects in
Abelian categories equipped with rank
functions can be considered in some sense as
models to study Harder-Narasimhan polygons.

\begin{proposition}\label{Pro:Construction de la filtration de Harder-Narasimhan arith}
Let $\overline X$ be a non-zero arithmetic
object and
\[\xymatrix{0=X_0^{\HN}\ar[r]&X_1^{\HN}
\ar[r]&\cdots\ar[r]& X_{n-1}^{\HN}\ar[r]&
X_n^{\HN}=X}\] be its Harder-Narasimhan
sequence. If for any real number $\lambda$ we
denote by\footnote{By convention
$\max\emptyset=0$.}
\[i_{\overline X}(\lambda)=\max\{1\le i\le n\;|\;
\widehat{\mu}(\overline X_i^{\HN}/\overline
X_{i-1}^{\HN})\ge\lambda\}\] and $ X_\lambda=
X_{i_{\overline X}(\lambda)}^{\HN}$, then
$(X_\lambda)_{\lambda\in\mathbb R}$ is an
$\mathbb R$-filtration of the object $X$ in
$\mathcal C$. Furthermore, this filtration is
separated, exhaustive, left continuous and of
finite length.
\end{proposition}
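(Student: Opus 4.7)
The strategy is to unpack the function $i_{\overline X}$ using the strict monotonicity of the successive slopes in the Harder-Narasimhan sequence, and then verify each property in turn using the finite-length framework of Section 2.

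First I would introduce the shorthand $\mu_i := \widehat{\mu}(\overline{X_i^{\HN}/X_{i-1}^{\HN}})$ for $1 \le i \le n$, together with the conventions $\mu_0 := +\infty$ and $\mu_{n+1} := -\infty$. Since $\mu_1 > \mu_2 > \cdots > \mu_n$ by Theorem \ref{Thm:suite de Harder-Narasimhan arithmetique}, the set $\{1\le i\le n : \mu_i \ge \lambda\}$ is an initial segment of $\{1,\dots,n\}$, so $i_{\overline X}(\lambda)=k$ holds exactly when $\mu_{k+1}<\lambda\le\mu_k$. Hence $i_{\overline X}$ is a non-increasing step function, constant on each interval $(\mu_{k+1},\mu_k]$, with all its jumps in the finite set $\{\mu_1,\dots,\mu_n\}$.

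Next I would promote this to a functor $\mathcal F:\mathbb R^*\rightarrow\mathcal C$. Setting $i_{\overline X}(-\infty):=n$, the non-increase property of $i_{\overline X}$ gives, for any $\lambda\ge\mu$ in $\mathbb R^*$, an admissible monomorphism $X_{i_{\overline X}(\lambda)}^{\HN}\hookrightarrow X_{i_{\overline X}(\mu)}^{\HN}$ obtained by composing consecutive admissible monomorphisms of the Harder-Narasimhan sequence; functoriality is immediate because all these are components of a single chain of inclusions, and the resulting transition maps are monomorphisms. The finite-length property is then witnessed by the jumping set $I_0=\{\mu_1,\dots,\mu_n\}$: if $j<i$ in $\mathbb R$ with $[j,i]\cap I_0=\emptyset$, both $j$ and $i$ lie in a common open interval $(\mu_{k+1},\mu_k)$, on which $i_{\overline X}$ is constant, so $\mathcal F(u_{ij})=\Id$.

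For left continuity I would invoke the second half of the proposition equating left continuity with left local constancy for filtrations of finite length: at any $\lambda\in\mathbb R$, writing $k=i_{\overline X}(\lambda)$ so that $\mu_{k+1}<\lambda\le\mu_k$, the set $(\mu_{k+1},\lambda)$ is non-empty by density of $\mathbb R$, and $i_{\overline X}$ is constant on $(\mu_{k+1},\lambda]$, so $\mathcal F$ is left locally constant at $\lambda$. Finally, exhaustiveness and separatedness follow from the criterion recalled at the end of Section 2 for finite-length filtrations: for $\lambda<\mu_n$ one has $\mathcal F(\lambda)=X_n^{\HN}=X$, while for $\lambda>\mu_1$ one has $\mathcal F(\lambda)=X_0^{\HN}=0$, the initial (zero) object of the ambient exact category. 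The only real subtlety is book-keeping: the filtration is covariant on $I^*$ while the underlying subobjects shrink as $\lambda$ grows, so one must consistently match each value of $\lambda$ with the correct index in the Harder-Narasimhan sequence; once the equivalence $i_{\overline X}(\lambda)=k\iff\mu_{k+1}<\lambda\le\mu_k$ is in place, everything else is routine.
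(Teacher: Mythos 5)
Your proposal is correct and follows essentially the same route as the paper: monotonicity of $\lambda\mapsto i_{\overline X}(\lambda)$ gives the filtration, its values lying in the finite set $\{X_0^{\HN},\dots,X_n^{\HN}\}$ give finite length, the behaviour for $\lambda>\widehat{\mu}_{\max}(\overline X)$ and $\lambda<\widehat{\mu}_{\min}(\overline X)$ gives separatedness and exhaustiveness, and left continuity is reduced to left local constancy of $i_{\overline X}$. Your interval characterization $i_{\overline X}(\lambda)=k\iff\mu_{k+1}<\lambda\le\mu_k$ merely streamlines the paper's three-case $\varepsilon$-argument for that last step.
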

\begin{proof}
If $\lambda>\lambda'$, then $i_{\overline
X}(\lambda)\le i_{\overline X}(\lambda')$,
hence $({X}_\lambda)_{\lambda\in\mathbb R}$
is an $\mathbb R$-filtration of $X$.
Moreover, for any $\lambda\in\mathbb R$,
$X_\lambda\in\{ X_0^{\HN},\cdots,
X_n^{\HN}\}$, therefore this filtration is of
finite length. When
$\lambda>\widehat{\mu}_{\max}(\overline X)$, we have
$i_{\overline X}(\lambda)=0$, which implies that $\overline
X_{\lambda}=\overline X_0^{\HN}=0$ is the
zero object, so the filtration is separated.
When $\lambda<\widehat{\mu}_{\min}(\overline
X)$, $i_{\overline X}(\lambda)=n$, so
$\overline X_{\lambda}=\overline X$, i.e.,
the filtration is exhaustive. To prove the
left continuity of this filtration, it
suffices to verify that the function
$\lambda\mapsto i_{\overline X}(\lambda)$ is
left continuous. Actually, this function is
left locally constant: if $i_{\overline
X}(\lambda)=0$, then for any integer $1\le
i\le n$, we have $\widehat{\mu}(\overline
X_i^{\HN}/\overline X_{i-1}^{\HN})<\lambda$,
so there exists $\varepsilon_0>0$ such that
for any $0\le\varepsilon<\varepsilon_0$, we
have $\widehat{\mu}(\overline
X_i^{\HN}/\overline
X_{i-1}^{\HN})<\lambda-\varepsilon$, i.e.,
$i_{\overline X}(\lambda-\varepsilon)=0$; if
$i_{\overline X}(\lambda)=n$, then for any
integer $1\le i\le n$ and any real number
$\varepsilon\ge 0$, we have
$\widehat{\mu}(\overline X_i^{\HN}/\overline
X_{i-1}^{\HN})\ge\lambda\ge\lambda-\varepsilon$,
so $i_{\overline X}(\lambda-\varepsilon)=n$;
finally if $1\le i_{\overline X}(\lambda)\le
n-1$, then we have $\widehat{\mu}(\overline
X_{i_{\overline X}(\lambda)}^{\HN}/\overline
X_{i_{\overline
X}(\lambda)-1}^{\HN})\ge\lambda$ and
$\widehat{\mu}(\overline X_{i_{\overline
X}(\lambda)+1}^{\HN}/\overline
X_{i_{\overline X}(\lambda)}^{\HN})<\lambda$,
hence there exists $\varepsilon_0>0$ such
that, for any $0\le
\varepsilon<\varepsilon_0$, we have
$\widehat{\mu}(\overline X_{i_{\overline
X}(\lambda)}^{\HN}/\overline X_{i_{\overline
X}(\lambda)-1}^{\HN})\ge\lambda-\varepsilon$
and $\widehat{\mu}(\overline X_{i_{\overline
X}(\lambda)+1}^{\HN}/\overline
X_{i_{\overline
X}(\lambda)}^{\HN})<\lambda-\varepsilon$,
i.e., $i_{\overline
X}(\lambda-\varepsilon)=i_{\overline
X}(\lambda)$.
\end{proof}

\begin{definition}
With the notations of Proposition
\ref{Pro:Construction de la filtration de
Harder-Narasimhan arith}, the filtration
$(X_\lambda)_{\lambda\in\mathbb R}$ is called
the {\it Harder-Narasimhan filtration } (or
{\it canonical filtration}) of $\overline X$,
denoted by $\HN(\overline X)$. Clearly,
$\widehat{\mu}_{\min}(\overline X_\lambda)\ge
\lambda$ for any $\lambda\in\mathbb R$. We
define the {\it Harder-Narasimhan filtration}
(or {\it canonical filtration}) of the zero
object to be its only $\mathbb R$-filtration
which associates to each $\lambda\in\mathbb
R$ the zero object itself.
\end{definition}

\begin{theorem} Keep the notations of
Proposition \ref{Pro:Construction de la
filtration de Harder-Narasimhan arith}.
Suppose in addition that any epimorphism in $\mathcal C$ has a
kernel in the case where $A$ is non-trivial. Then any morphism in $\mathcal C_A$
is compatible with Harder-Narasimhan
filtrations.\label{Thm:compatibilite avec la
filtration HN arith}
\end{theorem}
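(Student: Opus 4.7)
The plan is to show that for every $\lambda\in\mathbb R$ the restriction of $f$ to the admissible subobject $X_\lambda\hookrightarrow X$ factors through the admissible subobject $Y_\lambda\hookrightarrow Y$. Because $Y_\lambda$ is the kernel of the admissible epimorphism $\pi_\lambda:Y\to Y/Y_\lambda$ by Axiom~({\bf Ex}\ref{Axiom:ex epi et mono dans une suite exacte}), this reduces to showing that the composite
\[g_\lambda:\ X_\lambda\hookrightarrow X\xrightarrow{\;f\;}Y\xrightarrow{\;\pi_\lambda\;}Y/Y_\lambda\]
vanishes for every $\lambda$. Once this is known, the resulting morphisms $f_\lambda:X_\lambda\to Y_\lambda$ are automatically compatible across $\lambda$ by the monomorphicity of each $Y_\mu\hookrightarrow Y$, and together they form the desired natural transformation $\HN(\overline X)\to\HN(\overline Y)$ extending $f$ at $-\infty$.

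The composite $g_\lambda$ is compatible with arithmetic structures: the admissible monomorphism $X_\lambda\hookrightarrow X$ and the admissible epimorphism $\pi_\lambda$ are compatible by the first observation recorded after Definition~\ref{Def:categorie exacte arithematque}, $f$ is compatible by hypothesis, and compatibility is closed under composition by the third observation there. Assume for contradiction that $g_\lambda\neq 0$. Then Proposition~\ref{Pro:mu max X ge mu min Y si X vers Y non nul arith}, whose hypothesis on the existence of kernels of epimorphisms is precisely the supplementary assumption of the theorem, yields
\[\widehat\mu_{\min}(\overline X_\lambda)\le\widehat\mu_{\max}(\overline{Y/Y_\lambda}).\]
Since the discussion just after the definition of $\HN(\overline X)$ gives $\widehat\mu_{\min}(\overline X_\lambda)\ge\lambda$, it remains only to prove the complementary bound $\widehat\mu_{\max}(\overline{Y/Y_\lambda})<\lambda$ in order to reach a contradiction.

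The principal obstacle is this complementary bound, and I would establish it by identifying the Harder-Narasimhan sequence of $\overline{Y/Y_\lambda}$ explicitly. Writing the Harder-Narasimhan sequence of $\overline Y$ as $0=Y_0^{\HN}\to\cdots\to Y_m^{\HN}=Y$ and setting $k=i_{\overline Y}(\lambda)$, so that $Y_\lambda=Y_k^{\HN}$, the definition of $i_{\overline Y}(\lambda)$ forces $\widehat\mu(\overline{Y_{i+1}^{\HN}/Y_i^{\HN}})<\lambda$ for every $i\ge k$. The chain
\[0=Y_k^{\HN}/Y_k^{\HN}\to Y_{k+1}^{\HN}/Y_k^{\HN}\to\cdots\to Y_m^{\HN}/Y_k^{\HN}=Y/Y_\lambda\]
consists of admissible monomorphisms in $\mathcal C$, and a direct application of Axiom~({\bf A}\ref{Axiome:A carre cartesien}) to the cartesian-cocartesian square with corners $Y_i^{\HN}$, $Y_{i+1}^{\HN}$, $Y_i^{\HN}/Y_k^{\HN}$, $Y_{i+1}^{\HN}/Y_k^{\HN}$ shows that the successive sub-quotients of the displayed chain are canonically isomorphic, as arithmetic objects, to $\overline{Y_{i+1}^{\HN}/Y_i^{\HN}}$. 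These sub-quotients are therefore semistable with strictly decreasing slopes, all $<\lambda$, so by the uniqueness clause of Theorem~\ref{Thm:suite de Harder-Narasimhan arithmetique} the displayed chain is the Harder-Narasimhan sequence of $\overline{Y/Y_\lambda}$. In particular $\widehat\mu_{\max}(\overline{Y/Y_\lambda})=\widehat\mu(\overline{Y_{k+1}^{\HN}/Y_k^{\HN}})<\lambda$, giving the desired contradiction. The edge cases $X_\lambda=0$ and $k=m$ (so $Y/Y_\lambda=0$) are immediate.
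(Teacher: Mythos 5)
Your argument is correct in substance, but it takes a genuinely different route from the paper's. The paper fixes $\lambda$ and pushes the factorization of $f$ restricted to $X_\lambda$ down the flag of $Y$ by a descending induction: if it factors through $Y^{\HN}_j$ with $j>i_{\overline Y}(\lambda)$, the composite into the semistable quotient $Y^{\HN}_j/Y^{\HN}_{j-1}$ vanishes by the slope comparison of Proposition \ref{Pro:mu max X ge mu min Y si X vers Y non nul arith}, and the kernel property of $Y^{\HN}_{j-1}\rightarrowtail Y^{\HN}_j$ drops the factorization one step; only objects already present in the Harder-Narasimhan data of $\overline Y$ are ever used. You instead reduce everything to the single vanishing $\pi_\lambda\circ f|_{X_\lambda}=0$ and prove the complementary bound $\widehat{\mu}_{\max}(\overline{Y/Y_\lambda})<\lambda$ by exhibiting the entire Harder-Narasimhan sequence of the quotient $\overline{Y/Y_\lambda}$ and invoking the uniqueness clause of Theorem \ref{Thm:suite de Harder-Narasimhan arithmetique}. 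What this buys is a cleaner compatibility argument: your $g_\lambda$ is a composite of three morphisms each compatible with arithmetic structures by the observations following Definition \ref{Def:categorie exacte arithematque}, whereas the paper's intermediate composites require a slightly more delicate appeal to Proposition \ref{Pro:mu max X ge mu min Y si X vers Y non nul arith} and the remark after its proof. What it costs are two points you pass over quickly. First, the assertion that each $Y^{\HN}_i/Y^{\HN}_k\rightarrowtail Y^{\HN}_{i+1}/Y^{\HN}_k$ is an admissible monomorphism with cokernel canonically $Y^{\HN}_{i+1}/Y^{\HN}_i$ is the Noether-type lemma for exact categories; it is true, but it is nowhere stated or proved in this paper and does require an argument from the axioms ({\bf Ex}\ref{Axiom:ex mono stable par coproduit})--({\bf Ex}\ref{Axiom: ex epi stable par probduit}). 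Second, the one square you cite, with corners $Y^{\HN}_i$, $Y^{\HN}_{i+1}$, $Y^{\HN}_i/Y^{\HN}_k$, $Y^{\HN}_{i+1}/Y^{\HN}_k$, does not by itself identify the arithmetic structures on the subquotients of your chain: you also need ({\bf A}\ref{Axiome:A carre cartesien}) applied to the square with corners $Y^{\HN}_{i+1}$, $Y$, $Y^{\HN}_{i+1}/Y^{\HN}_k$, $Y/Y_\lambda$, which shows that the structure induced on a chain member from $\overline{Y/Y_\lambda}$ is the pushforward of the structure induced on $Y^{\HN}_{i+1}$ from $\overline Y$, and then ({\bf A}\ref{Axiome:A fonctorialite limage directe}) to match the two quotient structures on the subquotient. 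With these two points supplied your proof is complete; the paper's induction avoids both by never forming the quotient filtration at all.
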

\begin{proof}
Let $f:X\rightarrow Y$ be a morphism which is
compatible with arithmetic structures. The
case where $X$ or $Y$ is zero is trivial. We
now suppose that $X$ and $Y$ are non-zero.
Let
\[\xymatrix{0=X_0^{\HN}\ar[r]&X_1^{\HN}\ar[r]
&\cdots\ar[r]&X_{n-1}^{\HN}\ar[r]&
X_n^{\HN}=X}\] be the Harder-Narasimhan
sequence of $\overline X$ and
\[\xymatrix{0=Y_0^{\HN}\ar[r]&Y_1^{\HN}
\ar[r]&\cdots\ar[r]&Y_{m-1}^{\HN}\ar[r]&
Y_m^{\HN}=Y}\] be the Harder-Narasimhan sequence of $Y$. For all
integers $0\le i<j\le m$, let $P_{j,i}$ be
the canonical morphism from $Y_j^{\HN}$ to
$Y^{\HN}_j/Y^{\HN}_i$. For any integer $0\le
i\le n$, let $U_i$ be the canonical
monomorphism from $X^{\HN}_i$ to $X$. Suppose
that $\lambda$ is a real number. If
$i_{\overline X}(\lambda)=0$ or if
$i_{\overline Y}(\lambda)=0$, we define
$F_\lambda$ as the zero morphism from
$X_\lambda$ to $Y_\lambda$; if $i_{\overline
Y}(\lambda)=m$, we have $\overline
Y_\lambda=\overline Y$ and we define
$F_\lambda$ as the composition
$fU_{i_X(\lambda)}$; otherwise we have
$\widehat{\mu}(\overline X_{i_{\overline
X}(\lambda)}^{\HN}/\overline
X^{\HN}_{i_{\overline
X}(\lambda)-1})\ge\lambda$ and
$\widehat{\mu}(\overline
Y^{\HN}_{i_{\overline Y}(\lambda)}/\overline
Y^{\HN}_{i_{\overline
Y}(\lambda)-1})\ge\lambda$, but
$\widehat{\mu}(\overline
Y^{\HN}_{j}/\overline Y^{\HN}_{j-1})<\lambda$
for any $j>i_{\overline Y}(\lambda)$. We will
prove by induction that the morphism
$fU_{i_{\overline X}(\lambda)}$ factorizes
through $Y^{\HN}_{i_{\overline Y}(\lambda)}$.
First it is obvious that the morphism
$fU_{i_{\overline X}(\lambda)}$ factorizes
through $Y^{\HN}_m=Y$. If it factorizes
through certain
$\varphi_j:X^{\HN}_{i_{\overline
X}(\lambda)}\rightarrow Y^{\HN}_j$, where
$j>i_{\overline Y}(\lambda)$, then the
composition $P_{j,j-1}\varphi_j$ must be zero
since (see Proposition \ref{Pro:mu max X ge
mu min Y si X vers Y non nul arith} and the
remark after its proof)
\[\widehat{\mu}(\overline
Y^{\HN}_j/\overline
Y^{\HN}_{j-1})<\lambda\le\widehat{\mu}(\overline
X^{\HN}_{i_{\overline X}(\lambda)}/\overline
X^{\HN}_{i_{\overline
X}(\lambda)-1})=\widehat{\mu}_{\min}(\overline
X^{\HN}_{i_{\overline X}(\lambda)}).
\]
So the morphism $fU_{i_{\overline
X}(\lambda)}$ factorizes through
$Y_{j-1}^{\HN}$. By induction we obtain that
$fU_{i_{\overline X}(\lambda)}$ factorizes
(in unique way) through a morphism
$F_\lambda:X_{i_{\overline
X}(\lambda)}\rightarrow Y_{i_{\overline
Y}(\lambda)}$. The family of morphisms
$F=(F_\lambda)_{\lambda\in\mathbb R}$ defines
a natural transformation such that $(F,f)$ is
a morphism of filtrations. Therefore the
morphism $f$ is compatible with
Harder-Narasimhan filtrations.
\end{proof}

\begin{remark}
Theorem \ref{Thm:compatibilite avec la
filtration HN arith} implies that $\HN$ defines
actually a functor from the category
$\mathcal C_A$ to the full sub-category
$\Fil^{\mathbb R,\mathrm{self}}(\mathcal C)$
of $\Fil^{\mathbb R}(\mathcal C)$ consisting
of $\mathbb R$-filtrations which are
separated, exhaustive, left continuous and of
finite length, which sends an arithmetic
object $\overline X$ to its Harder-Narasimhan
filtration.
\end{remark}

\begin{corollary} Suppose in the case where
$A$ is non-trivial that any epimorphism in
$\mathcal C$ has a kernel. Let $\overline X$
and $\overline Y$ be two arithmetic objects
and $f:Y\rightarrow X$ be a morphism which is
compatible with arithmetic structures. If
$\widehat{\mu}_{\min}(\overline
Y)\ge\lambda$, then the morphism $f$
factorizes through $X_{\lambda}$.
\end{corollary}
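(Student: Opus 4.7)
The plan is to apply Theorem \ref{Thm:compatibilite avec la filtration HN arith} directly and then observe that, under the hypothesis $\widehat{\mu}_{\min}(\overline Y) \ge \lambda$, the Harder-Narasimhan filtration of $\overline Y$ at level $\lambda$ is the whole of $Y$.

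First I would unpack the definition of $Y_{\lambda}$. By Proposition \ref{Pro:Construction de la filtration de Harder-Narasimhan arith}, $Y_{\lambda} = Y^{\HN}_{i_{\overline Y}(\lambda)}$. Let $m$ denote the length of the Harder-Narasimhan sequence of $\overline Y$. Since $\widehat{\mu}_{\min}(\overline Y) = \widehat{\mu}(\overline{Y^{\HN}_m/Y^{\HN}_{m-1}})$ and the successive slopes of the Harder-Narasimhan sequence are strictly decreasing, the condition $\widehat{\mu}_{\min}(\overline Y) \ge \lambda$ implies $\widehat{\mu}(\overline{Y^{\HN}_i/Y^{\HN}_{i-1}}) \ge \lambda$ for every $1 \le i \le m$, hence $i_{\overline Y}(\lambda) = m$ and therefore $Y_{\lambda} = Y^{\HN}_m = Y$. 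The case $Y = 0$ is trivial.

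Next I would invoke Theorem \ref{Thm:compatibilite avec la filtration HN arith} applied to $f: \overline Y \to \overline X$: there exists a morphism of $\mathbb R$-filtrations $F: \HN(\overline Y) \to \HN(\overline X)$ with $F(-\infty) = f$. In particular at the index $\lambda$, we obtain a morphism $F_\lambda: Y_{\lambda} \to X_{\lambda}$ and a commutative square
\[
\xymatrix{Y_{\lambda} \ar[r]^{F_\lambda} \ar[d]_{j_Y} & X_{\lambda} \ar[d]^{j_X} \\ Y \ar[r]_{f} & X}
\]
where $j_Y$ and $j_X$ are the canonical admissible monomorphisms of the filtrations. Combining this with the previous step, $j_Y$ is the identity of $Y$, so $f = j_X \circ F_\lambda$, which is exactly the desired factorization through $X_{\lambda}$.

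There is no real obstacle here: the main theorem has already done the work. The only subtlety is a bookkeeping one — making sure that ``$\widehat{\mu}_{\min}(\overline Y) \ge \lambda$'' translates correctly to $i_{\overline Y}(\lambda) = m$ (so $Y_\lambda = Y$), which follows from the strict decrease of successive slopes in the Harder-Narasimhan sequence together with the convention $\widehat{\mu}_{\min}(0) = +\infty$ for the zero case.
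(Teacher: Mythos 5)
Your proof is correct and follows the same route as the paper: invoke Theorem \ref{Thm:compatibilite avec la filtration HN arith} to get compatibility of $f$ with the Harder-Narasimhan filtrations, then note that $\widehat{\mu}_{\min}(\overline Y)\ge\lambda$ forces $Y_\lambda=Y$, so the component of the filtration morphism at $\lambda$ gives the desired factorization through $X_\lambda$. Your extra bookkeeping with $i_{\overline Y}(\lambda)=m$ just makes explicit what the paper states in one line.
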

\begin{proof}
Since $f$ is compatible with arithmetic
structures, it is compatible with
Harder-Narasimhan filtrations. So the
restriction of $f$ on $Y_{\lambda}$
factorizes through $X_\lambda$. As
$\widehat{\mu}_{\min}(\overline
Y)\ge\lambda$, we have $Y_\lambda=Y$,
therefore $f$ factorizes through $X_\lambda$.
\end{proof}

Let $\overline X$ be a non-zero arithmetic
object and
\[\xymatrix{0=
X_0^{\HN}\ar[r]& X_1^{\HN}
\ar[r]&\cdots\ar[r]& X_{n-1}^{\HN}\ar[r]&
X_n^{\HN}=X}\] be its Harder-Narasimhan
sequence. For any integer $0\le i\le n$, we
note $t_i=\rang X_i^{\HN}/\rang X$. For any
integer $1\le i\le n$, we note
${\lambda}_i=\widehat{\mu}(X_{i}^{\HN}/X_{i-1}^{\HN})$.
Then the function
\[P_{\overline X}(t)=\sum_{i=1}^{n}
\left(\frac{\widehat{\deg} (\overline
X^{\HN}_{i-1})}{\rang X}+
{\lambda}_i(t-t_{i-1})\right)\indic_{[t_{i-1},
t_{i}]}(t)\] is a polygon\footnote{Namely a
concave function having value $0$ at the origin and
which is piecewise linear.} on $[0,1]$,
called the {\it normalized Harder-Narasimhan
polygon} of $\overline X$. The function
$P_{\overline X}$ takes value $0$ at the
origin, and its first order derivative is
given by
\[P_{\overline X}'(t)=\sum_{i=1}^n\lambda_i
\indic_{[t_{i-1},t_i[}(t).\] The probability
measure
\[\nu_{\overline X}:=\sum_{i=1}^n
\frac{\rang(X_{i}^{\HN})-\rang(
X_{i-1}^{\HN})}{\rang
X}\delta_{{\lambda}_i}=\sum_{i=1}^n(t_i-t_{i-1})
\delta_{\lambda_i}\] is called the {\it
Harder-Narasimhan measure} of $\overline X$.
We define the Harder-Narasimhan measure of
the zero arithmetic object to be the zero
measure on $\mathbb R$. After Proposition
\ref{Pro:Construction de la filtration de
Harder-Narasimhan arith}, if $\overline X$ is
a non-zero arithmetic object and if
$(X_\lambda)_{\lambda\in\mathbb R}$ is the
Harder-Narasimhan filtration of $\overline
X$, then the Harder-Narasimhan measure
$\nu_{\overline X}$ of $\overline X$ is the
first order derivative (in distribution
sense) of the function $t\longmapsto
-\rang(X_t)$. Finally we point out that the
Harder-Narasimhan polygon of a non-zero
arithmetic object $\overline X$ can be
uniquely determined in an explicit way from
its Harder-Narasimhan measure.

\begin{proposition}\label{Pro:comparaison de
slpes} Suppose in the case where $A$ is
non-trivial that any epimorphism in $\mathcal
C$ has a kernel. If $\overline X$ and
$\overline Y$ are two non-zero arithmetic
objects and if $f:X\rightarrow Y$ is an
isomorphism which is compatible to arithmetic
structures, then $\widehat{\mu}(\overline
X)\le\widehat{\mu}(\overline Y)$, and
therefore $\widehat{\deg}(\overline
X)\le\widehat{\deg}(\overline Y)$.
\end{proposition}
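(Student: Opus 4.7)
The plan is to apply Theorem~\ref{Thm:compatibilite avec la filtration HN arith} to $f$ and then to deduce the numerical inequality from the Harder--Narasimhan filtrations via a Riemann--Stieltjes integration by parts. Write $(X_\lambda)_{\lambda\in\mathbb R}=\HN(\overline X)$ and $(Y_\lambda)_{\lambda\in\mathbb R}=\HN(\overline Y)$, and denote by $U_\lambda:X_\lambda\rightarrowtail X$ and $V_\lambda:Y_\lambda\rightarrowtail Y$ the canonical admissible monomorphisms. Since $f$ is an isomorphism compatible with arithmetic structures, Theorem~\ref{Thm:compatibilite avec la filtration HN arith} yields, for each $\lambda\in\mathbb R$, a morphism $F_\lambda:X_\lambda\to Y_\lambda$ with $V_\lambda F_\lambda=fU_\lambda$.

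The crucial intermediate step is to upgrade this factorization to the pointwise rank comparison
\[
\rang(X_\lambda)\le\rang(Y_\lambda)\qquad(\lambda\in\mathbb R).
\]
Since $fU_\lambda$ is an admissible monomorphism (composition of the admissible monomorphism $U_\lambda$ with the isomorphism $f$), and since it factors through the admissible subobject $V_\lambda$, axiom $(\mathbf{Ex}\ref{Axiom: ex composition ep mon implie un epi mono})$ forces $F_\lambda$ itself to be an admissible monomorphism provided $F_\lambda$ admits a cokernel. The running hypothesis that every epimorphism in $\mathcal C$ has a kernel (in the non-trivial case) supplies enough abelian-like structure to produce this cokernel; admissibility of $F_\lambda$ then gives a short exact sequence $0\to X_\lambda\to Y_\lambda\to Y_\lambda/F_\lambda(X_\lambda)\to 0$ in $\mathcal E$ and therefore the required inequality on ranks. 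This is the delicate point of the whole argument; what follows is formal.

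Once the pointwise comparison is established, I conclude as in the filtered-object example at the end of Section~4. The function $\lambda\mapsto\rang(X_\lambda)$ is left continuous, non-increasing, vanishes for $\lambda>\widehat{\mu}_{\max}(\overline X)$ and equals $\rang(X)$ for $\lambda\le\widehat{\mu}_{\min}(\overline X)$; choosing $a$ below and $b$ above all jumps of both Harder--Narasimhan filtrations, a Riemann--Stieltjes integration by parts applied to the identity $\widehat{\deg}(\overline X)=\sum_{i}\lambda_i\bigl(\rang(X_i^{\HN})-\rang(X_{i-1}^{\HN})\bigr)$ (and its analogue for $\overline Y$) gives
\[
\widehat{\deg}(\overline X)=a\,\rang(X)+\int_a^b\rang(X_\lambda)\,d\lambda,\qquad \widehat{\deg}(\overline Y)=a\,\rang(Y)+\int_a^b\rang(Y_\lambda)\,d\lambda.
\]
Subtracting these identities and using $\rang(X)=\rang(Y)$ (since $f$ is an isomorphism) together with $\rang(X_\lambda)\le\rang(Y_\lambda)$ yields
\[
\widehat{\deg}(\overline Y)-\widehat{\deg}(\overline X)=\int_a^b\bigl(\rang(Y_\lambda)-\rang(X_\lambda)\bigr)\,d\lambda\ge 0,
\]
and dividing by the common rank gives $\widehat{\mu}(\overline X)\le\widehat{\mu}(\overline Y)$.
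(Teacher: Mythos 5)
Your overall route is the one the paper takes: apply Theorem~\ref{Thm:compatibilite avec la filtration HN arith} to get compatibility of $f$ with the Harder--Narasimhan filtrations, deduce the pointwise comparison $\rang(X_\lambda)\le\rang(Y_\lambda)$, and conclude by an Abel/integration-by-parts computation. The final computation is correct (you are in fact more explicit than the paper about using $\rang(X)=\rang(Y)$). The problem is exactly the step you flag as delicate. To apply axiom ({\bf Ex}\ref{Axiom: ex composition ep mon implie un epi mono}) to $F_\lambda$ you must first know that $F_\lambda$ admits a cokernel, and your justification --- that the hypothesis ``every epimorphism in $\mathcal C$ has a kernel'' supplies ``enough abelian-like structure to produce this cokernel'' --- is not an argument: kernels of epimorphisms say nothing about cokernels of an arbitrary morphism such as $F_\lambda$. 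So, as written, the crucial inequality $\rang(X_\lambda)\le\rang(Y_\lambda)$ (which the paper itself asserts without proof after invoking compatibility) is not established, and this is a genuine gap.

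The claim is true under the stated hypothesis, but it needs a different argument. One clean repair stays on the quotient side, where the hypothesis applies directly: let $q'\colon Y\twoheadrightarrow Y/f(X_\lambda)$ be the cokernel of the admissible monomorphism $fU_\lambda$ and $q\colon Y\twoheadrightarrow Y/Y_\lambda$ the cokernel of $V_\lambda$. Since $q\circ fU_\lambda=qV_\lambda F_\lambda=0$, the morphism $q$ factors as $q=g q'$ with $g\colon Y/f(X_\lambda)\rightarrow Y/Y_\lambda$; as $q$ is an epimorphism, so is $g$, hence $g$ has a kernel by hypothesis, and the epimorphism half of ({\bf Ex}\ref{Axiom: ex composition ep mon implie un epi mono}) (with $gq'=q$ an admissible epimorphism) shows $g$ is an admissible epimorphism. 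Additivity of $\rang$ on $\mathcal E$ then gives $\rang\bigl(Y/f(X_\lambda)\bigr)\ge\rang(Y/Y_\lambda)$, and comparing with $[Y]=[X_\lambda]+[Y/f(X_\lambda)]=[Y_\lambda]+[Y/Y_\lambda]$ yields $\rang(X_\lambda)\le\rang(Y_\lambda)$. (Alternatively, note that the hypothesis implies weak idempotent completeness --- retractions are epimorphisms, hence have kernels --- and in a weakly idempotent complete exact category any morphism whose post-composition with some morphism is an admissible monomorphism is itself an admissible monomorphism; but that is a nontrivial fact in its own right, not a direct application of ({\bf Ex}\ref{Axiom: ex composition ep mon implie un epi mono}).) With either repair, the rest of your proof goes through and coincides with the paper's argument.
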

\begin{proof}
Let $(X_\lambda)_{\lambda\in\mathbb R}$ and
$(Y_\lambda)_{\lambda\in\mathbb R}$ be the
Harder-Narasimhan filtrations of $\overline X$
and of $\overline Y$ respectively. Theorem
\ref{Thm:compatibilite avec la filtration HN
arith} implies that $f$ is compatible with
filtrations. Hence
$\rang(X_\lambda)\le\rang(Y_\lambda)$ for any
$\lambda\in\mathbb R$. Therefore, by taking
an interval $[-M,M]$ containing
$\supp(\nu_{\overline
X})\cup\supp(\nu_{\overline Y})$, we obtain
\[\begin{split}
\widehat{\mu}(\overline
X)&=\int_{-M}^Mt\;\mathrm{d}\nu_{\overline X}
(t)=-\int_{-M}^Mt\;\mathrm{d}\rang(X_t)=\Big[-t\rang(X_t)
\Big]_{-M}^M+\int_{-M}^M\rang(X_t)\mathrm{d}t\\
&\le M\rang(X_M)+\int_{-M}^M\rang(Y_t)\mathrm{d}t=
M\rang(Y_M)+\int_{-M}^M\rang(Y_t)\mathrm{d}t=\widehat{\mu}(\overline
Y ).
\end{split}\]
\end{proof}

Let $\mathcal D$ be an Abelian category and
$\rang$ be a rank function on $\mathcal D$.
It is interesting to calculate explicitly the
Harder-Narasimhan filtration of an object $Y$
in $\mathcal D$, equipped with an $\mathbb
R$-filtration $\mathcal
F=(Y_\lambda)_{\lambda\in\mathbb R}$ which is
separated, exhaustive, left continuous and of
finite length. Let
$U=\{\lambda_1>\cdots>\lambda_n\}$ be the
minimal jumping set of the filtration
$\mathcal F$, then
\[\xymatrix{0\ar[r]&Y_{\lambda_1}\ar[r]&
Y_{\lambda_2}\ar[r]&\cdots\ar[r]&Y_{\lambda_n}=Y}\]
is the Harder-Narasimhan sequence of
$\overline Y=(Y,[\mathcal F])$. Therefore,
the Harder-Narasimhan filtration of
$\overline Y$ is just the filtration
$\mathcal F$ itself. So we have
\[P_{\overline Y}'(t)=\sum_{i=1}^n\lambda_i\indic_{[t_{i-1},t_i[}\]
where $t_0=0$, and for any $1\le i\le n$,
$t_i=\rang(Y_{\lambda_i})/\rang(Y)$. Furthermore,
\[\nu_{\overline Y}=\sum_{i=1}^n(t_i-t_{i-1})\delta_{\lambda_i}.\]

Let $F:\mathcal C\rightarrow\mathcal D$ be an
exact functor from $\mathcal C$ to an Abelian
category $\mathcal D$. The functor $F$
induces a functor $\widehat F:\mathcal
C_A\rightarrow\mathcal D$ which sends an
arithmetic object $\overline X$ to $F(X)$, it
also induces a homomorphism of groups
$K_0(F):K_0(\mathcal C,\mathcal
E,A)\rightarrow K_0(\mathcal D)$. Since $F$
is exact, it sends monomorphisms to
monomorphisms, therefore it induces a functor
$\widetilde F:\mathbf{Fil}^{\mathbb
R,\mathrm{self}}(\mathcal
C)\rightarrow\mathbf{Fil}^{\mathbb
R,\mathrm{self}}(\mathcal D)$. If $\overline
X$ is an arithmetic object of $(\mathcal
C,\mathcal E,A)$, then $\widetilde
F(\HN(\overline X))$ is an $\mathbb
R$-filtration of $F(X)$. The following
proposition shows that we can recover the
Harder-Narasimhan polygon and the
Harder-Narasimhan measure of $\overline X$
from the filtration $\widetilde
F(\HN(\overline X))$.

\begin{proposition}
\label{Pro:le polygone sur la fibre generique
egale au polygone initial} Suppose given a
rank function $\rang$ on $K_0(\mathcal D)$
(which defines a Harder-Narasimhan category
structure on $\mathcal D$) such that the
functor $F$ preserves rank functions (i.e.
$\rang(F(X))=\rang(X)$ for any
$X\in\obj\mathcal C$). Then for any
arithmetic object $\overline X$ in $\mathcal
C_A$, the normalized Harder-Narasimhan
polygon of the filtration
$\overline{F(X)}=(F(X),[\widetilde
F(\HN(\overline X))])$ coincides with that of
$\overline X$, and the Harder-Narasimhan
measure of $\overline{F(X)}$ coincides with
that of $\overline X$.
\end{proposition}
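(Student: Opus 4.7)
The plan is to identify the Harder-Narasimhan data of $\overline{F(X)}$ directly, using the explicit description given in the paragraph just before the proposition for the Harder-Narasimhan category of self-filtered objects in an Abelian category, and reduce everything to the identification of minimal jumping sets.

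First I would write down the Harder-Narasimhan sequence
\[0 = X_0^{\HN} \rightarrowtail X_1^{\HN} \rightarrowtail \cdots \rightarrowtail X_n^{\HN} = X\]
of $\overline X$, with successive slopes $\lambda_1 > \lambda_2 > \cdots > \lambda_n$ and $t_i = \rang(X_i^{\HN})/\rang(X)$. By Proposition~\ref{Pro:Construction de la filtration de Harder-Narasimhan arith}, the filtration $\HN(\overline X) = (X_\lambda)_{\lambda \in \mathbb R}$ satisfies $X_\lambda = X_{i_{\overline X}(\lambda)}^{\HN}$; in particular $\{\lambda_1,\ldots,\lambda_n\}$ is its minimal jumping set.

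Next I would transport this filtration via $\widetilde F$. Since $F$ is exact, it sends admissible monomorphisms to monomorphisms, so $\widetilde F(\HN(\overline X)) = (F(X_\lambda))_{\lambda \in \mathbb R}$ is genuinely an $\mathbb R$-filtration of $F(X)$; the general properties recorded at the end of section~2 guarantee that it is separated, exhaustive, left continuous and of finite length. The key point is that $\{\lambda_1,\ldots,\lambda_n\}$ remains the minimal jumping set of $\widetilde F(\HN(\overline X))$: the successive subquotients $X_i^{\HN}/X_{i-1}^{\HN}$ being non-zero forces $\rang(X_i^{\HN}) > \rang(X_{i-1}^{\HN})$, and the rank-preservation hypothesis then gives $\rang(F(X_i^{\HN})) > \rang(F(X_{i-1}^{\HN}))$, so each monomorphism $F(X_{i-1}^{\HN}) \to F(X_i^{\HN})$ is a strict one. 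Conversely, outside $\{\lambda_1,\ldots,\lambda_n\}$ the original filtration is locally constant, and this property passes through $F$.

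Finally, by the explicit description given just before the proposition for objects of the Harder-Narasimhan category of self-filtrations on $\mathcal D$, the Harder-Narasimhan sequence of $\overline{F(X)}$ is read off the minimal jumping set of the underlying filtration: it is
\[0 \rightarrowtail F(X_1^{\HN}) \rightarrowtail \cdots \rightarrowtail F(X_n^{\HN}) = F(X),\]
with successive slopes exactly $\lambda_1 > \cdots > \lambda_n$. Using rank preservation once more, $\rang(F(X_i^{\HN}))/\rang(F(X)) = t_i$. Since both the normalized polygon and the Harder-Narasimhan measure are determined by the pairs $(\lambda_i,t_i)_{i=1}^n$ through the explicit formulas
\[P_{\overline Y}'(t) = \sum_{i=1}^n \lambda_i \indic_{[t_{i-1},t_i[}(t), \qquad \nu_{\overline Y} = \sum_{i=1}^n (t_i - t_{i-1}) \delta_{\lambda_i},\]
the polygons and measures of $\overline X$ and $\overline{F(X)}$ coincide.

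The only delicate point is the second step: one needs both exactness of $F$ (to produce monomorphisms in $\mathcal D$) \emph{and} the rank-preservation hypothesis (to ensure that these monomorphisms remain strict so that the jumping set does not collapse). Without rank preservation, $F$ could a priori identify two consecutive terms and destroy one of the jumps, which would falsify the conclusion.
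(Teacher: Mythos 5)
Your proof is correct and follows essentially the same route as the paper's: both identify the Harder-Narasimhan filtration of $\overline{F(X)}$ with $\widetilde F(\HN(\overline X))$ via the explicit description of HN data for self-filtered objects in $\mathcal D$ given just before the proposition, and then use rank preservation to match the data $(\lambda_i,t_i)$ determining the polygon and the measure. Your explicit check that the jumps do not collapse under $F$ is a detail the paper leaves implicit (it is absorbed there in the identity of the rank functions $t\mapsto\rang(X_t)$ and $t\mapsto\rang(F(X_t))$), but it is the same underlying argument.
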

\begin{proof}
Since the Harder-Narasimhan filtration of
$\overline{F(X)}$ coincides with $\widetilde
F(\HN(X))$, the function $t\longmapsto
-\rang(\HN(\overline X)(t))$ identifies with
$t\longmapsto -\rang(\widetilde
F(\HN(\overline X))(t))$. Therefore
$\nu_{\overline{F(X)}}=\nu_{\overline X}$ and
hence $P_{\overline{F(X)}}=P_{\overline X}$.
\end{proof}

Let $(\mathcal C,\mathcal E,A)$ be an
arithmetic exact category, $\widehat{\deg}$
be a degree function on $(\mathcal C,\mathcal
E,A)$ and $\rang$ be a rank function on
$(\mathcal C,\mathcal E)$. If $(\mathcal
C,\mathcal E)$ is an Abelian category, then
the axioms for $(\mathcal C,\mathcal
E,A,\widehat{\deg},\rang)$ to be a
Harder-Narasimhan category can be
considerably simplified. We shall show this
fact in Proposition \ref{Pro:HN for Abelian
category}.

\begin{proposition}\label{Pro:HN for Abelian
category} Supose that $(\mathcal C,\mathcal E)$ is an Abelian category. Then $(\mathcal C,\mathcal
E,A,\widehat{\deg},\rang)$ is a
Harder-Narasimhan category if the following
conditions are satisfied:
\begin{enumerate}[1)]
\item for any non-zero arithmetic object
$\overline X$, there exists a non-zero
arithmetic subobject $\overline Z$ of
$\overline X$ such that
\begin{equation}\label{Equ:pentem aximal}\widehat{\mu}(\overline Z)=\sup\{ \widehat{\mu}(\overline
Y)\;|\; \overline Y \text{ is a non-zero
arithmetic subobject of }\overline X
\};\end{equation}
\item for any non-zero object $X$ in $\mathcal C$ and
for any two arithmetic structures $h_X$ and
$h_X'$ on $X$, if $\Id_X:(X,h_X)\rightarrow
(X,h_X')$ is compatible with arithmetic
structures, then
$\widehat{\mu}(X,h_X)\le\widehat{\mu}(X,h_X')$.
\end{enumerate}
\end{proposition}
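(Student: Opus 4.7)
My plan is to verify the two axioms (HN1) and (HN2) one at a time. Throughout I will use that in an Abelian category every morphism has a kernel --- making Lemma \ref{Lem:compose un epi est compatible implie compatible} available --- and that axiom (A6) converts the relevant cartesian squares into identities between induced and quotient arithmetic structures.

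For (HN1), let $\overline X = (X,h)$ be a non-zero arithmetic object and denote by $\mu_{\max}$ the supremum slope produced by condition 1). Among all non-zero slope-maximal arithmetic subobjects I choose $(X_{\des}, h_{\des})$ of \emph{maximal rank}, which is possible because their ranks lie in $\{1,\dots,\rang(X)\}$. Given another slope-maximal arithmetic subobject $(X_0, h_0)$, form the sum $X_0 + X_{\des}$ inside $X$ equipped with the induced structure $h_+$, and consider the short exact sequence
\[0 \to (X_{\des}, h_{\des}) \to (X_0 + X_{\des}, h_+) \xrightarrow{\pi} (Q, h_Q) \to 0\]
in $\mathcal E_A$, where $Q = (X_0 + X_{\des})/X_{\des}$ and $h_Q = \pi_* h_+$. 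The key step is the inequality $\widehat\mu(Q, h_Q) \ge \mu_{\max}$. To establish it, note first that $X_0$ is itself semistable of slope $\mu_{\max}$ (its subobjects are subobjects of $X$), so Proposition \ref{Pro:propriete preliminaire de degre}(3) gives $\widehat\mu(Q_0, (\pi_0)_* h_0) \ge \mu_{\max}$ where $Q_0 = X_0/(X_0 \cap X_{\des})$ and $\pi_0:X_0\twoheadrightarrow Q_0$ is the canonical epi; the Noether isomorphism $\varphi: Q_0 \to Q$ satisfies $\varphi_* (\pi_0)_* h_0 = (\pi j)_* h_0$ by axiom (A3), hence yields an isomorphism in $\mathcal C_A$ between $(Q_0, (\pi_0)_*h_0)$ and $(Q, (\pi j)_* h_0)$, giving $\widehat\mu(Q, (\pi j)_* h_0) \ge \mu_{\max}$. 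On the other hand, since $\pi j = \pi \circ j$ is a composition of two arithmetic-structure-compatible morphisms, $\pi j : (X_0, h_0) \to (Q, h_Q)$ is itself compatible, so Lemma \ref{Lem:compose un epi est compatible implie compatible} applied to the canonical admissible epi $\pi j$ yields compatibility of $\Id_Q : (Q, (\pi j)_* h_0) \to (Q, h_Q)$; condition 2) then delivers $\widehat\mu(Q, (\pi j)_* h_0) \le \widehat\mu(Q, h_Q)$, completing the chain. The additivity of $\widehat\deg$ on the exact sequence now forces $\widehat\mu(X_0 + X_{\des}) \ge \mu_{\max}$; combined with the reverse inequality (subobject of $X$), one has $\widehat\mu(X_0 + X_{\des}) = \mu_{\max}$, and the maximality of the rank of $X_{\des}$ forces $X_0 + X_{\des} = X_{\des}$. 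This gives the desired admissible mono $X_0 \hookrightarrow X_{\des}$, and axiom (A2) yields $f^* h_{\des} = h_0$.

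For (HN2), let $(X_1, h_1), (X_2, h_2)$ be semistable with $\widehat\mu(\overline X_1) > \widehat\mu(\overline X_2)$ and let $f: X_1 \to X_2$ be a non-zero morphism compatible with arithmetic structures. Factor $f$ through its image $Z = \Image(f)$ as $X_1 \xrightarrow{g} Z \xrightarrow{\iota} X_2$ and equip $Z$ with the two natural arithmetic structures $h_Z^{(1)} := g_* h_1$ and $h_Z^{(2)} := \iota^* h_2$. Semistability and Proposition \ref{Pro:propriete preliminaire de degre}(3) give $\widehat\mu(X_1, h_1) \le \widehat\mu(Z, h_Z^{(1)})$ and $\widehat\mu(Z, h_Z^{(2)}) \le \widehat\mu(X_2, h_2)$, so it suffices to prove $\widehat\mu(Z, h_Z^{(1)}) \le \widehat\mu(Z, h_Z^{(2)})$; by condition 2) this reduces to the compatibility of $\Id_Z: (Z, h_Z^{(1)}) \to (Z, h_Z^{(2)})$. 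Taking a decomposition $X_1 \hookrightarrow W \twoheadrightarrow X_2$ witnessing compatibility of $f$, I form the pullback $W_Z := W \times_{X_2} Z$, a subobject of $W$ fitting into $X_1 \hookrightarrow W_Z \twoheadrightarrow Z$ (the right map an epi as the Abelian-category pullback of an epi along a mono). Axiom (A6) applied to the cartesian square with corners $W_Z, W, Z, X_2$ identifies the quotient structure on $Z$ induced from $W_Z$ with $h_Z^{(2)}$, showing $g: (X_1, h_1) \to (Z, h_Z^{(2)})$ is compatible. Lemma \ref{Lem:compose un epi est compatible implie compatible} applied to the canonical admissible epi $g$ then yields compatibility of $\Id_Z$, producing the required contradiction.

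The main obstacle will be the arithmetic-structure bookkeeping in (HN1): two \emph{a priori} different quotient structures $h_Q$ and $(\pi j)_* h_0$ live on the same object $Q$, and comparing them requires precisely the interplay of axiom (A3), Lemma \ref{Lem:compose un epi est compatible implie compatible}, and condition 2). Without condition 2) this comparison fails at the step $\widehat\mu(Q, (\pi j)_* h_0) \le \widehat\mu(Q, h_Q)$, explaining its essential role in the hypotheses.
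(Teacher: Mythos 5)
Your proof is correct. On ({\bf HN}\ref{Axiom:HNA comparaison de pente pour des objet semistable}) you follow the paper's line --- factor $f$ through its image $Z$ and compare the two structures $g_*h_1$ and $\iota^*h_2$ on $Z$ via condition 2) --- the only variation being that you obtain the compatibility of $\Id_Z$ directly, by pulling the witnessing decomposition $X_1\rightarrowtail W\twoheadrightarrow X_2$ back along $\iota$ and applying axiom ({\bf A}\ref{Axiome:A carre cartesien}) together with Lemma \ref{Lem:compose un epi est compatible implie compatible}, whereas the paper also invokes the dual Lemma \ref{Lem:compose un mono est compatible implie compatible} (stated there without proof); your variant is self-contained. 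On ({\bf HN}\ref{Axiom:HNA sous fibre destabilisant}) your key step genuinely differs: the paper, after fixing $X_{\des}$ of maximal rank as you do, asserts additivity of $\widehat{\deg}$ along $0\to Z\cap X_{\des}\to Z\oplus X_{\des}\to Z+X_{\des}\to 0$, which tacitly presupposes an arithmetic structure on $Z\oplus X_{\des}$ inducing the relevant ones and uses only condition 1); you instead work with $0\to\overline X_{\des}\to\overline{X_0+X_{\des}}\to(Q,h_Q)\to 0$, which lies honestly in $\mathcal E_A$ (all structures induced from $h$ plus one quotient), and bound $\widehat{\mu}(Q,h_Q)\ge\mu_{\max}$ by comparing $(\pi j)_*h_0$ with $h_Q$ through Lemma \ref{Lem:compose un epi est compatible implie compatible} and condition 2). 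This costs an extra appeal to condition 2) but keeps every arithmetic structure explicit precisely where the paper is terse (in the Hermitian model the paper's displayed equality is in general only the inequality actually needed), so your bookkeeping is if anything more robust. Two small points to tidy: Proposition \ref{Pro:propriete preliminaire de degre} 3) concerns non-trivial quotients, so dispose separately of the degenerate cases $X_0\cap X_{\des}=0$ (where the slope comparison holds with equality) and $X_0\subseteq X_{\des}$ (where $Q=0$ and there is nothing to prove); and phrase the final step as a contradiction, since the slope of $Q$ only makes sense when $Q\neq 0$.
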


Note that the condition 1) is verified once $\{ \widehat{\mu}(\overline
Y)\;|\; \overline Y \text{ is a non-zero
arithmetic subobject of }\overline X
\}$ is a finite set, or equivalently $\{ \widehat{\deg}(\overline
Y)\;|\; \overline Y \text{ is a non-zero
arithmetic subobject of }\overline X
\}$ is a finite set for any non-zero arithmetic object $\overline X$.

The following technical lemma, which is dual
to Lemma \ref{Lem:compose un epi est
compatible implie compatible}, is useful for
the proof of Proposition \ref{Pro:HN for
Abelian category}.

\begin{lemma}\label{Lem:compose un mono est
compatible implie compatible} Let $(\mathcal
C,\mathcal E,A)$ be an arithmetic exact
category. Suppose that any monomorphism in
$\mathcal C$ has a cokernel. Let $(X,h_X)$
and $(Y,h_Y)$ be two arithmetic objects and
$f:X\rightarrow Y$ be a morphism in $\mathcal
C$. Suppose that $(Y,h_Y)$ is an arithmetic
subobject of an arithmetic object $(Z,h_Z)$
and $u:Y\rightarrow Z$ is the inclusion
morphism. Then the morphism $f$ is compatible
with arithmetic structures if and only if it
is the case for $uf$.
\end{lemma}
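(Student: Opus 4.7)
My plan is to mirror the proof of Lemma \ref{Lem:compose un epi est compatible implie compatible}, swapping fiber coproducts for fiber products and exchanging the roles of admissible monomorphisms and admissible epimorphisms throughout; the standing hypothesis ``any monomorphism has a cokernel'' will play the dual role played there by ``any epimorphism has a kernel''.

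The direction $f$ compatible $\Longrightarrow uf$ compatible is immediate, since $u$ is an admissible monomorphism with $u^*h_Z=h_Y$, hence itself compatible with arithmetic structures, and composition of compatible morphisms is compatible.

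For the converse, I start from a witness $\xymatrix{X\ar@{ >->}[r]^i&W\ar@{->>}[r]^p&Z}$ of the compatibility of $uf$, with $i^*h_W=h_X$ and $p_*h_W=h_Z$. I then form the fiber product $T=W\times_Z Y$, obtaining projections $v:T\to W$ and $q:T\to Y$; axiom ($\mathbf{Ex}$\ref{Axiom: ex epi stable par probduit}) makes $q$ an admissible epimorphism. To see that $v$ is an admissible monomorphism I argue dually to the $\Coker(i\tau)$ step of the previous lemma: letting $\sigma:Z\twoheadrightarrow Z/Y$ be the cokernel of $u$, the composite $\sigma p$ is admissible epimorphic by axiom ($\mathbf{Ex}$\ref{Axiom:ex composition invariant epi et mono}), and $T$ is canonically identified with its kernel, so $v$ is an admissible monomorphism by axiom ($\mathbf{Ex}$\ref{Axiom:ex epi et mono dans une suite exacte}). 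Since $pi=uf$, the universal property of the pullback yields a unique $j:X\to T$ with $vj=i$ and $qj=f$; the identity $vj=i$ forces $j$ to be monomorphic, so by hypothesis $j$ admits a cokernel, and axiom ($\mathbf{Ex}$\ref{Axiom: ex composition ep mon implie un epi mono}) then promotes $j$ to an admissible monomorphism.

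To finish I equip $T$ with $h_T:=v^*h_W$. Using axiom ($\mathbf{A}$\ref{Axiome:A fontorialite limage reciproque}), $j^*h_T=(vj)^*h_W=i^*h_W=h_X$; and using axiom ($\mathbf{A}$\ref{Axiome:A carre cartesien}) applied to the cartesian square formed by $p,u,q,v$, one gets $q_*h_T=q_*v^*h_W=u^*p_*h_W=u^*h_Z=h_Y$. Thus the decomposition $\xymatrix{X\ar@{ >->}[r]^j&T\ar@{->>}[r]^q&Y}$ together with the arithmetic object $(T,h_T)$ exhibits $f$ as compatible with arithmetic structures. The only real subtlety is establishing the admissibility of $v$, which hinges on identifying $T$ as the kernel of $\sigma p$ so that axiom ($\mathbf{Ex}$\ref{Axiom:ex epi et mono dans une suite exacte}) can be invoked; once this dual of the $\Coker(i\tau)$ argument is in place, the remainder is a direct bookkeeping with the arithmetic-structure axioms.
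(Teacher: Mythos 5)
Your proof is correct and is exactly the argument the paper intends: the lemma is stated there without proof as the dual of Lemma \ref{Lem:compose un epi est compatible implie compatible}, and your write-up is precisely that dualization --- pulling back along the admissible epimorphism $p$ via ({\bf Ex}\ref{Axiom: ex epi stable par probduit}), identifying the second projection with the kernel of $\sigma p$ to get its admissibility, promoting the induced monomorphism $j$ with the cokernel hypothesis and ({\bf Ex}\ref{Axiom: ex composition ep mon implie un epi mono}), and doing the arithmetic-structure bookkeeping with ({\bf A}\ref{Axiome:A fontorialite limage reciproque}) and ({\bf A}\ref{Axiome:A carre cartesien}). All steps check out; no gaps.
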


{\noindent{\it Proof of Proposition
\ref{Pro:HN for Abelian category}.\ \ \ }}
Suppose that $\overline X_{\des}$ is a
non-zero arithmetic subobject of $\overline
X$ verifying \eqref{Equ:pentem aximal}, whose
rank $r$ is maximal. Suppose that $\overline
Z$ is another non-zero arithmetic subobject
of $\overline X$ verifying \eqref{Equ:pentem
aximal}. Consider the short exact sequence
\[\xymatrix{0\ar[r]&Z\cap X_{\des}\ar[r]&Z\oplus X_{\des}
\ar[r]&Z+X_{\des}\ar[r]&0},\] where $Z\cap
X_{\des}$ is the fiber product
$Z\times_XX_{\des}$ and $Z+X_{\des}$ is the
canonical image of $Z\oplus X_{\des}$ in $X$.
Therefore,
\[\widehat{\deg}(\overline{Z\cap X_{\des}})
+\widehat{\deg}(\overline{Z+X_{\des}})
=\widehat{\deg} (\overline
Z)+\widehat{\deg}(\overline
X_{\des})=\alpha(\rang(Z)+\rang(X_{\des})),\] so
\[\begin{split}\widehat{\deg}(\overline{Z+X_{\des}})&=
\alpha(\rang(Z)+\rang(X_{\des}))
-\widehat{\deg}(\overline{Z\cap
X_{\des}})\\&\ge
\alpha(\rang(Z)+\rang(X_{\des})-
\rang(\overline{Z\cap
X_{\des}}))=\alpha\rang(\overline{Z
+X_{\des}}),\end{split}\] which means that
$\widehat{\mu}(\overline{Z+X_{\des}})=\alpha$,
and hence $\rang(Z+X_{\des})=\rang(X_{\des})$
since $\rang(X_{\des})$ is maximal. As $\rang
$ is a rank function, we obtain $Z=X_{\des}$.
Therefore, the axiom ({\bf HN}\ref{Axiom:HNA
sous fibre destabilisant}) is fulfilled.

We now verify the axiom ({\bf
HN}\ref{Axiom:HNA comparaison de pente pour
des objet semistable}). Let $\overline
X=(X,h_X)$ and $\overline Y=(Y,h_Y)$ be two
semistable arithmetic objects. Suppose that
there exists a non-zero morphism
$f:X\rightarrow Y$ which is compatible with
arithmetic objects. Let $Z$ be the image of
$f$ in $Y$, $u:Z\rightarrow Y$ be the
canonical inclusion and $\pi:X\rightarrow Z$
be the canonical projection. The fact that
$f$ is compatible with arithmetic structures
implies that the identity morphism
$\Id_Z:(Z,\pi_*h_X)\rightarrow (Z,u^*h_Y)$ is
compatible with arithmetic structures (after
Lemmas \ref{Lem:compose un epi est compatible
implie compatible} and \ref{Lem:compose un
mono est compatible implie compatible}).
Therefore, the semistability of $\overline X$
and of $\overline Y$, combining the condition
2), implies that $\widehat{\mu}(\overline
X)\le\widehat{\mu}(Z, \pi_*h_X)\le\widehat{\mu}(Z,u^*h_Y)\le\widehat{\mu}(\overline Y)$.
\endzm

\begin{corollary}
\label{Cor:several aritmmetic sructio}
Let $(\mathcal C,\mathcal E)$ be an Abelian category equipped with a rank function $\rang$, $n\ge 2$ be an integer, $(A_i)_{1\le i\le n}$ be a family of arithmetic structures on $(\mathcal C,\mathcal E)$ and $A=A_1\times\cdots\times A_n$. Suppose given for any $1\le i\le n$ a degree function $\widehat{\deg}_i$ on $(\mathcal C,\mathcal E,A_i)$ such that
\begin{enumerate}[1)]
\item $\{\widehat{\deg}_i(\overline
Y)\;|\; \overline Y \text{ is a non-zero
arithmetic subobject of }\overline X
\}$ is a finite set for any non-zero arithmetic objec $\overline X$;
\item $(\mathcal C,\mathcal E,A_i,\widehat{\deg}_i,\rang)$ is a Harder-Narasimhan category.
\end{enumerate}
Then for any $\alpha=(a_i)_{1\le i\le n}\in\mathbb R_{\ge 0}^n$, if we denote by $\widehat{\deg}_\alpha=\sum_{i=1}^n a_i\widehat{\deg}_{i}$, then $(\mathcal C,\mathcal E,A,\widehat{\deg}_\alpha,\rang)$ is a Harder-Narasimhan category.
\end{corollary}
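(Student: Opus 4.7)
The plan is to apply Proposition~\ref{Pro:HN for Abelian category} together with the remark immediately following it, which in the Abelian setting reduces the verification of being a Harder-Narasimhan category to two conditions: a finiteness property of the set of arithmetic degrees of non-zero arithmetic subobjects of any non-zero arithmetic object, and the slope inequality $\widehat{\mu}(X,h)\le\widehat{\mu}(X,h')$ whenever $\Id_X:(X,h)\rightarrow(X,h')$ is compatible with the two arithmetic structures $h,h'$.

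For the finiteness condition, I would note that an arithmetic subobject of $\overline X=(X,(h_i)_{1\le i\le n})$ with respect to $A=\prod_iA_i$ is determined by an admissible monomorphism $j:Y\hookrightarrow X$, with restricted arithmetic structure $(j^{*}h_i)_{1\le i\le n}$ given componentwise. Hence arithmetic subobjects relative to $A$ are in bijection with arithmetic subobjects relative to any individual $A_i$, and each of the $n$ functionals $\overline Y\mapsto\widehat{\deg}_i(\overline Y)$ takes, by hypothesis~1), only finitely many values on this common set. Consequently the tuple $(\widehat{\deg}_1(\overline Y),\ldots,\widehat{\deg}_n(\overline Y))$ lies in a finite subset of $\mathbb R^n$, so $\widehat{\deg}_\alpha(\overline Y)=\sum_{i=1}^na_i\widehat{\deg}_i(\overline Y)$ takes only finitely many values as $\overline Y$ varies, which is exactly the sufficient condition in the remark.

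For the slope inequality, the crucial step is to unpack compatibility with respect to the product arithmetic structure. If $\Id_X:(X,h)\rightarrow(X,h')$ is compatible in $(\mathcal C,\mathcal E,A)$, there exist an arithmetic object $(W,h_W)$ with $h_W=(h_{W,i})_{1\le i\le n}$ and an admissible decomposition $X\xrightarrow{u}W\xrightarrow{v}X$ of $\Id_X$ with $u^{*}h_W=h$ and $v_{*}h_W=h'$. Since $u^{*}$ and $v_{*}$ were defined to act componentwise on the product arithmetic structure, the equalities $u^{*}h_{W,i}=h_i$ and $v_{*}h_{W,i}=h'_i$ hold for every $i$; thus $\Id_X:(X,h_i)\rightarrow(X,h'_i)$ is compatible in each $(\mathcal C,\mathcal E,A_i)$. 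Because $\mathcal C$ is Abelian every epimorphism has a kernel, so Proposition~\ref{Pro:comparaison de slpes} applies to each Harder-Narasimhan category $(\mathcal C,\mathcal E,A_i,\widehat{\deg}_i,\rang)$ and yields $\widehat{\deg}_i(X,h_i)\le\widehat{\deg}_i(X,h'_i)$. Dividing by $\rang(X)$ and taking the $\mathbb R_{\ge 0}$-linear combination with coefficients $a_i$ gives the desired inequality $\widehat{\mu}_\alpha(X,h)\le\widehat{\mu}_\alpha(X,h')$.

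The main (and only) obstacle is verifying that compatibility with the product arithmetic structure implies componentwise compatibility; this is essentially a bookkeeping exercise of unwinding the definition of $\prod_iA_i$ and invoking the fact that the operations $(-)^{*}$ and $(-)_{*}$ for the product are defined componentwise. Once this is in place, both axioms of Proposition~\ref{Pro:HN for Abelian category} follow directly from the corresponding properties of the individual Harder-Narasimhan categories, combined with the linearity of $\widehat{\deg}_\alpha$ in the $\widehat{\deg}_i$ and the non-negativity of the coefficients $a_i$.
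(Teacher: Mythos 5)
Your proof is correct and follows exactly the route the paper intends: the corollary is stated without proof as an immediate consequence of Proposition~\ref{Pro:HN for Abelian category} and the remark following it, and you verify precisely its two conditions --- finiteness of the values of $\widehat{\deg}_\alpha$ on arithmetic subobjects (componentwise from hypothesis~1)), and monotonicity of $\widehat{\mu}_\alpha$ under a compatible identity morphism, obtained by unwinding the componentwise definition of the product structure and applying Proposition~\ref{Pro:comparaison de slpes} to each factor (legitimate since $\mathcal C$ is Abelian, so epimorphisms have kernels). The only detail left tacit, which is harmless, is that $\widehat{\deg}_\alpha$ is indeed a degree function on $(\mathcal C,\mathcal E,A)$, i.e.\ that each $\widehat{\deg}_i$ factors through the natural homomorphism $K_0(\mathcal C,\mathcal E,A)\rightarrow K_0(\mathcal C,\mathcal E,A_i)$, which again follows from the componentwise definition you already invoke.
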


\section{Examples of Harder-Narasimhan categories}

\hskip\parindent In this section, we shall
give some example of Harder-Narasimhan
categories.

\subsection*{Filtrations in an extension of Abelian categories}

\hskip\parindent Let $\mathcal C$ and
$\mathcal C'$ be two Abelian categories and
$F:\mathcal C\rightarrow\mathcal C'$ be an
exact functor which sends a non-zero object
of $\mathcal C$ to a non-zero object of
$\mathcal C'$. Let $\mathcal E$ (resp.
$\mathcal E'$) be the class of all exact
sequences in $\mathcal C$ (resp. $\mathcal
C'$). Suppose given a rank function
$\rang':K_0(\mathcal C',\mathcal
E')\rightarrow\mathbb R$. Let $I$ be a
non-empty subset of $\mathbb R$, equipped
with the induced order. For any object $X$ in
$\mathcal C$, let $A(X)$ be the set of
isomorphism classes of objects in
$\mathbf{Fil}_{F(X)}^{I,\mathrm{self}}$.
Suppose that $h=[\mathcal F]$ is an element in $A(X)$.
For any monomorphism $u:X_0\rightarrow X$, we
define $u^*(h)$  to be the
class $[F(u)^*\mathcal F]\in A(X_0)$. For any epimorphism $p:X\rightarrow Y$, we define $p_*(h)$ to be
$[F(p)_*\mathcal F]\in A(Y)$. Similarly to
the the case of filtrations in an Abelian category, $(\mathcal C,\mathcal
E,A)$ is an arithmetic exact category. By
definition we know that if $\overline
X_i=(X_i,[\mathcal F_i])$ ($i=1,2$) are two
arithmetic objects, then a morphism
$f:X_1\rightarrow X_2$ in $\mathcal C$ is
compatible with arithmetic structures if and
only if $F(f)$ is compatible with filtrations
$(\mathcal F_1,\mathcal F_2)$. For any
arithmetic object $\overline X$ of $(\mathcal
C,\mathcal E,A)$, we define the arithmetic
degree of $\overline X=(X,[\mathcal F])$ to
be the real number
\[\widehat{\deg}(\overline X)=\sum_{\lambda\in I}\lambda\Big(
\rang'(\mathcal
F(\lambda))-\sup_{j>\lambda,j\in
I}\rang'(\mathcal F(j))\Big).\] Since $F$ is
an exact functor, $\widehat{\deg}$ extends
naturally to a homomorphism from
$K_0(\mathcal C,\mathcal E,A)$ to $\mathbb
R$. In the previous section we have shown
that if we define, for any object $X'$ in
$\mathcal C'$, $A'(X')$ as the set of all
isomorphism classes of objects in
$\mathbf{Fil}_{X'}^{I,\mathrm{self}}$, then
$(\mathcal C',\mathcal E',A')$ is an
arithmetic category. Furthermore, if for any
arithmetic object $\overline X'=(X',[\mathcal
F'])$, we define
\[\widehat{\deg}{}'(\overline X')=\sum_{\lambda\in I}\lambda\Big(
\rang'(\mathcal
F'(\lambda))-\sup_{j>\lambda,j\in I}
\rang'(\mathcal F'(j))\Big),\] then
$\widehat{\deg}{}'$ extends naturally to a
homomorphism $K_0(\mathcal C',\mathcal
E',A')\rightarrow\mathbb R$, and $(\mathcal
C',\mathcal E',A',\widehat{\deg}{}',\rang')$
is a Harder-Narasimhan category. Notice that
for any object $(X,[\mathcal F])$ in
$\mathcal C_A$, we have
\[\widehat{\deg}(X,[\mathcal F])=\widehat{\deg}{}'(F(X),[\mathcal F]).\]

\begin{proposition}
Denote by $\rang$ the composition
$\rang'\circ K_0(F)$. Then $(\mathcal
C,\mathcal E,A,\widehat{\deg},\rang)$ is a
Harder-Narasimhan category.
\end{proposition}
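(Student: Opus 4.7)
The plan is to invoke Proposition \ref{Pro:HN for Abelian category}, since $(\mathcal C, \mathcal E)$ is Abelian. This reduces the task to verifying two conditions, both of which I would transfer to the already-established Harder-Narasimhan category $(\mathcal C', \mathcal E', A', \widehat{\deg}{}', \rang')$ via the exact functor $F$.

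For condition 2), suppose $\Id_X : (X, h_X) \to (X, h_X')$ is compatible with arithmetic structures in $\mathcal C_A$. The definition of compatibility in this subsection (via the functor $F$) immediately implies that $\Id_{F(X)}$ is compatible between the associated filtered objects in $\mathcal C'_{A'}$. Since $\mathcal C'$ is Abelian (so every epimorphism admits a kernel), Proposition \ref{Pro:comparaison de slpes} applies in $(\mathcal C', \mathcal E', A', \widehat{\deg}{}', \rang')$ and yields $\widehat{\mu}{}'(F(X), h_X) \le \widehat{\mu}{}'(F(X), h_X')$. The identities $\widehat{\deg} = \widehat{\deg}{}' \circ F$ and $\rang = \rang' \circ F$, both recorded just before the proposition, then transfer this to the desired slope inequality between $(X, h_X)$ and $(X, h_X')$.

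For condition 1), rather than exhibit a destabilizing subobject directly, I would invoke the remark following Proposition \ref{Pro:HN for Abelian category}: it suffices to check that the set $\{\widehat{\deg}(\overline Y) : \overline Y \text{ is a non-zero arithmetic subobject of } \overline X\}$ is finite for each non-zero $\overline X = (X, [\mathcal F])$. Writing $\{\lambda_1 > \cdots > \lambda_n\}$ for the minimal jumping set of $\mathcal F$, any arithmetic subobject $\overline Y = (Y, u^*[\mathcal F])$ has induced filtration $F(u)^*\mathcal F$ on $F(Y)$ whose term at $\lambda_i$ is the fiber product $F(Y) \cap \mathcal F(\lambda_i)$ computed in $\mathcal C'$. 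A short computation with the defining formula for $\widehat{\deg}{}'$ gives
\[\widehat{\deg}(\overline Y) = \sum_{i=1}^n \lambda_i(s_i - s_{i-1}), \qquad s_i := \rang'\bigl(F(Y) \cap \mathcal F(\lambda_i)\bigr), \quad s_0 := 0,\]
so the degree depends only on the tuple $(s_1, \ldots, s_n)$. Each entry is a non-negative integer bounded above by $\rang'(\mathcal F(\lambda_i))$, leaving only finitely many possibilities.

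The only mildly delicate point is the discreteness (integrality) of the values of $\rang'$ on subobjects of a fixed object, which is automatic under the integer-valued convention for rank functions recalled in Section~4 and in any case holds in every example of interest. Apart from that, the argument is a straightforward bookkeeping exercise, propagating the Harder-Narasimhan structure from $\mathcal C'_{A'}$ back to $\mathcal C_A$ along the exact functor $F$.
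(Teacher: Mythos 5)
Your proof is correct and takes essentially the same route as the paper's: both reduce to the two conditions of Proposition \ref{Pro:HN for Abelian category}, obtaining condition 1) from the finiteness of possible degrees of arithmetic subobjects via a jumping set of $\mathcal F$ (your count by the tuples $(s_1,\dots,s_n)$ is just a reparametrization of the paper's count by the jumps $a_i$ with $a_i\in\mathbb N$ and $a_1+\cdots+a_n\le\rang(X)$), and condition 2) by transferring compatibility through $F$ and applying Proposition \ref{Pro:comparaison de slpes} in $(\mathcal C',\mathcal E',A',\widehat{\deg}{}',\rang')$. The only point the paper makes explicit that you leave implicit is the one-line check that $\rang=\rang'\circ K_0(F)$ is indeed a rank function, which holds because $F$ is exact and sends non-zero objects to non-zero objects.
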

\begin{proof} Since $F$ is an exact functor
which sends non-zero objects to non-zero
objects, the homomorphism $\rang$ is a rank
function. Let $\overline X=(X,[\mathcal F])$
be a non-zero arithmetic object in $\mathcal
C_A$. First we show that
$S:=\big\{\widehat{\deg}(\overline
Y)\;|\;\overline Y\text{ is an arithmetic
subobject of }\overline X\big\}$ is a finite
set. Let $U=\{\lambda_1,\cdots,\lambda_n\}$
be a jumping set of $\mathcal F$. If
$u:Y\rightarrow X$ is a monomorphism, then
$U$ is also a jumping set of $F(u)^*\mathcal
F$, therefore,
\[\widehat{\deg}(Y,[F(u)^*\mathcal F])\in
\Big\{\sum_{i=1}^na_i\lambda_i\;\Big|\;
\forall 1\le i\le n,\;a_i\in\mathbb N,\;0\le
a_1+\cdots+a_n\le\rang(X)\Big\}.\] The latter
is clearly a finite set.
Therefore, the condition 1) of Proposition
\ref{Pro:HN for Abelian category} is
satisfied. If $X$ is an object in $\mathcal
C$ and if $\mathcal F$ and $\mathcal G$ are
two filtrations of $F(X)$ such that
$\Id_{F(X)}=F(\Id_X)$ is compatible to
filtrations $(\mathcal F,\mathcal G)$, then
after Proposition \ref{Pro:comparaison de
slpes}, $\widehat{\deg}{}'(F(X),[\mathcal
F])\le \widehat{\deg}{}'(F(X),[\mathcal G])$
and therefore $\widehat{\mu}(X,[\mathcal
F])\le\widehat{\mu}(X,[\mathcal G])$. After
Proposition \ref{Pro:HN for Abelian
category}, $(\mathcal C,\mathcal
E,A,\widehat{\deg},\rang)$ is a
Harder-Narasimhan category.
\end{proof}

\begin{remark}
By Corollary \ref{Cor:several aritmmetic sructio}, we can easily generalize the formalism of Harder and Narasimhan to the case of bojects in $\mathcal C$ equipped with several filtrations of their images by $F$ in $\mathcal C'$.
\end{remark}

\section*{Filtered $(\varphi,N)$-modules}

\hskip\parindent Let $K$ be a field of
characteristic $0$, equipped with a discrete
valuation $v$ such that $K$ is complete for
the topology defined by $v$. Suppose that the
residue field $k$ of $K$ is of characteristic
$p>0$. Let $K_0$ be the fraction field of
Witt vector ring $W(k)$ and
$\sigma:K_0\rightarrow K_0$ be the absolute
Frobenius endomorphism. We call {\it
$(\varphi,N)$-module} (see \cite{Fontaine94}, \cite{Totaro96}, and \cite{Colmez_Fontaine00} for details) any finite dimensional
vector space $D$ over $K_0$, equipped with
\begin{enumerate}[1)]
\item a bijective $\sigma$-linear
endomorphism $\varphi:D\rightarrow D$,
\item a $K_0$-linear endomorphism $N:D\rightarrow D$
such that $N\varphi=p\varphi N$.
\end{enumerate}
Let $\mathcal C$ be
the category of all $(\varphi,N)$-modules.
It's an Abelian category. We denote by
$\mathcal E$ the class of all short exact
sequences in $\mathcal C$. There exists a
natural rank function $\rang$ on the category
$\mathcal C$ defined by the rank of vector
space over $K_0$. Furthermore, we have an
exact functor $F$ from $\mathcal C$ to the
category $\mathbf{Vec}_K$ of all finite
dimensional vector spaces over $K$, which
sends a $(\varphi,N)$-module $D$ to
$D\otimes_{K_0}K$. Consider the arithmetic
structure $A$ on $(\mathcal C,\mathcal E)$
such that, for any $(\varphi,N)$-module $D$,
$A(D)$ is the set of isomorphism classes of
$\mathbb Z$-filtrations of
$F(D)=D\otimes_{K_0}K$. Then $(\mathcal
C,\mathcal E,A)$ becomes an arithmetic exact
category. The objects in $\mathcal C_A$ are
called {\it filtered $(\varphi,N)$-modules}.

To each $(\varphi,N)$-module $D$ we associate
an integer
$\deg_{\varphi}(D)=-v(\det\varphi)$. If
$\overline D=(D,[\mathcal F])$ is a filtered
$(\varphi,N)$-module, we define
\[\deg_F(\overline D):=\sum_{i\in\mathbb Z}
i\Big(\rang_K(\mathcal F(i))-\rang_K(\mathcal
F(i+1))\Big)\qquad\text{and}\qquad
\widehat{\deg}(\overline D)=\deg_F(\overline
D)+\deg_{\varphi}(D).\] It is clear that
$\widehat{\deg}$ is a degree function on
$(\mathcal C,\mathcal E,A)$.

\begin{proposition}
$(\mathcal C,\mathcal
E,A,\widehat{\deg},\rang)$ is a
Harder-Narasimhan category.
\end{proposition}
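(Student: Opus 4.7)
The plan is to apply Proposition \ref{Pro:HN for Abelian category} to $(\mathcal C,\mathcal E,A,\widehat{\deg},\rang)$, using the remark following that proposition which reduces its condition 1 to showing that the set $\{\widehat{\deg}(\overline Y)\mid\overline Y\text{ non-zero arithmetic subobject of }\overline X\}$ is finite for every non-zero arithmetic object $\overline X$. There are thus two things to verify: this finiteness (condition 1), and the monotonicity of $\widehat{\mu}$ under identity morphisms compatible with arithmetic structures (condition 2).

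For the finiteness, write $\widehat{\deg}(\overline Y)=\deg_F(\overline Y)+\deg_\varphi(Y)$ and treat the two summands separately. If the filtration $\mathcal F$ on $F(X)=X\otimes_{K_0}K$ has jumping set contained in $\{\lambda_1,\dots,\lambda_n\}\subset\mathbb Z$, then the filtration induced on any $F(Y)$ has jumping set in the same finite subset, so $\deg_F(\overline Y)$ lies in $\bigl\{\sum_ia_i\lambda_i\bigm|a_i\in\mathbb N,\ \sum_ia_i\le\rang(X)\bigr\}$, exactly as in the preceding example on filtrations in an extension of Abelian categories. For the second summand, the classical theory of isocrystals shows that $\deg_\varphi(Y)$ takes only finitely many values as $Y$ ranges over sub-$(\varphi,N)$-modules of $X$: by Dieudonn\'e--Manin, $X\otimes_{K_0}\overline{K_0}$ decomposes as a finite direct sum $\bigoplus_\lambda D_\lambda^{n_\lambda}$ of simple isoclinic isocrystals (pairwise $\Hom$-orthogonal), so any sub-$\varphi$-module is again of the form $\bigoplus_\lambda D_\lambda^{m_\lambda}$ with $0\le m_\lambda\le n_\lambda$, and consequently $\deg_\varphi(Y)=\sum_\lambda m_\lambda\deg_\varphi(D_\lambda)$ takes only finitely many integer values.

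For condition 2, let $h,h'$ be two arithmetic structures on a $(\varphi,N)$-module $D$ such that $\Id_D\colon(D,h)\to(D,h')$ is compatible with arithmetic structures. By the description of morphisms in the extension example, $F(\Id_D)=\Id_{F(D)}$ is compatible with the corresponding filtrations $\mathcal F,\mathcal F'$ of $F(D)$ in $\mathbf{Vec}_K$, which forces $\mathcal F(\lambda)\subset\mathcal F'(\lambda)$ for every $\lambda\in\mathbb Z$. An Abel-summation computation (or Proposition \ref{Pro:comparaison de slpes} applied to the Harder-Narasimhan category of $\mathbb Z$-filtered $K$-vector spaces) then gives $\deg_F(D,h)\le\deg_F(D,h')$. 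Since $\deg_\varphi(D)$ depends only on the underlying $(\varphi,N)$-module, we conclude $\widehat{\deg}(D,h)\le\widehat{\deg}(D,h')$, and therefore $\widehat{\mu}(D,h)\le\widehat{\mu}(D,h')$.

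The only non-formal ingredient is the finiteness of $\deg_\varphi$ on sub-$(\varphi,N)$-modules; this is the main obstacle in that it is the sole arithmetic input, resting on the Dieudonn\'e--Manin classification of isocrystals over $\overline{K_0}$. Everything else is a direct application of the categorical formalism of Proposition \ref{Pro:HN for Abelian category} together with the two reference examples (filtrations in an extension of Abelian categories and Proposition \ref{Pro:comparaison de slpes}).
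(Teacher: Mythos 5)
Your proof is correct and follows essentially the same route as the paper: verify the two conditions of Proposition \ref{Pro:HN for Abelian category}, obtaining finiteness of the degrees of arithmetic subobjects by splitting $\widehat{\deg}=\deg_F+\deg_\varphi$ (the filtration part exactly as in the preceding example, the Frobenius part via the isoclinic/Dieudonn\'e--Manin decomposition, which is precisely the paper's appeal to ``the isoclinic decomposition''), and condition 2 from the monotonicity of $\deg_F$ established in the preceding example together with the invariance of $\deg_\varphi$. The only cosmetic difference is that you spell out the Dieudonn\'e--Manin argument (where the base change should be to the fraction field of $W(\bar k)$ rather than to $\overline{K_0}$), whereas the paper simply cites the isoclinic decomposition.
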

\begin{proof}
Let $\overline X=(X,[\mathcal F])$ be a
non-zero filtered $(\varphi,N)$-module. We
have shown in the previous example that
$S_F=\{\deg_F(\overline Y)\;|\; \overline
Y\text{ is an arithmetic subobject of
}\overline X\}$ is a finite set. By the
isoclinic decomposition we obtain that
$S_\varphi=\{\deg_\varphi(Y)\;|\; Y\text{ is
a subobject of } X\}$ is also a finite set.
Therefore,
\[\widetilde{S}=\{\widehat{\mu}(\overline Y)\;|\;
\overline Y\text{ is an arithmetic subobject
of }\overline X\}\] is a finite set, and
hence the condition 1) of Proposition
\ref{Pro:HN for Abelian category} is
verified.

Suppose that $X$ is a $(\varphi,N)$-module
and $\mathcal F$ and $\mathcal G$ are two
$\mathbb Z$-filtrations of $X$ such that
$\Id_X$ is compatible with filtrations
$(\mathcal F,\mathcal G)$. We have shown in
the previous example that $\deg_F(X,\mathcal
F)\le\deg_F(X,\mathcal G)$. Hence
$\widehat{\deg}(X,\mathcal
F)\le\widehat{\deg}(X,\mathcal G)$.
Therefore, the condition 2) of Proposition
\ref{Pro:HN for Abelian category} is
verified, and hence $(\mathcal C,\mathcal
E,A,\widehat{\deg},\rang)$ is a
Harder-Narasimhan category.
\end{proof}

Note that semistable filtered
$(\varphi,N)$-modules having slope $0$ are
nothing but admissible filtered
$(\varphi,N)$-modules. In classical
literature, such filtered
$(\varphi,N)$-modules are said to be weakly
admissible. In fact, Colmez and Fontaine
\cite{Colmez_Fontaine00} have proved that all
weakly admissible $(\varphi,N)$-modules are
admissible, which had been a conjecture of
Fontaine.

\subsection*{Torsion free sheaves on a polarized projective variety}

\hskip\parindent Let $X$ be a geometrically
normal projective variety of dimension $d\ge
1$ over a field $K$ and $L$ be an ample
invertible $\mathcal O_X$-module. We denote
by $\mathbf{TF}(X)$ the category of torsion
free coherent sheaves on $X$. Notice that if
$\xymatrix{0\ar[r]&E'\ar[r]&E\ar[r]&E''\ar[r]&0}$
is an exact sequence of coherent $\mathcal
O_X$-modules such that $E'$ and $E''$ are
torsion free, then also is $E$. Therefore,
$\mathbf{TF}(X)$ is an exact sub-category of
the Abelian category of all coherent
$\mathcal O_X$-modules on $X$. Let $\mathcal
E$ be the class of all exact sequences in
$\mathbf{TF}(X)$ and let $A$ be the trivial
arithmetic structure on it. If $E$ is a
torsion free coherent $\mathcal O_K$-module,
we denote by $\rang(E)$ its rank and by
$\deg(E)$ the intersection number
$c_1(L)^{d-1}c_1(E)$. The mapping $\deg$
(resp. $\rang$) extends naturally to a
homomorphism from $K_0(\mathbf{TF}(X))$ to
$\mathbb R$ (resp. $\mathbb Z$). A classical
result \cite{Maruyama81} (see also
\cite{Shatz77}) shows that
$(\mathbf{TF}(X),\mathcal E,A,\deg,\rang)$ is
in fact a Harder-Narasimhan category.

\subsection*{Hermitian vector bundles on the spectrum of
an algebraic integer ring}

\hskip\parindent Let $K$ be a number field
and $\mathcal O_K$ be its integer ring. We
denote by $\mathbf{Pro}(\mathcal O_K)$ the
category of all projective $\mathcal
O_K$-modules of finite type. Let $\mathcal E$
be the family of short exact sequences of
projective $\mathcal O_K$-modules of finite
type. Then $(\mathbf{Pro}(\mathcal
O_K),\mathcal E)$ is an exact category.

We denote by $\Sigma_f$ the set of all finite
places of $K$ which identifies with the set of
closed points of $\Spec\mathcal O_K$. If
$\mathfrak p$ is an element in $\Sigma_f$, we
denote by $v_{\mathfrak
p}:K^\times\rightarrow\mathbb Z$ the
valuation associated to $\mathfrak p$ which
sends a non-zero element $a\in\mathcal O_K$
to the length of the Artinian local ring
$\mathcal O_{K,\mathfrak p}/a\mathcal
O_{K,\mathfrak p}$. Let $\mathbb F_{\mathfrak
p }:=\mathcal O_{K,\mathfrak p}/\mathfrak
p\mathcal O_{K,\mathfrak p}$ be the residue
field and $N_{\mathfrak p}$ be its cardinal.
We denote by $|\cdot|_{\mathfrak p}$ the
absolute value on $K$ such that
$|x|_{\mathfrak p}=N_{\mathfrak
p}^{-v_{\mathfrak p}(x)}$ for any $x\in
K^\times$. Let $\Sigma_\infty$ be the set of
all embeddings of $K$ in $\mathbb C$, whose
cardinal is $[K:\mathbb Q]$. For any
$\sigma\in\Sigma_\infty$, let
$|\cdot|_\sigma:K\rightarrow\mathbb R_{\ge
0}$ be the Archimedian absolute value such
that $|x|_\sigma=|\sigma(x)|$. The complex
conjugation defines an involution
$\sigma\mapsto\overline{\sigma}$ on
$\Sigma_\infty$. The product formula asserts
that for any $x\in K^\times$, $|x|_{\mathfrak
p }=1$ for almost all finite places
$\mathfrak p$, and we have
\[\prod_{\mathfrak p\in\Sigma_f}|x|_{\mathfrak p}
\prod_{\sigma\in\Sigma_\infty}|x|_{\sigma}=1.\]

Notice that a Hermitian vector bundle over
$\Spec\mathcal O_K$ is nothing other than a
pair $\overline E=
(E,(\|\cdot\|_\sigma)_{\sigma\in\Sigma_\infty})$,
where $E$ is a projective $\mathcal
O_K$-module of finite type $E$, and for any
$\sigma\in\Sigma_\infty$,
$\|\cdot\|_{\sigma}$ is a Hermitian metric on
$E\otimes_{\mathcal O_K,\sigma}\mathbb C$
such that $\|x\otimes
z\|_\sigma=\|x\otimes\overline{
z}\|_{\overline{\sigma}}$. The rank of the
Hermitian vector bundle $\overline E$ is just
defined to be that of $E$. The rank function
on $\mathbf{Pro}(\mathcal O_K)$ extends
naturally to a homomorphism from
$K_0(\mathbf{Pro}(\mathcal O_K))$ to $\mathbb
Z $. If $\overline E$ is a Hermitian vector
bundle of rank $r$, the (normalized) Arakelov
degree of $\overline E$ is by definition
\[\widehat{\deg}_n\overline E=\frac{1}{[K:\mathbb Q]}
\Big(\log\#(E/\mathcal O_Ks_1+\cdots+\mathcal
O_Ks_r )-\frac
12\sum_{\sigma\in\Sigma_\infty}\log\det(
\left<s_i,s_j\right>_\sigma)\Big),\] where
$(s_1,\cdots,s_r)\in E^r$ is an arbitrary
element in $E^r$ which defines a basis of
$E_K$ over $K$. This definition doesn't
depend on the choice of $(s_1,\cdots,s_r)$.
For more details, see
\cite{Bost2001} and \cite{Chambert}.

If for any projective $\mathcal O_K$-module
of finite type $E$, we denote by $A(E)$ the
set of all Hermitian structures on $E$, then
$(\mathbf{Pro}(\mathcal O_K),\mathcal E,A)$
is an arithmetic exact category, as we have
shown in the previous section. The category
$\mathbf{Pro}(\mathcal O_K)_A$ is the
category of all Hermitian vector bundles over
$\Spec\mathcal O_K$ and all homomorphism of
$\mathcal O_K$-modules having norm $\le 1$ at
every $\sigma\in\Sigma_\infty$. Furthermore,
if $\xymatrix{0\ar[r]&\overline
E'\ar[r]&\overline E\ar[r]&\overline
E''\ar[r]&0}$ is a sequence in $\mathcal
E_A$, then we have the equality
$\widehat{\deg}_n(\overline
E)=\widehat{\deg}_n(\overline
E')+\widehat{\deg}_n(\overline E'')$.
Therefore, $\widehat{\deg}_n$ extends to a
homomorphism from $K_0(\mathbf{Pro}(\mathcal
O_K),\mathcal E,A)$ to $\mathbb R$. The
results of Stuhler \cite{Stuhler76} and
Grayson \cite{Grayson76} show that
$(\mathbf{Pro}(\mathcal O_K),\mathcal E,A,
\widehat{\deg}_n,\rang)$ is a
Harder-Narasimhan category.

A recent work of Moriwaki \cite{Moriwaki06} generalizes the notion of semistability and Harder-Narasimhan flag to Hermitian torsion free coherent sheaves on normal arithmetic varieties. His appoach may also be adapted into the framework of Harder-Narasimhan categories.

\bibliography{chen}
\bibliographystyle{plain}
\vspace{1cm}
CMLS Ecole Polytechnique, Palaiseau 91120,
France (huayi.chen@polytechnique.org)
\end{document}